\def\today{\ifcase\month\or
  January\or February\or March\or April\or May\or June\or
  July\or August\or September\or October\or November\or December\fi
  \space\number\day, \number\year}
\DeclareMathOperator{\sgn}{\mathrm{sgn}}
 \newtheorem{theorem}{Theorem}
 \newtheorem{lemma}[theorem]{Lemma}
 \newtheorem{proposition}[theorem]{Proposition}
 \newtheorem{corollary}[theorem]{Corollary}
 \theoremstyle{definition}
 \theoremstyle{remark}
\newcommand{\Tau}{\mathcal{T}}
\newcommand{\PW}{\text{\rm PW}}
 \newcommand{\ft}{\widehat}
 \newcommand{\mc}{\mathcal}
 \newcommand{\B}{\mc{B}}
 \newcommand{\LL}{\mc{L}}
 \newcommand{\U}{\mc{U}}
 \newcommand{\C}{\mathbb{C}}
 \newcommand{\R}{\mathbb{R}}
 \newcommand{\N}{\mathbb{N}}
 \newcommand{\Z}{\mathbb{Z}}
 \newcommand{\p}{\varphi}
 \newcommand{\ds}{\text{\rm d}s}
 \newcommand{\dt}{\text{\rm d}t}
 \newcommand{\dx}{\text{\rm d}x}
    \renewcommand{\d}{\text{\rm d}}
\newcommand{\Rep}{\textrm{Re}}
\newcommand{\ov}{\overline}
\renewcommand{\H}{\mc{H}}
\newcommand{\im}{{\rm Im}\,}
\newcommand{\hf}{\tfrac{1}{2}}
\newcommand{\<}{<\hspace{-1.5 mm}<}
\begin{document}

\title[]{Interpolation Formulas with Derivatives in de Branges Spaces}
\author[Gon\c{c}alves]{Felipe Gon\c{c}alves}
\date{\today}
\subjclass[2010]{46E22, 30D10, 41A05, 41A30, 33C10}
\keywords{De Branges spaces, Hilbert Spaces of Entire Functions, Exponential Type, Interpolation Formulas, Bessel Functions, Homogeneous Spaces, Extremal Functions}
\address{IMPA - Instituto de Matem\'{a}tica Pura e Aplicada - Estrada Dona Castorina, 110, Rio de Janeiro, RJ, Brazil 22460-320}
\email{ffgoncalves@impa.br}

\allowdisplaybreaks
\numberwithin{equation}{section}

\begin{abstract}
The purpose of this paper is to prove an interpolation formula involving derivatives for entire functions of exponential type. 
We extend the interpolation formula derived  by J. Vaaler in \cite[Theorem 9]{V} to general $L^p$ de Branges spaces.
We extensively use techniques from de Branges' theory of Hilbert spaces of entire functions as developed in \cite{B2}, but a crucial passage involves the Hilbert--type inequalities as derived in \cite{CLV}.
We give applications to homogeneous spaces of entire functions that involve Bessel functions and we prove a uniqueness result for extremal one-sided band-limited approximations of radial functions in Euclidean spaces.
\end{abstract}

\maketitle

\section{Introduction}

\subsection{Background}
An entire function $F: \C \to \C$, not identically zero, is said to be of {\it exponential type} if
\begin{equation*}
\tau(F) = \limsup_{|z| \to \infty} |z|^{-1}\, \log |F(z)| < \infty.
\end{equation*}
In this case, the non-negative number $\tau(F)$ is called the exponential type of $F$.\smallskip

In \cite[Theorem 9]{V}, J. Vaaler proved that if $F(z)$ is an entire function of exponential type at most $2\pi$ that belongs to $L^p(\R,\dx)$ for some $p\in(0,\infty)$ then
\begin{equation}\label{vaaler-int-form}
{F(z)}=\frac{\sin^2(\pi z)}{\pi^2}\sum_{n\in\Z}\bigg\{\frac{F(n)}{(z-n)^2} + \frac{F'(n)}{(z-n)}\bigg\},
\end{equation}
\noindent where the sum converges uniformly on compact sets of $\C$.
Furthermore, in the case $p=2$, it can be proven using Paley-Wiener spaces techniques that the formula also converges in the $L^2(\R,\dx)$-norm. Also, a similar formula holds if we substitute the integers by any translation of them.

\smallskip

Given a number $\tau>0$ and $p\in(0,\infty]$ the classical Paley-Wiener space $\PW(\tau,p)$ is defined as the space of entire functions $F(z)$ of exponential type at most $\tau$ that belong to $L^p(\R,\dx)$. In the case $p=2$ this is a Hilbert space with the standard $L^2(\R,\dx)$-inner product and it can be proven that convergence in the space implies uniform convergence on compact sets of $\C$. Based on the Hilbert space setting, the natural environment to extend the interpolation formula (\ref{vaaler-int-form}) would be the {\it de Branges spaces} of entire functions as developed by L. de Branges in \cite{B2}, since they generalize the Paley-Wiener spaces.\smallskip

Intuitively, a de Branges space can be seen as a weighted Paley-Wiener space. Given a Hermite-Biehler function $E(z)$ (see the definition in \S\ref{deBranges}) and a number $p\in(0,\infty]$, the space $\H^p(E)$ is a space of entire functions $F(z)$ that satisfies a certain growth condition relatively to $E(z)$ and such that $F/E$ belongs to $L^p(\R,\dx)$.\smallskip

Formula (\ref{vaaler-int-form}) is useful in applications to approximation theory. In \cite{GV}, S. Graham and J. Vaaler used this formula to construct extremal one-sided approximations of exponential type to a given real-valued function $g(x)$. Under certain restrictions on $g(x)$, they characterized the pair of entire functions $M(z)$ and $L(z)$ of exponential type at most $2\pi$ that satisfies $L(x)\leq g(x) \leq M(x)$ for all real $x$ minimizing the quantities 
\begin{equation*}
\int_\R \{M(x)-g(x)\}\,\dx \,\,\,\, \mbox{ and } \,\,\,\, \int_\R\{g(x)-L(x)\}\,\dx.
\end{equation*}
\indent In  \cite{CLV}, E. Carneiro, F. Littmann and J. Vaaler applied the same methods to produce extremal one-sided band-limited approximations for functions $g(x)$ that are in some sense subordinated to the Gaussian function. Later in \cite{GMK}, F. Gon\c{c}alves, M. Kelly and J. Madrid extended their results to the several variables regime. Other important works that apply such interpolation formulas are \cite{CL, CV2, V}.

\smallskip

If, instead of the $L^1(\R,\dx)$-norm, one decides to minimize a weighted norm $L^1(\R,\d\mu(x))$, where $\mu(x)$ is a non-decreasing function on the real line, the Fourier transform tools are no longer available. The alternative theory to approach these new extremal problems is the theory of de Branges spaces. Several works have been done in this direction, see \cite{CCLM,CG,CL3,CL4,HV,L2,LSp}. The methods used in these later works were very different than the previous ones, since generalizations of the formula (\ref{vaaler-int-form}) to de Branges spaces were not known at the time. These special functions $M(z)$ and $L(z)$ have been used in a variety of interesting applications in number theory and analysis, for instance in connection to: large sieve inequalities \cite{HV, V}, Erd\"{o}s-Tur\'{a}n inequalities \cite{CV2,V}, Hilbert-type inequalities \cite{CL3, CLV, CV2, GV, L3, V}, Tauberian theorems \cite{GV} and bounds in the theory of the Riemann zeta-function and general $L$-functions \cite{CC,CCLM,CCM,CCM2, CS,Ga,GG}.

\smallskip

\subsection{De Branges Spaces}\label{deBranges}

In order to properly state our results we need to briefly review the main concepts and terminology of the theory of $L^p$ de Branges spaces (see \cite{Bar2,B2}).  \smallskip

Throughout the text we denote by 
\begin{equation*}
\U = \{z \in \C; \, \im(z) >0\}
\end{equation*}
the open upper half-plane. An analytic function $F: \U \to \C$ has {\it bounded type} if it can be written as a quotient of two functions that are analytic and bounded in $\U$ (or equivalently, if $\log |F(z)|$ admits a positive harmonic majorant in $\U$). If $F: \U \to \C$ is not identically zero and has bounded type, from its Nevanlinna factorization \cite[Theorems 9 and 10]{B2}, the number 
\begin{equation*}
v(F) = \limsup_{y \to \infty} \,y^{-1} \,\log |F(iy)|,
\end{equation*}
called the {\it mean type} of $F$, is finite. 
It can be proven that the set of functions of bounded type in $\U$ is an algebra and 
\begin{equation}\label{mean-type-relations}
v(FG)=v(F)+v(G) \,\,\,\, \mbox{ and } \,\,\,\,\, v(F+G)\leq \max\{v(F),v(G)\},
\end{equation}
if $F(z)$ and $G(z)$ are of bounded type  in $\U$ (see \cite[Problem 29]{B2}).

\smallskip

If $E: \C \to \C$ is entire, we define the entire function $E^*: \C \to \C$ by $E^*(z) = \ov{E(\ov{z})}$. A {\it Hermite-Biehler} function $E: \C \to \C$ is an entire function that satisfies the basic inequality
\begin{equation*}
|E^*(z)| < |E(z)|
\end{equation*}
for all $z \in \U$. 
Associated to $E(z)$, we define the companion functions
\begin{equation*}\label{companion-func}
A(z) := \frac12 \big\{E(z) + E^*(z)\big\} \ \ \ {\rm and}  \ \ \ B(z) := \frac{i}{2}\big\{E(z) - E^*(z)\big\}.
\end{equation*}
\noindent Note that $A(z)$ and $B(z)$ are real entire functions with only real zeros and  $E(z) = A(z) - iB(z)$.
Similarly, if $\alpha$ is a real number, we write 
\begin{equation}\label{companion-func-alpha}
e^{i\alpha}E(z)=A_\alpha(z)-iB_\alpha(z)
\end{equation}
where $A_\alpha(z)$ and $B_\alpha(z)$ are real entire functions. Note that $B_{\alpha-\pi/2}(z)=A_\alpha(z)$.\smallskip

We denote by $\p(z)$ the phase function associated to $E(z)$. This function is defined by the
condition $e^{i\p(x)}E(x)\in\R$ for all real $x$. It can be shown that $\p(z)$ is analytic on a neighborhood of $\R$, any two of such functions differ by an integer multiple of $\pi$, and $\p'(t)>0$ for all real $t$ (see \cite[Problem 48]{B2} and \cite{HV}). For a given real number $\alpha$ we define
\begin{equation*}\label{tau-alpha}
\Tau(\alpha)=\{x\in\R:\p(x)\equiv \alpha \, (\!\!\! \!\!\! \mod \pi)\}
\end{equation*}
and we note that $\Tau(\alpha)$ is the set of all real zeros of $B_\alpha(z)/E(z)$.\smallskip

If $E(z)$ is a Hermite-Biehler function and $p\in(0,\infty]$, we define the $L^p$ {\it de Branges space} $\H^p(E)$ as the space of entire functions $F:\C \to \C$ such that $F/E$ and $F^*/E$ have bounded type in $\U$ with non-positive mean type and 
\begin{equation*}\label{Intro_Def_norm_E}
\|F\|_{E,p}=\bigg(\int_\R|F(x)/E(x)|^p\dx\bigg)^{1/p} <\infty 
\end{equation*}
if $p$ is finite, and 
\begin{equation*}\label{Intro_Def_norm_E_infty}
\|F\|_{E,\infty}=\sup_{x\in\R} |F(x)/E(x)| <\infty 
\end{equation*}
if $p=\infty$. When $p\geq 1$ these are Banach spaces (see Section \ref{Lp-form}) and when $p=2$ (we write $\H(E)=\H^2(E)$ and $\|\cdot\|_{E,2}=\|\cdot\|_E$) this forms a Hilbert space with inner product given by
\begin{equation*}
\langle F, G \rangle_E = \int_{-\infty}^{\infty} F(x)\, \ov{G(x)}\, |E(x)|^{-2}\,\dx.
\end{equation*}

The remarkable property about these spaces is that, for each $w \in \C$, the evaluation map $F \mapsto F(w)$ is a continuous linear functional. It can be shown, using Cauchy's formula for the upper half-plane (see \cite[Theorems 12 and 19]{B2}), that the function
\begin{align}\label{reprod-kern}
K(w,z)  = \frac{E(z)E^*(\ov{w}) - E^*(z)E(\ov{w})}{2\pi i (\ov{w}-z)} = \frac{B(z)A(\ov{w}) - A(z)B(\ov{w})}{\pi (z - \ov{w})}
\end{align}
is a {\it reproducing kernel} for these spaces. That is, for any $w\in\C$ and any $p\in[1,\infty)$ the function $K(w,\cdot)$ belongs to $\H^{p'}(E)$, where $1/p+1/p'=1$, and 
\begin{equation}\label{repord-kern-id}
F(w)=\langle F, K(w,\cdot) \rangle_E = \int_{-\infty}^{\infty} F(x)\, \ov{K(w,x)}\, |E(x)|^{-2}\,\dx,
\end{equation}
for each $F \in \H^p(E)$. Note that, by Cauchy-Schwarz inequality, we obtain
\begin{equation}\label{swartz-ineq}
|F(w)|\leq \|F\|_{E,p}\|K(w,\cdot)\|_{E,p'}.
\end{equation}
It can be shown that $w\mapsto \|K(w,\cdot)\|_{E,p'}$ is continuous, hence we see that convergence in the space implies uniform convergence on compact sets of $\C$.\smallskip

From the reproducing kernel property we have
\begin{equation*}
K(w,w) = \langle K(w, \cdot),  K(w, \cdot) \rangle_E =\|K(w,\cdot)\|_E\geq 0\,,
\end{equation*}
and one can easily show that $K(w,w) =0$ if and only if $w \in \R$ and $E(w) =0$ (see for instance \cite[Lemma 11]{HV} or \cite[Problem 45]{B2}).


For a given $a>0$, we define the Paley-Wiener space $\PW(a,p)=\H^p(e^{-iaz})$. By Krein's theorem (see \cite{Kr} and \cite[Lemma 12]{HV}) this space coincides with the space of entire functions $F(z)$ of exponential type at most $a$ such that $F\in L^p(\R,\dx)$.
\smallskip

In Section \ref{Lp-form} we give a different approach for defining the spaces $\H^p(E)$ connecting with the theory of Hardy spaces in the upper half-plane. Also in Section \ref{Lp-form} we comment about the proof of completeness of these spaces.\smallskip

\subsection{Main Results}

We say that a de Branges space $\H^p(E)$ is closed by differentiation if  $F'\in\H^p(E)$ whenever $F\in\H^p(E)$. By (\ref{swartz-ineq}) we conclude that for $p\in[1,\infty)$ convergence in the space implies uniform convergence on compacts sets of $\C$, hence the differentiation operator is always a closed operator. Thus, by the Closed Graph Theorem, it is continuous whenever it is everywhere defined. 

Recall that we omit the superscript $p$ in $\H^p(E)$ only when $p=2$, that is, we write $\H(E)=\H^2(E)$. The crucial idea for the main result of the paper is to proof an interpolation formula with derivatives for functions in the space $\H(E^2)$, not in $\H(E)$. As in the Vaaler's proof, the natural space for the correct interpolation formula was $PW(2\pi,2)=\H([e^{-i\pi z}]^2)$. Also note that $E(z)^2=A(z)^2-B(z)^2-2iA(z)B(z)$, thus the condition $AB\notin \H(E^2)$ will be necessary for the main result (see formula \eqref{norm-basis}).

The following theorem is the main result of the paper.

\begin{theorem}\label{gen-int-form-der}
 Let $E(z)$ be a Hermite-Biehler function such that $\H(E^2)$ is a de Branges space closed by differentiation. Suppose that for a real number $\alpha$ we have $A_\alpha B_\alpha \notin\H(E^2)$ and  $\p'(x)$ is bounded away from zero over $\Tau(\alpha)$.
Then, if $p\in[1,2]$ and $F\in\H^p(E^2)$, we have
 \begin{equation}\label{gen-int-form}
  F(z)=B_\alpha(z)^2\sum_{t\in\Tau(\alpha)}\bigg\{\frac{F(t)}{B_\alpha'(t)^2(z-t)^2}+ 
\frac{F'(t)B_\alpha'(t)-F(t)B_\alpha''(t)}{B_\alpha'(t)^3(z-t)}  \bigg\}\,,
 \end{equation}
\noindent where the sum converges uniformly on compact sets of $\C$.
This formula is also valid for $p\in(2,\infty)$ if we additionally assume that $v(E^*/E)<0$.
\end{theorem}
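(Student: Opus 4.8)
The plan is to first establish the formula in the Hilbert-space case $p=2$ via an orthogonal-basis argument, and then extend it to $p \neq 2$ by an approximation/duality scheme using the reproducing-kernel bound \eqref{swartz-ineq}. For the $p=2$ case, the key point is that the hypothesis ``$\H(E^2)$ is closed by differentiation'' together with $A_\alpha B_\alpha \notin \H(E^2)$ should force $B_\alpha^2/E^2$ and $A_\alpha B_\alpha / E^2$ to have non-positive mean type while $B_\alpha^2 \notin \H(E^2)$ (since $B_\alpha^2 = \tfrac14(e^{i\alpha}E - e^{-i\alpha}E^*)^2$ and its difference from a scalar multiple of $E^2$ involves $A_\alpha B_\alpha$). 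One then builds the interpolation basis: for each $t \in \Tau(\alpha)$, the functions
\[
L_t(z) = \frac{B_\alpha(z)^2}{B_\alpha'(t)^2 (z-t)^2}
\qquad\text{and}\qquad
M_t(z) = \frac{B_\alpha(z)^2}{B_\alpha'(t)^2 (z-t)} - \frac{B_\alpha''(t)}{B_\alpha'(t)}\,L_t(z)
\]
(after suitable normalization) should form a Riesz-type basis of $\H(E^2)$ adapted to the evaluation functionals $F \mapsto F(t)$ and $F \mapsto F'(t)$. Here is where ``$\p'$ bounded away from zero over $\Tau(\alpha)$'' enters: it controls $\p'(t) = K(t,t)\pi/|E(t)|^2$-type quantities and hence the norms of these basis elements uniformly, ensuring the Gram matrix is bounded above and below. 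The identity $B_\alpha^2(z) = \sum_t \{L_t(z) \cdot (\text{val}) + M_t(z)\cdot(\text{deriv})\}$ applied to $F$ then follows once one checks the two-point reproducing property and completeness; the completeness of this system is where I expect to invoke the Hilbert-type inequalities from \cite{CLV} as the excerpt hints, to rule out a nontrivial orthogonal complement.

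For the passage from $p=2$ to general $p$, I would argue as follows. First treat $p \in [1,2)$: given $F \in \H^p(E^2)$, approximate it by functions in $\H^p(E^2) \cap \H(E^2)$ — for instance by multiplying by a fixed normalized element of a suitable Paley--Wiener space and rescaling, or by the standard trick $F_\varepsilon(z) = F(z)\,G(\varepsilon z)$ with $G \in \H(E^2)$, $G(0)=1$ — so that $F_\varepsilon \to F$ uniformly on compacta and in $L^p$, while $F_\varepsilon \in \H(E^2)$ for each $\varepsilon>0$. Apply the $p=2$ formula to $F_\varepsilon$, then pass to the limit term by term; the uniform-on-compacta convergence of the right-hand side is controlled by estimating the tail using $|F(t)|, |F'(t)| \lesssim \|F\|_{E^2,p}\,\|K_{E^2}(t,\cdot)\|_{p'}$ together with summability of $\sum_t \|K_{E^2}(t,\cdot)\|_{p'}^{?}/(1+|t|^2)$, which again relies on $\p'$ being bounded below (so that consecutive points of $\Tau(\alpha)$ are not too close and the kernel norms grow polynomially). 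For $p \in (2,\infty)$ the extra hypothesis $v(E^*/E) < 0$ is presumably used to guarantee that $\H^p(E^2) \subseteq \H^{2}(E^2)$ (or at least that $F \in \H^p(E^2)$ still has $F(t), F'(t)$ controlled and the interpolation sum makes sense), i.e. a nesting/embedding of de Branges $L^p$-spaces, after which the same approximation argument applies.

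The main obstacle, I expect, is the completeness of the two-point interpolation system in $\H(E^2)$ — equivalently, showing that a function in $\H(E^2)$ vanishing to second order at every $t \in \Tau(\alpha)$ must be zero. Naively such a function would be divisible by $B_\alpha^2$, giving $F/B_\alpha^2$ entire; one then wants to conclude $F/B_\alpha^2$ is a bounded entire function of mean type zero, hence constant, and the constant must vanish because $B_\alpha^2 \notin \H(E^2)$. Making the growth/mean-type bookkeeping rigorous — in particular verifying that $F/B_\alpha^2$ and its conjugate have bounded type and non-positive mean type in $\U$, and that the $L^2$ condition on $F/E^2$ transfers appropriately — is delicate, and this is exactly the point at which the Hilbert-type inequalities of \cite{CLV} (giving a two-sided comparison between $\|F\|_{E^2}^2$ and $\sum_t \{|F(t)|^2 + |F'(t)|^2\}$-type quantities, up to weights $\p'(t)^{-1}$) are the natural tool. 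A secondary technical nuisance will be justifying all the interchanges of sum and limit in the $p\neq 2$ step with uniformity on compact sets, but that should be routine given the kernel estimates.
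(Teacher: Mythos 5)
Your $p=2$ skeleton (verify the formula on a dense family, prove a norm inequality for finite sums via the Hilbert--type inequalities of \cite{CLV}, then pass to the limit) is the right strategy and matches the paper's Steps 2--4. But the way you propose to get density/completeness has a gap. The Hilbert--type inequalities only control the Gram matrix of the system $\{L_t,M_t\}$ from above and below on \emph{finite} linear combinations; they say nothing about completeness. Your fallback, that a function vanishing to second order on $\Tau(\alpha)$ is divisible by $B_\alpha^2$ and hence a constant multiple of it, is a \emph{uniqueness} statement for the interpolation data, not completeness of the span of $\{L_t,M_t\}$, and even as a uniqueness argument the claim that $F/B_\alpha^2$ is ``bounded of mean type zero, hence constant'' needs the bound $|F(x)|\lesssim |B_\alpha(x)|^2$ on $\R$, which does not follow from $F/E^2\in L^2$. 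The paper sidesteps all of this: since $A_\alpha B_\alpha\notin\H(E^2)$, de Branges' Theorem 22 already gives that the reproducing kernels $A_\alpha B_\alpha/(z-t)$, $t\in\Tau(\alpha)\cup\Tau(\alpha-\pi/2)$, form an orthogonal \emph{basis} of $\H(E^2)$; the formula is checked exactly on these basis elements via explicit partial-fraction identities for $B_\alpha/A_\alpha$, and the norm inequality (where separation of $\Tau(\alpha)$, supplied by closedness under differentiation, feeds into \cite[Corollary 22]{CLV}) then lets one pass to arbitrary $F$.

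The more serious problems are in the passage to $p\neq 2$. For $p\in[1,2)$ your truncation $F_\varepsilon(z)=F(z)G(\varepsilon z)$ generally leaves the space (the product has too large exponential type / wrong growth relative to $E^2$), and no approximation is needed anyway: boundedness of $\p'$ (a consequence of closedness under differentiation) gives the continuous embedding $\H^p(E^2)\subset\H^2(E^2)$ for $p\in[1,2]$, so the $p=2$ case applies verbatim. For $p\in(2,\infty)$ your plan collapses at the first step: the inclusion goes the \emph{other} way, $\H^2(E^2)\subset\H^p(E^2)$, so $F\in\H^p(E^2)$ need not lie in $\H(E^2)$ and the $p=2$ formula cannot be applied to $F$ directly. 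The paper instead applies the $p=2$ formula to the difference quotient $G_w(z)=[F(z)A_\alpha(w)^2-A_\alpha(z)^2F(w)]/(z-w)\in\H(E^2)$, which yields the identity only up to an additive term $c(F)A_\alpha(z)^2$; killing $c(F)$ requires (i) absolute convergence of the interpolation series for $F\in\H^p(E^2)$, which is exactly where $v(E^*/E)<0$ is used (to build a multiplier from homogeneous-space functions $E_\nu$ mapping $\H^p(E^2)$ into $\H(E^2)$ and to invoke Baranov's theorem that $\H^p(E^2)$ is closed by differentiation), and (ii) a Hahn--Banach density argument showing $c(\cdot)$ is a bounded functional vanishing on a dense set. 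None of this is captured by your sketch, so the $p>2$ case as proposed would fail.
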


\noindent {\it Remark:} We note that there exists at most one $\alpha$ modulo $\pi/2$ such that $A_\alpha B_\alpha \in\H(E^2)$ otherwise $E^2(z)$ would belong to $\H(E^2)$, which is an absurd. In the paper \cite{Bar}, A. Baranov proved that if $E'/E$ belongs to the Hardy space $H^\infty(\U)$ (see Section \ref{Lp-form}) then the differentiation operator is continuous in  $\H(E)$. He also concluded that this condition is necessary if we assume $v(E^*/E)<0$ (see also \cite{Bar2}).

\smallskip

We highlight the fact that Vaaler's proof of \eqref{vaaler-int-form} in \cite{V} relies heavily on Fourier analysis, a tool that is not available in this general setting. Thus, our main challenge here (and motivation to consider this problem) is two-fold: (i) to find a Fourier analysis-free proof of \eqref{vaaler-int-form}; (ii) to extend this proof to the general setting. This is carried out in Sections \ref{L2-form} and \ref{Lp-form}.

\smallskip

We present here a corollary of this result related to sampling theory.

\begin{corollary}\label{cor}
Let $E(z)=A(z)-iB(z)$ be an Hermite-Biehler function such that $\PW(a,2)=\H(E^2)$ as sets. Suppose that for some constant $M>0$, $|A(t)|\leq M$ whenever $B(t)=0$. Then there exists a constant $C>0$ such that
\begin{equation}\label{sampling}
C^{-1}\int_\R|F(t)|^2\dt \leq  \sum_{B(t)=0} \{|F(t)|^2+|F'(t)|^2\} \leq C\int_\R|F(t)|^2\dt
\end{equation}
for every $F\in\PW(a,2)$. Furthermore, if $\{t_n\}_{n\in\N}$ is an enumeration of the real zeros of $B(z)$ then for every pair $(p_n)\in l^2(\N)$ and $(q_n)\in l^2(\N)$ of complex sequences there exists an unique function $F\in\PW(a,2)$ such that $F(t_n)=p_n$ and $F'(t_n)=q_n$ for all $n$.
\end{corollary}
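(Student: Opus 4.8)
The plan is to derive everything from Theorem \ref{gen-int-form-der} applied with $\alpha=0$, so that $\Tau(0)$ is exactly the zero set of $B$ and $B_0=B$, $A_0=A$. First I would check that the hypotheses of the theorem hold. Since $\PW(a,2)=\H(E^2)$ as sets, the two norms are equivalent by the closed graph theorem (both are de Branges/Hilbert spaces with continuous evaluations), and $\PW(a,2)$ is closed by differentiation (differentiation is bounded on $\PW(a,2)$, e.g. by Bernstein's inequality), so $\H(E^2)$ is closed by differentiation. The condition $|A(t)|\le M$ on the zeros of $B$ together with $E=A-iB$ gives, on $\Tau(0)$, that $|E(t)|=|A(t)|\le M$; combined with $\p'=|E|^{-2}(A'B-AB')\cdot(\text{something})$—more precisely the standard identity $\p'(t)|E(t)|^2 = B'(t)A(t)-A'(t)B(t)$, which at a zero of $B$ reads $\p'(t)|E(t)|^2 = B'(t)A(t)$—I would argue $\p'$ is bounded away from zero on $\Tau(0)$: since $|E(t)|\le M$ it suffices to show $|B'(t)A(t)|$ is bounded below, and this follows because $\p'$ is the derivative of the phase of the Paley--Wiener-type function $E$ and, $\H(E^2)=\PW(a,2)$ forcing $E$ to be comparable to $e^{-i(a/2)z}$ in an appropriate sense, $\p'$ is comparable to a positive constant $a/2$ on $\R$ and in particular on $\Tau(0)$. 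Finally $AB\notin\H(E^2)$: if $AB\in\H(E^2)$ then since $E^2=A^2-B^2-2iAB$ and $A^2-B^2=\Rep(E^2)$ has the same growth as $E^2$, one would get $E^2\in\H(E^2)$, which is impossible as a nonzero de Branges space never contains its own structure function $E^2$ (its norm diverges).

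With the theorem in force, formula \eqref{gen-int-form} with $\alpha=0$ becomes, writing $\{t_n\}$ for the zeros of $B$,
\begin{equation*}
F(z)=B(z)^2\sum_{n}\bigg\{\frac{F(t_n)}{B'(t_n)^2(z-t_n)^2}+\frac{F'(t_n)B'(t_n)-F(t_n)B''(t_n)}{B'(t_n)^3(z-t_n)}\bigg\}.
\end{equation*}
The associated "interpolation basis" functions are $G_n(z)=B(z)^2/(B'(t_n)^2(z-t_n)^2)$ (reproducing the value at $t_n$) and $H_n(z)=B(z)^2/(B'(t_n)^3(z-t_n))$ (reproducing the derivative). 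The key computation, which I expect the paper already records in or near formula \eqref{norm-basis}, is that $\{G_n,H_n\}$ form a Riesz basis of $\H(E^2)$: one shows $\langle G_n,G_m\rangle=\langle H_n,H_m\rangle=0$ for $n\ne m$ and that $\langle G_n,H_m\rangle$ is suitably controlled, with $\|G_n\|^2$ and $\|H_n\|^2$ bounded above and below uniformly in $n$ — this is exactly where $\p'$ bounded away from zero (uniform lower bound on $B'(t_n)^2$-type quantities) and $|A(t_n)|\le M$ (uniform upper bound) enter. Granting that $\{G_n,H_n\}$ is a Riesz basis, the two-sided inequality \eqref{sampling} is just the statement that for $F=\sum_n (F(t_n) c_n G_n + F'(t_n) d_n H_n)$ — after normalizing — the norm of $F$ in $\PW(a,2)=\H(E^2)$ (equivalent to $\int_\R|F|^2$) is comparable to $\sum_n(|F(t_n)|^2+|F'(t_n)|^2)$, the Riesz-basis norm equivalence.

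For the last assertion (existence and uniqueness of an interpolant with prescribed $(p_n)\in\ell^2$ and $(q_n)\in\ell^2$): uniqueness is immediate from the left inequality in \eqref{sampling}, since if $F\in\PW(a,2)$ vanishes together with $F'$ on all $t_n$ then $\|F\|=0$. For existence, define $F$ by the series $\sum_n\{p_n \widetilde G_n + q_n \widetilde H_n\}$ where $\widetilde G_n,\widetilde H_n$ are the appropriately renormalized basis elements (so that $\widetilde G_n(t_m)=\delta_{nm}$, $\widetilde G_n'(t_m)=0$, $\widetilde H_n(t_m)=0$, $\widetilde H_n'(t_m)=\delta_{nm}$); the Riesz-basis property guarantees this converges in $\H(E^2)$ to a function $F$ with $\|F\|^2\asymp\sum(|p_n|^2+|q_n|^2)<\infty$, and since convergence in the space implies uniform convergence on compacta, the interpolation conditions $F(t_n)=p_n$, $F'(t_n)=q_n$ pass to the limit. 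I would double-check that the renormalization is consistent, i.e. that the "dual" system one must use to read off the coefficients is itself a Riesz basis — this is automatic for Riesz bases but worth stating.

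The main obstacle is the Riesz-basis claim, i.e. making rigorous that $\{G_n,H_n\}$ is not merely a complete biorthogonal-type system but has the two-sided frame bounds; this is precisely the content that forces the hypotheses $|A(t)|\le M$ on $\{B=0\}$ and $\p'$ bounded below, and it is where the Hilbert-type inequalities of \cite{CLV} invoked in the proof of Theorem \ref{gen-int-form-der} do the real work. Everything else is bookkeeping: verifying the theorem's hypotheses from $\PW(a,2)=\H(E^2)$, and translating "Riesz basis" into the two stated conclusions.
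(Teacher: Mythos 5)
Your overall strategy (verify the hypotheses of Theorem \ref{gen-int-form-der} for $\alpha=0$, then read off \eqref{sampling} and the interpolation statement from the norm estimates in its proof) is the same as the paper's, but two of your hypothesis verifications have genuine gaps. First, your argument that $\p'$ is bounded below on $\Tau(0)$ is circular: you reduce it to a lower bound on $B'(t)A(t)$ and then justify that by asserting that $\p'$ is comparable to $a/2$ on all of $\R$ because ``$E$ is comparable to $e^{-iaz/2}$.'' That assertion does not follow from the set equality $\PW(a,2)=\H(E^2)$ (the weight $|E|$ need not be bounded on $\R$), and if it were true the hypothesis $|A(t)|\le M$ would be superfluous. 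The paper's route is: by the Closed Graph Theorem the two norms are equivalent, hence the two reproducing kernels are comparable and $K_2(t,t)\simeq 1$ for all real $t$; since $|E(t)|^4\p'(t)=\tfrac{\pi}{2}K_2(t,t)$ and $|E(t)|=|A(t)|\le M$ at zeros of $B$, one gets $\p'(t)\gtrsim M^{-4}$ exactly on $\Tau(0)$ --- this is where the hypothesis enters.

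Second, and more seriously, your proof that $AB\notin\H(E^2)$ does not work. You claim $AB\in\H(E^2)$ would force $E^2\in\H(E^2)$ because $A^2-B^2$ ``has the same growth as $E^2$''; but $A^2-B^2$ vanishes wherever $|A|=|B|$ while $|E|^2=A^2+B^2$ does not, so no such comparability holds on $\R$, and the paper's own remark (there is at most \emph{one} $\alpha$ mod $\pi/2$ with $A_\alpha B_\alpha\in\H(E^2)$) shows that $A_\alpha B_\alpha\in\H(E^2)$ genuinely can occur for one $\alpha$ --- so no soft growth argument can exclude it for $\alpha=0$ specifically. The paper instead rescales to $\PW(\pi,2)$, uses Proposition \ref{diff-lemma} to see that the real zeros of $AB$ are separated, and invokes \cite[Theorem 1]{OS} to conclude that this zero set is a sampling sequence for $\PW(\pi,2)$; a function of the space vanishing on a sampling set has zero norm, so $AB\notin\H(E^2)$. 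Finally, a smaller point: the functions $P_n$ are not pairwise orthogonal (see \eqref{prod-Pn-Pm}), so your claim $\langle G_n,G_m\rangle=0$ for $n\ne m$ is false; the nonzero off-diagonal terms are exactly what the Hilbert-type inequality (Proposition \ref{hilb-type-ineq}) controls, as you correctly anticipate elsewhere in your sketch.
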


\noindent {\it Remark:} Following the ideas of J. Ortega-Cerd\`a and  K. Seip in \cite{OS}, Corollary \ref{cor} gives a sufficient condition for a sequence of points to be {\it sampling with derivates} for $\PW(a,2)$. We say that a discrete set of real points $\Lambda$ is {\it sampling with derivatives} for $\PW(a,2)$ if there exists a constant $C>0$ such that

\begin{equation*}
C^{-1}\int_\R|F(t)|^2\dt \leq  \sum_{t\in\Lambda} \{|F(t)|^2+|F'(t)|^2\} \leq C\int_\R|F(t)|^2\dt
\end{equation*}
for every $F\in\PW(a,2)$. Also, in the paper \cite{LS}, Y. Lyubarskii and K. Seip give necessary and sufficient conditions for a Hermite-Biehler function $E(z)$ to satisfy $\PW(a,2)=\H(E^2)$.

\subsection{Organization of the Paper}
In Section \ref{L2-form} we prove Theorem \ref{gen-int-form-der} for the case $p=2$ using de Branges space techniques. In Section \ref{Lp-form} we review the aspects of $L^p$ de Branges spaces and provide the full proof of Theorem \ref{gen-int-form-der}. In Part 1 of Section \ref{app} we give a quick review of homogeneous spaces and derive interpolation formulas for these spaces, which fully generalize the interpolation results derived in \cite{V}. Finally, in Part 2 of Section \ref{app} we provide a direct application of our formulas, proving a uniqueness result concerning best one-sided approximations by band-limited functions in Euclidean spaces.

\subsection{Notation Remark}
Given two positive quantities $Q$ and $Q'$ and $N$ real quantities $r_1,...,r_N$ we write 
$Q\<_{r_1,...,r_N}Q'$ when $Q\leq C(r_1,...,r_N)Q'$ where $C:\Omega\subset\R^N\to(0,\infty)$ is some positive function. We also write $Q\simeq_{r_1,...,r_N} Q'$ when both $Q\<_{r_1,...,r_N}Q'$ and $Q'\<_{r_1,...,r_N}Q$ hold. Often, the quantities $Q$ and $Q'$ will depend on a function $F$, that is 
$Q=Q(F)$ and $Q'=Q'(F)$. We write $Q(F)\<Q'(F)$ when there exists a constant $C>0$, which does not depend on $F$, such that $Q(F)\leq CQ'(F)$.

\medskip

\section{Interpolation Formulas in de Branges Spaces}\label{L2-form}

Without the Fourier transform theory  we need to use a different approach than that used by J. Vaaler in \cite{V}. The recipe to extend formula (\ref{vaaler-int-form}) is

\begin{enumerate}
 \item Substitute the function $\sin(\pi z)$ by the companion function $B_\alpha(z)$ defined in (\ref{companion-func-alpha}) associated with a Hermite-Biehler function $E(z)$.
 \item Prove that formula (\ref{gen-int-form}) is valid for a dense set of functions in $\H(E^2)$.
 \item Deduce inequalities that guarantee that the formula will remain valid when we pass to the limit. 
\end{enumerate}\smallskip

 \noindent In the last step of this recipe we shall use the Hilbert-type inequalities as derived in \cite{CLV}.

\subsection{Preliminary Results}
 Let $E(z)$ be an Hermite-Biehler function and recall that we write $\p(z)$ for the phase function. If $t$ and $\alpha$ are real numbers such that $\varphi(t)\equiv \alpha \, \,(\!\!\! \mod \pi)$ we have
 \begin{equation}\label{phi-ident}
 \p'(t)=\pi K(t,t)/|E(t)|^2=\frac{B'_\alpha(t)}{A_\alpha(t)}>0,
 \end{equation}
 where $K(w,z)$ is defined in (\ref{reprod-kern}) (see \cite[Problem 48]{B2}).
We also have 
\begin{equation*}\label{Re-A/B}
 0<\frac{|E(z)|^2-|E^*(z)|^2}{2y|B(z)|^2}=\Rep \,\, i\frac{A(z)}{B(z)}
\end{equation*} 
if $y>0$. In a similar way $\Rep [-iB(z)/A(z)]>0$ if $y>0$. 

\smallskip

Throughout the rest of this paper we will always 
denote by $\{t_n\}$ the points such that $\varphi(t_n)=\pi n$ for all $n\in\Z$ 
and $\{s_n\}$ the points such that $\varphi(s_n)=\pi/2+n\pi$ for all $n\in\Z$. These points are respectively all the real zeros of $B(z)/E(z)$ and $A(z)/E(z)$. Also these zeros are simple. 

To see this, suppose that $t_n$ is a zero of $E(z)$ of order $m\geq 0$ and of $B(z)$ of order $m+l\geq 1$. We claim that $l=1$. If $m=0$, then by \eqref{phi-ident} and \eqref{companion-func-alpha} we trivially have $l=1$. If not, then $\tilde{E}(z)=E(z)/(z-t_n)^m$ is a Hermite-Biehler function and $\tilde{E}(t_n)\neq 0$, hence by the previous argument $t_n$ is a simple zero of $B(z)/(z-t_n)^m$ and thus $l=1$. We conclude that the points $\{t_n\}$ and $\{s_n\}$ are respectively simple zeros and simple poles of $B(z)/A(z)$.
\smallskip

According to \cite[Theorem 22]{B2}, for every real number $\alpha$ the set of functions 

$$
\bigg\{\frac{B_\alpha(z)}{(z-t)}\bigg\}_{t\in\Tau(\alpha)}
$$
is an orthogonal set in $\H(E)$ and
\begin{equation}\label{norm-basis}
\|F\|_E^2 \geq \sum_{t\in\Tau(\alpha)} \frac{|F(t)|^2}{K(t,t)} = \pi\sum_{t\in\Tau(\alpha)}  \frac{|F(t)|^2}{B'_\alpha(t)A_\alpha(t)},
\end{equation}\smallskip
where equality holds if and only if $B_\alpha\notin \H(E)$. We have the following lemma.

\begin{lemma}\label{int-form-B/A}
Let $E(z)$ be a Hermite-Biehler function with no real zeros. If $A\notin\H(E)$ then

\begin{enumerate}
\item For all complex numbers $z$ and $w$ not equal to any $s_n$ we have 
\begin{equation}\label{B/A-form-0}
\frac{B(z)/A(z)-B(\overline{w})/A(\overline{w})}{\overline{w}-z}=\sum_n\frac{B(s_n)}{A'(s_n)(z-s_n)(\overline{w}-s_n)}.
\end{equation}
\item For all $s_j$ we have
\begin{equation}\label{B/A-form-1}
\frac{B(z)}{A(z)} = \frac{B'(s_j)}{A'(s_j)}- \frac{B(s_j)A''(s_j)}{2A'(s_j)^2} + \frac{B(s_j)}{A'(s_j)(z-s_j)} + \sum_{n\neq j} \frac{B(s_n)}{A'(s_n)}\bigg( \frac{1}{z-s_n} + \frac{1}{s_n-s_j}  \bigg).
\end{equation}
\item For all $t_j$ we have
\begin{equation}\label{B/A-form-2}
\frac{B(z)}{A(z)}=\sum_{n} \frac{B(s_n)}{A'(s_n)}\bigg( \frac{1}{z-s_n} + \frac{1}{s_n-t_j}  \bigg).
\end{equation}
\end{enumerate} \smallskip

\noindent These series converge uniformly on compact sets of $\C$ away from their respective singularities since the following summability condition holds
\begin{equation}\label{B/A-sum-cond}
 \sum_{n}\frac{|B(s_n)|}{|A'(s_n)|(1+s_n^2)} < \infty.
\end{equation}

\end{lemma}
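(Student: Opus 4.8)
The three identities are all incarnations of the Mittag--Leffler (partial fraction) expansion of the meromorphic function $B/A$, whose poles are exactly the simple zeros $\{s_n\}$ of $A$; the plan is to read off this expansion from the reproducing kernel of $\H(E)$ written in the orthogonal basis $\{A(z)/(z-s_n)\}_n$. Take $\alpha=-\tfrac{\pi}{2}$, so that $B_{-\pi/2}=A$ and $A_{-\pi/2}=-B$. By \cite[Theorem 22]{B2} the functions $F_n(z):=A(z)/(z-s_n)$ form an orthogonal system in $\H(E)$, and since $A=B_{-\pi/2}\notin\H(E)$ by hypothesis, the inequality \eqref{norm-basis} is an \emph{equality}; equivalently, $\{F_n\}$ is a complete orthogonal system. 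Since $F_n(s_m)=0$ for $m\neq n$ and $F_n(s_n)=A'(s_n)$, the equality case of \eqref{norm-basis} gives $\|F_n\|_E^2=-\pi A'(s_n)/B(s_n)$, which is positive by \eqref{phi-ident}.

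Next I would expand the reproducing kernel in this basis, $K(w,z)=\sum_n\|F_n\|_E^{-2}\,\overline{F_n(w)}\,F_n(z)$, use $\overline{F_n(w)}=A(\overline w)/(\overline w-s_n)$ and the closed form \eqref{reprod-kern} of $K$, and divide by $A(z)A(\overline w)$; inserting $\|F_n\|_E^{-2}=-B(s_n)/(\pi A'(s_n))$ this is precisely \eqref{B/A-form-0}, valid for all $z,w\notin\{s_n\}$. Specializing the same kernel identity (before dividing) to $z=w=i$, and using $(i-s_n)(-i-s_n)=1+s_n^2$, $A(i)\neq 0$ (as $A$ has only real zeros) and $K(i,i)>0$, yields
\begin{equation*}
\sum_n\frac{|B(s_n)|}{\pi\,|A'(s_n)|\,(1+s_n^2)}=\sum_n\frac{1}{\|F_n\|_E^2\,(1+s_n^2)}=\frac{K(i,i)}{|A(i)|^2}<\infty,
\end{equation*}
which is \eqref{B/A-sum-cond}. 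Since $|z-s_n|^{-1}$ and $|\overline w-s_n|^{-1}$ are $O\big((1+s_n^2)^{-1/2}\big)$, uniformly for $z,w$ ranging over a compact set that stays at positive distance from $\{s_n\}$, the Weierstrass $M$-test combined with \eqref{B/A-sum-cond} then gives the asserted absolute and uniform convergence of the series in \eqref{B/A-form-0}, \eqref{B/A-form-1} and \eqref{B/A-form-2} on such compacts.

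Identities \eqref{B/A-form-1} and \eqref{B/A-form-2} I would obtain by letting the free variable $u:=\overline w$ tend to $s_j$, respectively to $t_j$, in \eqref{B/A-form-0} rewritten as $B(z)/A(z)=B(u)/A(u)+(u-z)\sum_n B(s_n)/\big(A'(s_n)(z-s_n)(u-s_n)\big)$. The limit $u\to t_j$ is the easy one: $B(u)/A(u)\to B(t_j)/A(t_j)=0$ (here $A(t_j)\neq0$ because $E$ has no real zeros), and the algebraic identity $\tfrac{t_j-z}{(z-s_n)(t_j-s_n)}=\tfrac{1}{z-s_n}+\tfrac{1}{s_n-t_j}$ converts the surviving sum into \eqref{B/A-form-2}; moving the limit inside the sum is justified by dominated convergence via \eqref{B/A-sum-cond}. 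For $u\to s_j$ one first isolates the $n=j$ term, writes $\tfrac{u-z}{z-s_j}=\tfrac{u-s_j}{z-s_j}-1$, and combines it with the Laurent expansion of $B(u)/A(u)$ at $u=s_j$: the simple poles in $u$ cancel, leaving the constant $\tfrac{B'(s_j)}{A'(s_j)}-\tfrac{B(s_j)A''(s_j)}{2A'(s_j)^2}$ and the term $\tfrac{B(s_j)}{A'(s_j)(z-s_j)}$, while the part of the sum over $n\neq j$ is handled exactly as before through $\tfrac{s_j-z}{(z-s_n)(s_j-s_n)}=\tfrac{1}{z-s_n}+\tfrac{1}{s_n-s_j}$. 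This produces \eqref{B/A-form-1}.

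The conceptual content is confined to the first step: recognizing $\{A(z)/(z-s_n)\}$ as a \emph{complete} orthogonal system, which is exactly where the hypothesis $A\notin\H(E)$ enters. The only genuine technical nuisance is the $u\to s_j$ limit --- tracking which pieces carry the cancelling poles and checking that the surviving constant term is the stated one --- together with the (routine) dominated-convergence step, based on \eqref{B/A-sum-cond}, that licenses passing each limit inside the infinite sum.
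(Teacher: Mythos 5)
Your argument is correct, and for the core identity \eqref{B/A-form-0} it takes a genuinely different route from the paper's. The paper observes that $-iB/A$ has positive real part in $\U$ and invokes the Stieltjes inversion formula to obtain the partial-fraction representation up to an additive constant $p\le 0$, which is then shown to vanish using $A\notin\H(E)$ (otherwise multiplying through by $A(z)$ would place $pA$ in $\H(E)$); the summability condition \eqref{B/A-sum-cond} comes out of that same Stieltjes representation. You instead feed $A\notin\H(E)$ in through the equality case of \eqref{norm-basis}, i.e.\ the completeness of the orthogonal system $\{A(z)/(z-s_n)\}$, expand the reproducing kernel in that basis, and divide by $A(z)A(\ov w)$; the constant never appears because the kernel expansion leaves no room for one, and \eqref{B/A-sum-cond} drops out of Parseval evaluated at $z=w=i$ (where $A(i)\neq 0$ since $A$ has only real zeros). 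Both proofs rest on the same inputs (the hypothesis $A\notin\H(E)$ together with de Branges' Theorem 22), but yours is self-contained at the Hilbert-space level and makes the vanishing of the extra constant automatic, at the price of having to identify $A(\cdot)/(\cdot-s_n)$ with a multiple of $K(s_n,\cdot)$ and compute $\|F_n\|_E^2=-\pi A'(s_n)/B(s_n)$ via \eqref{phi-ident}. For parts (2) and (3) the two arguments are essentially the same in substance --- reduce to (1) and determine the additive constant by a Laurent expansion at $s_j$ --- differing only in mechanics: you pass to the limit $\ov w\to s_j$ (respectively substitute $\ov w=t_j$) inside (1), justified by dominated convergence via \eqref{B/A-sum-cond}, whereas the paper compares difference quotients of the candidate series $g$ with those of $B/A$. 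I find no gap.
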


\begin{proof}
The function $f(z)=B(z)/A(z)$ satisfies $\Rep [-if(z)]>0$ if $y>0$ with simple poles at the points $z=s_n$. By the Stieltjes inversion formula (see \cite[Problem 47 and Theorem 3]{B2}) 
the condition (\ref{B/A-sum-cond}) holds and 
there exists some non-positive number $p$ such that
$$
\frac{B(z)/A(z)-B(\overline{w})/A(\overline{w})}{\overline{w}-z}=p+\sum_n\frac{B(s_n)}{A'(s_n)(z-s_n)(\overline{w}-s_n)}.
$$
By the proof of \cite[Theorem 22]{B2}, if we multiply the last equality by $A(z)$, both sides would be functions in $\H(E)$.   Since $A\notin\H(E)$ we conclude that $p=0$ and this proves (1). 
To finish, we only prove (2) since (3) is analogous. For this, define 
$$
g(z)=\frac{B(s_j)}{A'(s_j)(z-s_j)} + \sum_{n\neq j} 
\frac{B(s_n)}{A'(s_n)}\bigg( \frac{1}{z-s_n} + \frac{1}{s_n-s_j}  \bigg)
$$
and note that
$$
\frac{g(z)-g(\ov w)}{\ov w -z}=\frac{B(z)/A(z)-B(\overline{w})/A(\overline{w})}{\overline{w}-z}.$$
Thus $g(z)$ differs from $B(z)/A(z)$ by a constant, that is
$$
g(z)+C=B(z)/A(z).
$$
We conclude that (for instance, via the Laurent expansions around $s_j$)
$$
C=\lim_{z\to s_j}\frac{B(z)-g(z)A(z)}{A'(s_j)(z-s_j)}= \frac{B'(s_j)}{A'(s_j)}- \frac{B(s_j)A''(s_j)}{2A'(s_j)^2}.
$$
\end{proof}

\noindent{\it Remark:} A similar lemma holds if we change $A(z)$ by $B(z)$ and $s_n$ by $t_n$.

\begin{lemma}\label{A/B-calculations}
Let $E(z)$ be a Hermite-Biehler function with no real zeros. If $B\notin\H(E)$ then

\begin{enumerate}
\item If $s_k\neq s_l$ we have
\begin{equation}\label{A/B-2-1-form}
\frac{A'(s_k)}{B(s_k)(s_k-s_l)}=\sum_n\frac{A(t_n)}{B'(t_n)(s_k-t_n)^2(s_l-t_n)}
\end{equation}
and
\begin{equation}\label{A/B-2-2-form}
-\frac{A'(s_k)}{B(s_k)(s_k-s_l)^2}-\frac{A'(s_l)}{B(s_l)(s_k-s_l)^2}=\sum_n\frac{A(t_n)}{B'(t_n)(s_k-t_n)^2(s_l-t_n)^2}.
\end{equation} \smallskip

\item For all $s_k$ we have



\begin{equation}\label{A/B-4-form}
-\frac{1}{6}\frac{\partial^3}{\partial z^3}\frac{A(z)}{B(z)}\bigg|_{z=s_k}=\sum_n\frac{A(t_n)}{B'(t_n)(s_k-t_n)^4}.
\end{equation}

\end{enumerate}

\end{lemma}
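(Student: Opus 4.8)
The plan is to obtain all three identities by differentiating the partial fraction expansion ``dual'' to \eqref{B/A-form-0}. The starting point will be the analogue of Lemma~\ref{int-form-B/A} recorded in the remark after it: since $B\notin\H(E)$ and $E$ has no real zeros, interchanging $A\leftrightarrow B$ and $s_n\leftrightarrow t_n$ gives
\begin{equation*}
G(z,\ov w):=\frac{A(z)/B(z)-A(\ov w)/B(\ov w)}{\ov w-z}=\sum_n\frac{A(t_n)}{B'(t_n)(z-t_n)(\ov w-t_n)},
\end{equation*}
the series converging uniformly on compact subsets of $\C\times\C$ avoiding the hyperplanes $z=t_n$ and $\ov w=t_n$, thanks to the summability condition dual to \eqref{B/A-sum-cond}. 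The structural observation driving everything is that the left-hand side of each claimed identity is a partial derivative $\partial_z^a\partial_{\ov w}^b G$ evaluated either at a point $(s_k,s_l)$ with $s_k\ne s_l$, or on the diagonal $(s_k,s_k)$, while the right-hand side of each identity is, up to sign, the corresponding termwise differentiated series evaluated at the same point. The latter matching is legitimate because the series converges locally uniformly off the $t_n$ (so it may be differentiated term by term) and the points $s_j$ are never among the $t_n$ (a common zero of $A$ and $B$ would be a zero of $E$). So the proof reduces to two routine tasks: differentiating the middle expression for $G$, and bookkeeping the signs and factorials.

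For \eqref{A/B-2-1-form} I would compute $\partial_z G=h'(z)(\ov w-z)^{-1}+\big(h(z)-h(\ov w)\big)(\ov w-z)^{-2}$, where $h:=A/B$, and evaluate at $z=s_k$, $\ov w=s_l$. Since $E$ has no real zeros, $B(s_k)\ne0$, so $h$ is analytic near each $s_k$ with $h(s_k)=A(s_k)/B(s_k)=0$ and $h'(s_k)=A'(s_k)/B(s_k)$; the vanishing $h(s_k)=h(s_l)=0$ then collapses $\partial_z G$ to $h'(s_k)/(s_l-s_k)$, which matches the termwise series up to the sign coming from $\partial_z(z-t_n)^{-1}$, giving \eqref{A/B-2-1-form}. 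For \eqref{A/B-2-2-form} I would differentiate once more, $\partial_{\ov w}\partial_z G=-\big(h'(z)+h'(\ov w)\big)(\ov w-z)^{-2}-2\big(h(z)-h(\ov w)\big)(\ov w-z)^{-3}$, and evaluate again at $(s_k,s_l)$; the vanishing at $s_k,s_l$ kills the last term and leaves exactly $-\big(h'(s_k)+h'(s_l)\big)(s_k-s_l)^{-2}$, which is the left-hand side of \eqref{A/B-2-2-form}.

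For \eqref{A/B-4-form} I would use the same mixed second derivative $\partial_{\ov w}\partial_z G$, but now let $\ov w\to z\to s_k$. The termwise series produces $\sum_n A(t_n)B'(t_n)^{-1}(s_k-t_n)^{-4}$; on the other side, the two singular pieces of $\partial_{\ov w}\partial_z G$ cancel as $\ov w\to z$, and Taylor expanding $h(\ov w)$ and $h'(\ov w)$ about $z$ gives $\lim_{\ov w\to z}\partial_{\ov w}\partial_z G=-\tfrac16 h'''(z)$, hence $-\tfrac16(A/B)'''(s_k)$ at $z=s_k$. (Equivalently, the expansion above can be written as $G(z,\ov w)=-\sum_{j\ge0}\frac{h^{(j+1)}(\ov w)}{(j+1)!}(z-\ov w)^j$, from which the coefficient bookkeeping is immediate.)

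I do not expect a serious obstacle; the only point needing a word of care is legitimizing the diagonal evaluation in \eqref{A/B-4-form}. One checks that $G$ extends analytically across $z=\ov w$ (indeed $G(z,z)=-h'(z)$, which is analytic off the $t_n$) and that the termwise differentiated series still converges locally uniformly near the diagonal, exactly as in the proof of Lemma~\ref{int-form-B/A}; term by term differentiation is then valid there. The genuine content of the lemma is thus the elementary divided-difference calculus that converts the partial derivatives of $G$ into derivatives of $A/B$ at the points $s_k$.
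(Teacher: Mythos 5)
Your proposal is correct and follows essentially the same route as the paper: the paper likewise swaps the roles of $A$ and $B$ in Lemma \ref{int-form-B/A} to get the dual kernel identity, then differentiates in $z$ (and in $\ov w$) and evaluates at $(s_k,s_l)$, respectively on the diagonal $z=\ov w=s_k$, to obtain the three formulas. Your version merely spells out the divided-difference bookkeeping and the justification of termwise differentiation that the paper leaves implicit.
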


\begin{proof}
We can change the roles of $A(z)$ and $B(z)$ in Lemma \ref{int-form-B/A} to obtain
\begin{equation}\label{equa-6}
\frac{A(z)/B(z)-A(\overline{w})/B(\overline{w})}{\overline{w}-z}=
\sum_n\frac{A(t_n)}{B'(t_n)(z-t_n)(\overline{w}-t_n)}.
\end{equation}
Thus, the first part of assertion (1) follows if we differentiate the above formula with respect to $z$ 
and evaluate at the points $z=s_k$ and $\ov w=s_l$. For the second formula in (1) we  differentiate (\ref{equa-6}) with respect to $z$ and $\ov w$ and 
then evaluate at the points $z=s_k$ and $\ov w=s_l$.
For (2) we  differentiate (\ref{equa-6}) with respect to $z$ and $\ov w$ but now we evaluate at the points $z=\ov w=s_k$.
\end{proof}

Let $E(z)$ be a Hermite-Biehler function and define for every $n$ the following auxiliary functions
\begin{equation}\label{def-Pn-Qn}
P_n(z)=\frac{A(z)^2}{(z-s_n)^2} \,\,\,\, \text{ and } \,\,\,\, Q_n(z)=\frac{A(z)^2}{(z-s_n)}.
\end{equation}
These are the interpolating functions for the formula  (\ref{gen-int-form}) if we take $\alpha=-\pi/2$.
Note that $P_n,Q_n\in \H(E^2)$ for all $n$. The next lemma computes the norms and inner products associated with these functions in the 
space $\H(E^2)$ under the assumption  $AB\notin\H(E^2)$. 
We note that we can always substitute $E(z)$ by $e^{i\alpha}E(z)=A_\alpha(z)-iB_\alpha(z)$ 
for some real number $\alpha$ such that $\H(E^2)=\H(e^{2i\alpha}E^2)$ isometrically and the new functions satisfy $A_\alpha B_\alpha\notin\H(e^{2i\alpha}E^2)$.
In fact there is at most one $\alpha$ modulo $\pi/2$ such that $A_\alpha B_\alpha\in\H(E^2)$ (see the remark after Theorem \ref{gen-int-form-der}).

\begin{lemma}\label{Pn-Qn-lemma}
Let $E(z)=A(z)-iB(z)$ be a Hermite-Biehler function with no real zeros and suppose that $AB\notin \H(E^2)$. 

Then, if $s_k\neq s_l$, we have
\begin{equation} \label{prod-Pn-Pm}
\langle  P_k,P_l \rangle_{E^2} = -\bigg( \frac{A'(s_k)}{B(s_k)} + \frac{A'(s_l)}{B(s_l)}\bigg)\frac{\pi}{2(s_k-s_l)^2}
\end{equation}
and
\begin{equation} \label{prod-Qn-Qm}
\langle Q_k,Q_l\rangle_{E^2} = 0.
\end{equation}
We also have
\begin{equation} \label{norm-Pn}
\|P_k\|^2_{E^2} = -\frac{\pi}{2}\bigg( \frac{A'(s_k)^3}{B(s_k)^3} + \frac{1}{6}\frac{\partial^3}{\partial z^3}\frac{A(z)}{B(z)}\bigg|_{z=s_k}\bigg)
\end{equation}
and
\begin{equation} \label{norm-Qn}
\|Q_k\|^2_{E^2} = -\frac{\pi}{2}\frac{A'(s_k)}{B(s_k)}.
\end{equation}

\end{lemma}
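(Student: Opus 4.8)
The plan is to apply the orthogonal--basis theorem underlying \eqref{norm-basis} directly to the de Branges space $\H(E^2)$, exploiting the fact that $E(z)^2$ is again a Hermite--Biehler function. Writing $E=A-iB$ we have $E^2=(A^2-B^2)-i\,(2AB)$, so the companion functions of $E^2$ are $A_{E^2}:=A^2-B^2$ and $B_{E^2}:=2AB$, and the phase function of $E^2$ is $2\p$. Since $E$ has no real zeros, while $A$ and $B$ have only simple real zeros and share none, the real zeros of $B_{E^2}/E^2=2AB/E^2$ are exactly the points $\{t_n\}\cup\{s_n\}$, all simple. Hence \eqref{norm-basis}, applied to $E^2$ with the parameter $\alpha=0$, tells us that $\{\,2A(z)B(z)/(z-t)\,\}_{t\in\{t_n\}\cup\{s_n\}}$ is an orthogonal set in $\H(E^2)$ and that
\[
\|F\|^2_{E^2}\ \geq\ \pi\sum_{t\in\{t_n\}\cup\{s_n\}}\frac{|F(t)|^2}{B_{E^2}'(t)\,A_{E^2}(t)},
\]
with equality precisely when $B_{E^2}=2AB\notin\H(E^2)$ --- which is exactly our standing hypothesis. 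This being a Parseval identity, polarising it (all the series converge absolutely by Cauchy--Schwarz) yields $\langle F,G\rangle_{E^2}=\pi\sum_{t}\frac{F(t)\overline{G(t)}}{B_{E^2}'(t)A_{E^2}(t)}$ for all $F,G\in\H(E^2)$, the sum over $\{t_n\}\cup\{s_n\}$.

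Next I would compute the denominators and evaluate $P_k,Q_k\in\H(E^2)$ from \eqref{def-Pn-Qn} at the nodes. From $B_{E^2}'=2(A'B+AB')$ and $A_{E^2}=A^2-B^2$, together with $B(t_n)=0$ and $A(s_k)=0$, one finds $B_{E^2}'(t_n)A_{E^2}(t_n)=2A(t_n)^3B'(t_n)$ and $B_{E^2}'(s_k)A_{E^2}(s_k)=-2A'(s_k)B(s_k)^3$ (both positive). For the interpolating functions: since $A$ has a simple zero at each $s_j$, we get $P_k(s_j)=0$ for $j\neq k$, $P_k(s_k)=A'(s_k)^2$, and --- the key point --- $Q_k(s_j)=0$ for \emph{every} $j$, because the double zero of $A^2$ absorbs the simple pole of $Q_k$ at $s_k$. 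On the other hand $P_k(t_n)=A(t_n)^2/(t_n-s_k)^2$ and $Q_k(t_n)=A(t_n)^2/(t_n-s_k)$, with $A(t_n)$ real and nonzero (a common real zero of $A$ and $B$ would be a real zero of $E$).

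Substituting these values into the Parseval identity, for $k\neq l$ all $s$-node terms in $\langle P_k,P_l\rangle_{E^2}$ and in $\langle Q_k,Q_l\rangle_{E^2}$ vanish, as do all $s$-node terms in $\|Q_k\|^2_{E^2}$, while in $\|P_k\|^2_{E^2}$ only the term $t=s_k$ survives among the $s$-nodes. Using $\frac{\pi A(t_n)^4}{2A(t_n)^3B'(t_n)}=\frac{\pi A(t_n)}{2B'(t_n)}$ and $\frac{\pi A'(s_k)^4}{-2A'(s_k)B(s_k)^3}=-\frac{\pi A'(s_k)^3}{2B(s_k)^3}$, this leaves
\[
\langle P_k,P_l\rangle_{E^2}=\frac{\pi}{2}\sum_n\frac{A(t_n)}{B'(t_n)(s_k-t_n)^2(s_l-t_n)^2},\qquad
\langle Q_k,Q_l\rangle_{E^2}=\frac{\pi}{2}\sum_n\frac{A(t_n)}{B'(t_n)(s_k-t_n)(s_l-t_n)},
\]
\[
\|P_k\|^2_{E^2}=-\frac{\pi A'(s_k)^3}{2B(s_k)^3}+\frac{\pi}{2}\sum_n\frac{A(t_n)}{B'(t_n)(s_k-t_n)^4},\qquad
\|Q_k\|^2_{E^2}=\frac{\pi}{2}\sum_n\frac{A(t_n)}{B'(t_n)(s_k-t_n)^2}.
\]
It then remains to identify the four sums over $n$ with quantities already evaluated in Lemmas \ref{int-form-B/A} and \ref{A/B-calculations}. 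The first sum is the right-hand side of \eqref{A/B-2-2-form}, which gives \eqref{prod-Pn-Pm}. The second equals $\bigl(A(s_k)/B(s_k)-A(s_l)/B(s_l)\bigr)/(s_l-s_k)=0$ by \eqref{equa-6} with $z=s_k$, $\overline{w}=s_l$, which gives \eqref{prod-Qn-Qm}. The third equals $-\tfrac16\frac{\partial^3}{\partial z^3}\frac{A(z)}{B(z)}\bigg|_{z=s_k}$ by \eqref{A/B-4-form}, which gives \eqref{norm-Pn}. The fourth equals $-\frac{\partial}{\partial z}\frac{A(z)}{B(z)}\bigg|_{z=s_k}=-A'(s_k)/B(s_k)$, obtained by letting $\overline{w}\to z$ in \eqref{equa-6} (equivalently, by differentiating once in $z$ the $A\leftrightarrow B$, $s_n\leftrightarrow t_n$ analogue of \eqref{B/A-form-1} from the remark after Lemma \ref{int-form-B/A}), which gives \eqref{norm-Qn}.

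The main obstacle is the first step: recognising $2AB$ as the ``$B$-function'' of $E^2$, so that the hypothesis $AB\notin\H(E^2)$ is precisely what turns the Bessel-type inequality of \eqref{norm-basis} into a Parseval \emph{identity} on $\H(E^2)$ sampled at the nodes $\{t_n\}\cup\{s_n\}$, and correctly pinning down the diagonal values $B_{E^2}'(t)A_{E^2}(t)$ there. After that, everything reduces to bookkeeping governed by Lemmas \ref{int-form-B/A} and \ref{A/B-calculations}.
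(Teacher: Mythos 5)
Your proposal is correct and follows essentially the same route as the paper: recognising $2AB$ as the $B$-companion of $E^2$ and invoking the equality case of \eqref{norm-basis} is just a restatement of the paper's use of \cite[Theorem 22]{B2} to expand in the orthogonal basis $\{A(z)B(z)/(z-t_n)\}\cup\{A(z)B(z)/(z-s_n)\}$, with the same diagonal values $K_2(t_n,t_n)=2A(t_n)^3B'(t_n)/\pi$ and $K_2(s_n,s_n)=-2A'(s_n)B(s_n)^3/\pi$, and the same reduction of the four resulting sums to \eqref{B/A-form-0}, \eqref{A/B-2-2-form} and \eqref{A/B-4-form}. The bookkeeping of the node values of $P_k$ and $Q_k$ and the surviving $s_k$-term in $\|P_k\|^2_{E^2}$ matches the paper's computation exactly.
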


\begin{proof}
Denote by $K_2(w,z)$ the reproducing kernel of $\H(E^2)$. A simple calculation would show that $K_2(w,z)=K(w,z)J(w,z)$ where $J(w,z)=2\{\ov{A(w)}A(z)+\ov{B(w)}B(z)\}$ and $K(w,z)$ is defined in (\ref{reprod-kern}). We obtain 
\begin{equation}\label{K2-K}
K_2(t_n,t_n)=2A(t_n)^3B'(t_n)/\pi \,\,\,\, \text{ and } \,\,\,\, K_2(s_n,s_n)=-2B(s_n)^3A'(s_n)/\pi.
\end{equation}
Fix $s_k\neq s_l$. Since $AB\notin\H(E^2)$ we can apply \cite[Theorem 22]{B2} to conclude that the set of functions
$$
\bigg\{\frac{A(z)B(z)}{(z-t_n)}\bigg\}\cup\bigg\{\frac{A(z)B(z)}{(z-s_n)}\bigg\} 
$$
forms an orthogonal basis of $\H(E^2)$. Note that the above functions are multiples of $K_2(t_n,z)$ and $K_2(s_n,z)$ respectively. Hence, we can calculate inner products using this orthogonal basis. We obtain
\begin{eqnarray*}
\langle P_k,P_l\rangle^2_{E^2} = \sum_n \frac{A(t_n)^4}{(t_n-s_k)^2(t_n-s_l)^2}\frac{1}{K_2(t_n,t_n)} &=& \frac{\pi}{2}\sum_n \frac{A(t_n)}{B'(t_n)(s_k-t_n)^2(s_l-t_n)^2} \\ &=& 
-\bigg( \frac{A'(s_k)}{B(s_l)} + \frac{A'(s_k)}{B(s_l)}\bigg)\frac{\pi}{2(s_k-s_l)^2},
\end{eqnarray*}
where the last equality is due to (\ref{A/B-2-2-form}). In the same way we obtain
\begin{eqnarray*}
\langle Q_k,Q_l\rangle_{E^2}= \sum_n \frac{A(t_n)^4}{(t_n-s_k)(t_n-s_l)}\frac{1}{K_2(t_n,t_n)} &=&  
\frac{\pi}{2}\sum_n \frac{A(t_n)}{B'(t_n)(t_n-s_k)(t_n-s_l)} = 0,
\end{eqnarray*}
where the last equality is due to (\ref{B/A-form-0}), since we can change the roles of $A$ 
 and $B$ in Lemma \ref{int-form-B/A}.
 
 \smallskip
 
To calculate the norms of $P_k(z)$ and $Q_k(z)$ we use the same method, but an additional term will appear due to the function $A(z)B(z)/(z-s_k)$. We obtain
\begin{eqnarray*}
\|P_k\|^2_{E^2} = -\frac{\pi}{2}\frac{A'(s_k)^3}{B(s_k)^3}+\frac{\pi}{2}\sum_n \frac{A(t_n)}{B'(t_n)(s_k-t_n)^4} 
= -\frac{\pi}{2}\bigg( \frac{A'(s_k)^3}{B(s_k)^3} + \frac{1}{6}\frac{\partial^3}{\partial z^3}\frac{A(z)}{B(z)}\bigg|_{z=s_k}\bigg),
\end{eqnarray*}
where the last equality is due to (\ref{A/B-4-form}). Analogously, by formula (\ref{B/A-form-0}), we have
$$
\|Q_k\|^2_{E^2} = \frac{\pi}{2}\sum_n \frac{A(t_n)}{B'(t_n)(s_k-t_n)^2} = 
-\frac{\pi}{2}\frac{A'(s_k)}{B(s_k)}.
$$
\end{proof}

We say that an entire function $E(z)$ is of {\it P\'olya class} if it satisfies the following conditions
\begin{enumerate}\label{Polya-class}
\item[(i)] $E(z)\neq 0$ for every $z\in\U$.
\item[(ii)] $|E^*(z)|\leq|E(z)|$ for every $z\in\U$.
\item[(iii)] $\Rep \, [iE'(z)/E(z)] \geq 0$ for every $z\in\U$.
\end{enumerate}
If $E(z)$ is of P\'olya class and real entire we say that it is of {\it Laguerre-P\'olya class}. The usual definition of the Laguerre-P\'olya class is via uniform limits on compact sets of polynomials having only real zeros, but these two definitions are equivalent (see \cite[Theorem 7 and Problems 11,12 and 13]{B2}).

If a de Branges space $\H(E^2)$ is closed by differentiation it should have some special properties. The next lemma groups together those that are relevant for our purposes.

\begin{proposition}\label{diff-lemma}
Let $\H(E^2)$ be a de Branges space closed by differentiation, then 
\begin{enumerate}
\item $E(z)$ is a function of exponential type with no real zeros.
\item The real zeros of the functions $A_\alpha(z)$ are separated and the width of separation depends only on the norm of the differentiation operator in $\H(E^2)$. 
\item The functions
$A_\alpha(z)$ are of Laguerre-P\'olya class.
\item Let $D$ denote the norm of the differentiation operator in $\H(E^2)$. Then for every real number $\alpha$ we have
\begin{equation}\label{Sec2_Rev_eqD}
A_\alpha''(s)^2+4A_\alpha'(s)^2 \leq  (D^2+D^4)B_{\alpha}(s)^2,
\end{equation}
whenever $A_\alpha(s)=0$.
\item The function $\p'(x)$ is bounded.
\end{enumerate}
\end{proposition}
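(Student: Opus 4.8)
The plan is to extract all five items from one mechanism. Writing $D$ for the norm of the differentiation operator on $\H(E^2)$ (as in item (4)), one has $F^{(n)}\in\H(E^2)$ and $\|F^{(n)}\|_{E^2}\le D^n\|F\|_{E^2}$ for every $F\in\H(E^2)$; combining this with the reproducing-kernel inequality \eqref{swartz-ineq} for $\H(E^2)$ and with the norm computations of Lemma \ref{Pn-Qn-lemma} (applied, as needed, to $e^{i\alpha}E$ in place of $E$) will give everything. I would prove the items in the order (1), (5), (2), (3), (4).

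For (1): if $E$ had a real zero $t_0$ of multiplicity $m\ge 1$, every nonzero $F\in\H(E^2)$ would have to vanish at $t_0$ to order at least $2m$ (otherwise $F/E^2\notin L^2$ near $t_0$ and $\|F\|_{E^2}=\infty$); taking $F$ of minimal vanishing order at $t_0$ and using that $F'\in\H(E^2)$ vanishes to strictly smaller order yields a contradiction, so $E$ has no real zeros. For the exponential type, fix $F\in\H(E^2)$: iterating $|F^{(n)}(0)|\le D^n\|F\|_{E^2}\|K_2(0,\cdot)\|_{E^2}$ in the Taylor series at $0$ gives $|F(z)|\le\|F\|_{E^2}\|K_2(0,\cdot)\|_{E^2}\,e^{D|z|}$, so $F$ has exponential type at most $D$. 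Applying this to $F=K_2(x,\cdot)$ for real $x$ and expanding \eqref{reprod-kern} shows $(z-x)K_2(x,z)$ is a fixed linear combination of $A(z)B(z)$ and $B(z)^2-A(z)^2$ with coefficients depending only on $A(x),B(x)$; choosing two real $x$'s (one a zero of $A$, one with $A(x)B(x)\ne 0$) makes the resulting $2\times 2$ system nonsingular, so $AB$ and $B^2-A^2$ --- hence $E^2=-(B^2-A^2)-2iAB$, hence $E$ --- are of exponential type.

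For (5): fix a real $\alpha$ with $A_\alpha B_\alpha\notin\H(E^2)$ (all but one $\alpha$ modulo $\pi/2$) and a zero $s$ of $A_\alpha$. Then $Q_s(z):=A_\alpha(z)^2/(z-s)\in\H(E^2)$ (since $|A_\alpha/E|\le 1$ on $\R$), with $Q_s(s)=0$ and $Q_s'(s)=A_\alpha'(s)^2$, and by Lemma \ref{Pn-Qn-lemma}, \eqref{K2-K} and \eqref{phi-ident} one has $\|Q_s\|_{E^2}^2=\tfrac{\pi}{2}\p'(s)$, $K_2(s,s)=\tfrac{2}{\pi}\p'(s)B_\alpha(s)^4$ and $A_\alpha'(s)^2=\p'(s)^2B_\alpha(s)^2$. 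Feeding $Q_s'\in\H(E^2)$ into \eqref{swartz-ineq}: $A_\alpha'(s)^2=|Q_s'(s)|\le D\|Q_s\|_{E^2}\sqrt{K_2(s,s)}=D\,\p'(s)B_\alpha(s)^2$, whence $\p'(s)\le D$. As $\alpha$ ranges over the good angles this gives $\p'\le D$ on all of $\R$ except on the discrete zero set of $A_{\alpha_0}B_{\alpha_0}$ (the exceptional $\alpha_0$), hence on $\R$ by continuity of $\p'$. Then (2) is immediate: consecutive zeros $s<s'$ of any $A_\alpha$ satisfy $\pi=\p(s')-\p(s)=\int_s^{s'}\p'(x)\,\dx\le D(s'-s)$, so the zeros are $\pi/D$-separated. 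And (3) follows from (1): each $A_\alpha$ is real entire, of exponential type (hence of genus at most $1$), with only real zeros, so it is of Laguerre--P\'olya class; equivalently it satisfies (i)--(iii) of the P\'olya-class definition --- (ii) with equality since $A_\alpha^*=A_\alpha$, and (iii) because for $z\in\U$ the quantity $\re[iA_\alpha'(z)/A_\alpha(z)]$ is a sum of positive terms of the form $\im(z)/|z-z_k|^2$.

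Finally, (4) is the delicate estimate and the expected main obstacle. I would again test \eqref{swartz-ineq} against $Q_s$ together with $Q_s'$ and $Q_s''$ in $\H(E^2)$, using $Q_s'(s)=A_\alpha'(s)^2$, $Q_s''(s)=2A_\alpha'(s)A_\alpha''(s)$, $\|Q_s^{(j)}\|_{E^2}\le D^j\|Q_s\|_{E^2}$, and the identities above together with $A_\alpha'(s)=-\p'(s)B_\alpha(s)$ and $\p'(s)\le D$. The structural point is that $Q_s,Q_s',Q_s''$ are real-valued on $\R$, so in expanding $\|aQ_s'+bQ_s''\|_{E^2}^2$ the cross term $\langle Q_s',Q_s''\rangle_{E^2}$ is real; choosing $a,b\in\C$ so that $|aQ_s'(s)+bQ_s''(s)|^2$ is proportional to $A_\alpha'(s)^2\bigl(A_\alpha''(s)^2+4A_\alpha'(s)^2\bigr)$ and then dividing through by $A_\alpha'(s)^2=\p'(s)^2B_\alpha(s)^2$ produces an inequality of the shape $A_\alpha''(s)^2+4A_\alpha'(s)^2\le c(D)B_\alpha(s)^2$. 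The hard part is the bookkeeping: one must pick the combination carefully --- and use the auxiliary relations $B_\alpha'(s)=(\log|E|)'(s)B_\alpha(s)$ and $A_\alpha''(s)=-\bigl(2(\log|E|)'(s)\p'(s)+\p''(s)\bigr)B_\alpha(s)$, or combine with the function $A_\alpha(z)^2/(z-s)^2$ whose norm is given by Lemma \ref{Pn-Qn-lemma} --- so that $c(D)$ comes out exactly as $D^2+D^4$, giving \eqref{Sec2_Rev_eqD}; the excluded angle $\alpha_0$ is then recovered by letting $\alpha\to\alpha_0$, since both sides of \eqref{Sec2_Rev_eqD} are continuous in $\alpha$.
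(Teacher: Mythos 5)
Your proposal is correct in substance and, for the pivotal item (4), its mechanism is exactly the paper's: test \eqref{swartz-ineq} against $Q_s'$ and $Q_s''$, use $Q_s'(s)=A_\alpha'(s)^2$, $Q_s''(s)=2A_\alpha'(s)A_\alpha''(s)$ and $\|Q_s\|_{E^2}^2K_2(s,s)=A_\alpha'(s)^2B_\alpha(s)^2$, then divide by $A_\alpha'(s)^2$. The step you flag as ``the hard part'' is actually the mechanical one: $|Q_s^{(j)}(s)|\leq D^j\|Q_s\|_{E^2}K_2(s,s)^{1/2}$ gives $|A_\alpha'(s)|\leq D|B_\alpha(s)|$ and $2|A_\alpha''(s)|\leq D^2|B_\alpha(s)|$ outright; no auxiliary relations involving $(\log|E|)'$ are needed, and chasing the literal constant $D^2+D^4$ is pointless --- the paper's own substitution into $|Q'|^2+|Q''|^2\leq(D^2+D^4)\|Q\|^2K_2$ does not reproduce \eqref{Sec2_Rev_eqD} verbatim either, and the inequality is only ever used up to constants depending on $D$. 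Where you genuinely diverge: (i) the paper proves (4) for \emph{every} $\alpha$ by using only the one-sided bound \eqref{est-Qn-norm}, $\|Q_s\|_{E^2}^2\leq\|A_\alpha(\cdot)/(\cdot-s)\|_E^2=\pi\p'(s)$, which needs neither $A_\alpha B_\alpha\notin\H(E^2)$ nor Lemma \ref{Pn-Qn-lemma}, so no exceptional angle arises; your reliance on the exact norms forces the (correct but avoidable) limiting argument $\alpha\to\alpha_0$ and the continuity step in (5). (ii) You obtain (5) directly from the $Q_s$ computation and then (2) from $\pi=\int_s^{s'}\p'\leq D(s'-s)$, whereas the paper deduces (2) from (4) via the interlacing of $\{t_n\}$ and $\{s_n\}$ and gets (5) as a one-line corollary of (4); your route is cleaner. (iii) For (3) you invoke Hadamard factorization (real entire, exponential type, only real zeros implies Laguerre--P\'olya), which is legitimate once (1) is in place and much shorter than the paper's bounded-type argument showing $\re[iA'/A]>0$ in $\U$. (iv) For the exponential type of $E$ you extract $AB$ and $B^2-A^2$ from $(z-x)K_2(x,z)$, while the paper writes $E^2=[(z-i)G(z)+E(i)^2F(z)]/F(i)$ with $G\in\H(E^2)$; your version works, but choose two real $x$ with linearly independent coefficient vectors $(A(x)^2-B(x)^2,\,A(x)B(x))$ (such exist since $\p$ is non-constant) rather than insisting one of them be a zero of $A$, which need not exist a priori.
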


\begin{proof}
 First we prove (1). If $F\in\H(E^2)$ then for any $w\in\C$ we have
 $$
 |F(z)| \leq \sum_{n\geq 0} \frac{|F^{(n)}(w)|}{n!}|z-w|^n \leq \|F\|_{E^2} K_2(w,w)^{1/2} \sum_{n\geq 0} \frac{D^n|z-w|^n}{n!}=\|F\|_{E^2}K_2(w,w)^{1/2}e^{D|z-w|}\,,
 $$
where we have used (\ref{swartz-ineq}) and $D$ denotes the norm of the differentiation operator. We conclude that every function $F\in\H(E^2)$ is of exponential type at most $D$. Fix a function $F\in\H(E^2)$ with $F(i)\neq 0$. We conclude that $G(z)=[F(i)E(z)^2-E(i)^2F(z)]/(z-i)$ belongs to $\H(E^2)$
 and 
 $$
 E(z)^2=[(z-i)G(z)+E(i)^2F(z)]/F(i).
 $$
Hence, $E(z)$ is of exponential type at most $D/2$. $E(z)$ cannot have real zeros since the differentiation reduces the order of the zeros (this argument is due to A. Baranov see \cite{Bar2}).
 
\smallskip

Now we prove (4). Since $(e^{i\alpha}E(z))^2$ generates the same space that $E(z)^2$ generates, we can assume that $\alpha=0$. 
We have the following Taylor's expansion for the function $Q_n(z)$
 $$
 Q_n(z)=A'(s_n)^2(z-s_n) + A'(s_n)A''(s_n)(z-s_n)^2 + ...
 $$
Letting $K_2(w,z)$ be the reproducing kernel of $\H(E^2)$ and using the Cauchy-Schwarz inequality, we obtain
 $$
 A'(s_n)^4 + 4A'(s_n)^2A''(s_n)^2=|Q_n'(s_n)|^2+|Q_n''(s_n)|^2\leq (D^2+D^4)\|Q_n\|_{E^2}^2K_2(s_n,s_n).
 $$
 Since 
 $$
 K_2(s_n,s_n)=-2A'(s_n)B(s_n)^3/\pi
 $$
 and
 \begin{equation}\label{est-Qn-norm}
 \|Q_n\|_{E^2}^2\leq \|A(z)/(z-s_n)\|^2_E=-\pi [A'(s_n)/B(s_n)]
 \end{equation}
we obtain the desired inequality \eqref{Sec2_Rev_eqD}.

\smallskip
 
Now we prove (3). First assume that $\alpha=0$ and $A\notin \H(E)$.
Take $F\in\H(E^2)$ such that $F(0)=1$ and write $a=E(0)$. We conclude that
$$
 \frac{\partial}{\partial z}\frac{E(z)^2-F(z)a^2}{z}=[2E'(z)E(z)-F'(z)a^2]/z - [E(z)^2-F(z)a^2]/z^2
$$
belongs to the space $\H(E^2)$. Using (\ref{mean-type-relations}) we conclude that $E'(z)/E(z)$ is of bounded type in $\U$ with non-positive mean type. Also,
 $$
 \int_\R |E'(t)/E(t)|^2\frac{\dt}{1+t^2}<\infty.
$$
Applying the same argument with $E^*(z)$ we obtain that
$$
 \frac{\partial}{\partial z}\frac{E(z)E^*(z)-F(z)|a|^2}{z}=[E'(z)E^*(z)+E(z)E'^*(z)-F'(z)|a|^2]/z - [E(z)E^*(z)-F(z)|a|^2]/z
 $$
belongs to the space $\H(E^2)$, hence $E'^*(z)/E(z)$ is of bounded type in $\U$ with non-positive mean type. We conclude that $A'(z)/E(z)$ is of bounded type in $\U$ with non-positive mean type.  
 
\smallskip

Now take $b\in\R$ such that $A(b)\neq 0$ and $F\in\H(E)$ 
with $F(b)=1$. Then $[A'(z)-A'(b)F(z)]/(z-b)$ belongs to $\H(E)$ and, since $A\notin\H(E)$, we can apply \cite[Theorem 22]{B2} to obtain
 $$
 \frac{A'(z)-A'(b)F(z)}{A(z)(z-b)}=\sum_m  \frac{1-A'(b)F(s_m)/A'(s_m)}{(s_m-b)(z-s_m)}.
 $$
 By the same theorem we have 
 $$
 F(z)/A(z)=\sum_m\frac{F(s_m)}{A'(s_m)(z-s_m)}.
 $$
We conclude that 
 $$
\Rep \,\, i\frac{A'(z)}{A(z)} = y\sum_m \frac{1}{|z-s_m|^2} > 0
 $$
 for $y>0$. Hence $A(z)$ is of P\'olya class (see \cite[Section 7]{B2}). Since $A(z)$ is real entire, it belongs to the Laguerre-P\'olya class.
 A similar argument would show that $A_\alpha(z)$ is of Laguerre-P\'olya class whenever $A_\alpha\notin\H(E)$. Since the Laguerre-P\'olya class is closed by pointwise limits and there exists at most one $\alpha$ modulo $\pi/2$ such that $A_\alpha\in\H(E)$, item (3) follows.
 
 \smallskip
 
We now prove (2). Assume first that $AB \notin \H(E^2)$. By inequality (\ref{est-Qn-norm}) we get, for all $m$ and $n$, 
\begin{equation}\label{A-ineq-cor}
 A(t_m)^4/(t_m-s_n)^2 =|Q_n(t_m)|^2\leq \|Q_n\|^2_{E^2}K_2(t_m,t_m) = -2[A'(s_n)/B(s_n)]B'(t_m)A(t_m)^3.
\end{equation}
Recalling that $B_{\alpha-\pi/2}=A_\alpha$, by item (4) we obtain 
\begin{equation}\label{Sec2_Rev_eq_t_n_s_m}
(t_m-s_n)^{-2}\<_D 1\,,
\end{equation}
which proves item (2), since the points $\{t_n\}$ and $\{s_n\}$ are interlaced.

\smallskip
 
Finally, for item (5), note that
 $$
 \p'(s_n)=- [A'(s_n)/B(s_n)],
 $$
which is bounded by item (4). In general, if we take a real point $s$ such that $\p(s)\equiv \alpha-\pi/2 \ (\!\!\!\!\mod \pi)$ then 
 $$
 \p'(s)=  -[A'_\alpha(s)/B_\alpha(s)] \<_D 1.
 $$
\end{proof}
 
 \noindent {\it Remark:} In \cite[Section 4.1]{Bar} A. Baranov constructed spaces $\H(E)$ that are closed by differentiation, but $\p'(x)$ is unbounded. Thus, for a space $\H(E^2)$ to be closed by differentiation we have to require stronger restrictions on the function $E(z)$. For instance, the boundedness of $\p'(x)$ will play an important role in the proof of Theorem $\ref{gen-int-form-der}$, since it implies that the points of interpolation $\Tau(\alpha)$ are separated.
 
 \smallskip

For the sake of completeness we state here a result about Hilbert-type inequalities proved in \cite[Corollary 22]{CLV}.

\begin{proposition}\label{hilb-type-ineq}
Let $\xi_1, \xi_2,..., \xi_N$ be real numbers such that $0<\sigma \leq |\xi_n-\xi_m|$ whenever $m \neq n$. Let $a_1, a_2,..., a_N$ be complex numbers. Then
$$
-\frac{\pi^2}{6\sigma^2}\sum_{n=1}^N |a_n|^2 \leq \sum_{\stackrel{m,n=1}{m\neq n}}^N\frac{a_n\ov a_m}{(\xi_n-\xi_m)^2}\leq \frac{\pi^2}{3\sigma^2}\sum_{n=1}^N|a_n|^2.
$$
The constants appearing in these inequalities are the best possible.
\end{proposition}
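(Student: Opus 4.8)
The plan is to prove the two inequalities separately, by rather different means: the upper bound is elementary and already sharp, whereas the lower bound is the one that genuinely uses the Beurling--Selberg extremal function machinery of \cite{CLV}. By homogeneity we may rescale $\xi_n\mapsto\xi_n/\sigma$, so it suffices to treat the case $\sigma=1$.

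For the upper bound I would simply use diagonal dominance. Since the kernel $(\xi_n-\xi_m)^{-2}$ is symmetric,
\begin{equation*}
\Big|\sum_{\substack{m,n=1\\ m\neq n}}^{N}\frac{a_n\ov{a_m}}{(\xi_n-\xi_m)^2}\Big|\leq\sum_{\substack{m,n=1\\ m\neq n}}^{N}\frac{|a_n|^2+|a_m|^2}{2(\xi_n-\xi_m)^2}=\sum_{n=1}^{N}|a_n|^2\sum_{m\neq n}\frac{1}{(\xi_n-\xi_m)^2},
\end{equation*}
and the $1$-separation forces, for each fixed $n$, the points on one side of $\xi_n$ to lie at distances $\geq 1,2,3,\dots$ from it, so $\sum_{m\neq n}(\xi_n-\xi_m)^{-2}\leq 2\sum_{k\geq1}k^{-2}=\pi^2/3$. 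This yields the right-hand inequality (and also the lower bound $-\tfrac{\pi^2}{3}\sum|a_n|^2$, which has to be improved by a factor $2$).

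The heart of the matter is the left-hand inequality, and the plan is to reduce it to the construction of a single real function $\Phi$ on $\R$ with (i) $\widehat\Phi\geq0$, (ii) $\Phi(x)=x^{-2}$ for all $|x|\geq1$, and (iii) $\Phi(0)=\pi^2/6$, where $\widehat\Phi(t)=\int_\R\Phi(x)e^{-2\pi ixt}\dx$. Granting such a $\Phi$, the separation hypothesis gives $\Phi(\xi_n-\xi_m)=(\xi_n-\xi_m)^{-2}$ for all $m\neq n$, so Fourier inversion together with $\widehat\Phi\geq0$ (Bochner's theorem) yields
\begin{equation*}
0\leq\int_\R\widehat\Phi(t)\,\Big|\sum_{n=1}^{N}a_ne^{2\pi i\xi_nt}\Big|^2\dt=\sum_{m,n}a_n\ov{a_m}\,\Phi(\xi_n-\xi_m)=\Phi(0)\sum_n|a_n|^2+\sum_{m\neq n}\frac{a_n\ov{a_m}}{(\xi_n-\xi_m)^2},
\end{equation*}
which is exactly the claimed bound. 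To build $\Phi$, one observes that (ii) means $\Phi(x)-x^{-2}$ is supported in $[-1,1]$, so $g:=\widehat{\Phi-x^{-2}}$ is entire of exponential type $\leq 2\pi$; since $\widehat{x^{-2}}(t)=-2\pi^2|t|$, condition (i) becomes $g(t)\geq2\pi^2|t|$ for every real $t$, and then $\Phi(0)=\int_\R\widehat\Phi=\int_\R\big(g(t)-2\pi^2|t|\big)\dt$. Thus one is asking for the extremal band-limited majorant of $2\pi^2|t|$ of exponential type $2\pi$, whose minimal $L^1$ overshoot is $\pi^2/6$; this is precisely what \cite[Corollary 22]{CLV} provides (see also \cite{V}), and the complementary minorant problem returns the constant $\pi^2/3$ and re-proves the upper bound. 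The one genuinely nontrivial step is therefore this construction and the evaluation of the extremal constant, which I expect to be the main obstacle.

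Sharpness is then easy to check on the lattice $\xi_n=n$, $1\leq n\leq N$: the choice $a_n\equiv1$ gives $\sum_{m\neq n}(n-m)^{-2}=\tfrac{\pi^2}{3}N+O(1)$, while $a_n=(-1)^n$ gives $\sum_{m\neq n}(-1)^{n-m}(n-m)^{-2}=-\tfrac{\pi^2}{6}N+O(1)$ as $N\to\infty$, the boundary contributions being bounded node by node. Rescaling reinstates the factors $\sigma^{-2}$.
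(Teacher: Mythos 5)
The paper does not prove this proposition at all: it is imported verbatim from \cite[Corollary 22]{CLV} (``for the sake of completeness we state here a result\dots proved in \cite{CLV}''), so there is no internal argument to compare yours against. Judged on its own terms, your proposal is essentially a correct reconstruction of how \cite{CLV} obtains such inequalities, with one genuine improvement and one deferred step. The upper bound via diagonal dominance and the counting estimate $\sum_{m\neq n}(\xi_n-\xi_m)^{-2}\leq 2\sum_{k\geq 1}k^{-2}=\pi^2/3$ is complete, elementary, and already sharp --- this is simpler than the extremal-function route and is a nice observation. The lower bound is correctly reduced to the existence of an even $\Phi\in L^1(\R)$ with $\widehat\Phi\geq 0$, $\Phi(x)=x^{-2}$ for $|x|\geq 1$ and $\Phi(0)=\pi^2/6$; the positivity argument and the passage through $\Phi(\xi_n-\xi_m)=(\xi_n-\xi_m)^{-2}$ (including the boundary case $|\xi_n-\xi_m|=\sigma$, which is fine by continuity of $\Phi$) are sound, and your sharpness computations $\sum_{k\neq 0}k^{-2}=\pi^2/3$ and $\sum_{k\neq 0}(-1)^kk^{-2}=-\pi^2/6$ are correct. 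The one step you do not carry out is the construction of $\Phi$, equivalently the extremal type-$2\pi$ majorant of $2\pi^2|t|$ with minimal $L^1$-excess $\pi^2/6$; you outsource exactly this to \cite{CLV}, which is the same dependency the paper itself has, so nothing is lost relative to the paper, but be aware that this is where all the work lives. Two small cautions: (a) the manipulation of $\widehat{x^{-2}}=-2\pi^2|t|$ requires the finite-part regularization of $x^{-2}$ at the origin (otherwise $\Phi-x^{-2}$ is not a distribution), and the additive normalization of the finite part must be tracked if you want $\Phi(0)=\int_\R\widehat\Phi$ to come out right --- it is cleaner to define $\Phi$ directly and verify $\widehat\Phi\geq 0$ classically, keeping the distributional computation as motivation only; (b) the interchange of sum and integral and the use of Fourier inversion need $\widehat\Phi\in L^1$, which does follow from $\widehat\Phi\geq 0$ together with continuity of $\Phi$ at $0$, but should be said.
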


\subsection{Proof of Theorem \ref{gen-int-form-der} - The case $p=2$}\label{proof-p=2}

The idea of the proof is to show that (\ref{gen-int-form})
holds for a dense set of functions in $\H(E^2)$ and then argue that we can interchange limits and summation. In fact we will show convergence of the formula in the space $\H(E^2)$, which implies convergence on compact sets of $\C$. \smallskip

First of all, we can assume $\alpha=-\pi/2$ which is no restriction since $\H(E^2)=\H(e^{2i\alpha}E^2)$ isometrically. Also, note that $B_{-\pi/2}(z)=A(z)$ and $A_{-\pi/2}(z)=-B(z)$. We will denote by $D$ the norm of the differentiation operator in $\H(E^2)$. By the hypothesis of the theorem there exists a number $\delta>0$ such that 
\begin{equation}\label{magic-cond}
|A'(s_n)/B(s_n)|=\p'(s_n)\geq\delta \,\,\, {\text{ for all }} n .
\end{equation}
We divide the proof in a few steps. \smallskip

\noindent {\it \large Step 1}. We show that the quantities in (\ref{norm-Pn}) and (\ref{norm-Qn}) are uniformly bounded. By (\ref{norm-Qn}) and Proposition \ref{diff-lemma} item (4), we have
\begin{equation}\label{norm-Qn-estimate}
\|Q_k\|^2_{E^2} = -\frac{\pi}{2}\frac{A'(s_k)}{B(s_k)} \<_D 1.
\end{equation}
By (\ref{norm-Pn}) we have 
$$
\|P_k\|^2_{E^2} = -\frac{\pi}{2}\bigg( \frac{A'(s_k)^3}{B(s_k)^3} + 
\frac{1}{6}\bigg[\frac{A'''(s_k)}{B(s_k)}-3\frac{A''(s_k)B'(s_k)}{B(s_k)^2}-3\frac{A'(s_k)B''(s_k)}{B(s_k)^2}
+6\frac{A'(s_k)B'(s_k)^2}{B(s_k)^3}\bigg]\bigg).
$$
Again, by Proposition \ref{diff-lemma} item (4), we obtain
\begin{equation}\label{norm-Pn-ineq}
 \|P_k\|^2_{E^2} \<_D 1 + \bigg|\frac{A'''(s_k)}{B(s_k)}\bigg| + \bigg|\frac{B'(s_k)}{B(s_k)}\bigg| 
+ \bigg|\frac{B''(s_k)}{B(s_k)}\bigg| + \bigg|\frac{B'(s_k)}{B(s_k)}\bigg|^2.
\end{equation}
We claim that each quantity appearing on the right hand side of the last inequality is bounded independently of $s_k$. By definition (\ref{def-Pn-Qn}), identities (\ref{norm-Qn}) and (\ref{K2-K}) we have
\begin{eqnarray*}
|2A'''(s_k)A'(s_k) + 3A''(s_k)^2/2|^2&=&|Q_k'''(s_k)|^2 \\ &\leq& D^6 \|Q_k\|^2_{E^2} K_2(s_k,s_k) \\&=&D^6 |A'(s_k)^2B(s_k)^2|.
\end{eqnarray*}
Hence, by Proposition \ref{diff-lemma} item (4) and hypothesis (\ref{magic-cond}) we obtain
\begin{equation}\label{norm-Pn-ineq-1}
 \bigg|\frac{A'''(s_k)}{B(s_k)}\bigg| \<_{D,\delta}1.
\end{equation}
If we write $R_k(z)=A(z)B(z)/(z-s_k)$ for every $k$, we obtain 
\begin{eqnarray*}
|R_k'(s_k)|^2+|R_k''(s_k)|^2 &\leq &(D^2 + D^4)\|R_k\|^2_{E^2} K_2(s_k,s_k)\\ & \leq&  (D^2 + D^4)\|A(z)/(z-s_k)\|^2_E K_2(s_k,s_k),
\end{eqnarray*}
which is equivalent to
\begin{eqnarray*}
 |A''(s_k)B(s_k)/2+A'(s_k)B'(s_k)|^2 & + &| A'''(s_k)B(s_k)/3+A''(s_k)B'(s_k)+A'(s_k)B''(s_k)|^2  \\
 &\leq &  2(D^2+D^4)|A'(s_k)B(s_k)|^2. 
\end{eqnarray*}
Dividing both sides by $|A'(s_k)B(s_k)|^2$ we obtain
\begin{equation}\label{equa-9}
\bigg|\frac{A''(s_k)}{2A'(s_k)}+\frac{B'(s_k)}{B(s_k)}\bigg| \<_D 1 
\end{equation}
and 
\begin{equation}\label{equa-10}
\bigg|\frac{A'''(s_k)}{3A'(s_k)}+\frac{A''(s_k)B'(s_k)}{A'(s_k)B(s_k)}+\frac{B''(s_k)}{B(s_k)}\bigg| \<_D 1.
\end{equation}
Using (\ref{equa-9}) we obtain
\begin{equation}\label{norm-Pn-ineq-2}
\bigg|\frac{B'(s_k)}{B(s_k)} \bigg| \<_D 1+\bigg|\frac{A''(s_k)}{A'(s_k)}\bigg| = 1+
\bigg|\frac{A''(s_k)/B(s_k)}{A'(s_k)/B(s_k)}\bigg| \<_{D,\delta}1,
\end{equation}
where the last inequality is due to Proposition \ref{diff-lemma} item (4) and (\ref{magic-cond}). Using (\ref{equa-10}) we obtain
\begin{equation*}
\bigg|\frac{B''(s_k)}{B(s_k)} \bigg| \<_{D} 1+\bigg|\frac{A'''(s_k)/B(s_k)}{A'(s_k)/B(s_k)}\bigg|+\bigg|\frac{A''(s_k)B'(s_k)}{A'(s_k)B(s_k)}\bigg|.
\end{equation*}
Hence, by (\ref{magic-cond}), (\ref{norm-Pn-ineq-1}) and (\ref{norm-Pn-ineq-2}) we obtain
\begin{equation}\label{norm-Pn-ineq-3}
\bigg|\frac{B''(s_k)}{B(s_k)} \bigg|\<_{D,\delta} 1.
\end{equation}
Thus, by (\ref{norm-Pn-ineq}), (\ref{norm-Pn-ineq-1}), (\ref{norm-Pn-ineq-2}) and (\ref{norm-Pn-ineq-3}) we obtain
$$
\|P_k\|^2_{E^2} \<_{D,\delta} 1.
$$

\smallskip

\noindent {\it \large Step 2.} Since $E(z)^2=A(z)^2-B(z)^2 - i2A(z)B(z)$ and, by hypothesis, $AB\notin\H(E^2)$ we conclude that $A,B\notin\H(E)$ and the functions
$$
\{A(z)B(z)/(z-s_j)\} \cup\{A(z)B(z)/(z-t_j)\}
$$
form an orthogonal basis of $\H(E^2)$. We show that formula (\ref{gen-int-form}) holds for any of these functions, 
hence it holds for any finite linear combination of them. If we put $F(z)=A(z)B(z)/(z-s_j)$ on the right hand side of formula (\ref{gen-int-form})
we obtain
$$
A(z)^2\bigg[ \frac{B(s_j)}{A'(s_j)(z-s_j)^2} - \frac{B(s_j)A''(s_j)}{2A'(s_j)^2(z-s_j)} + \frac{B'(s_j)}{A'(s_j)(z-s_j)} 
+\sum_{n\neq j} \frac{B(s_n)}{A'(s_n)(z-s_n)(s_n-s_j)} \bigg].
$$
This is equal to $A(z)B(z)/(z-s_j)$ by Lemma \ref{int-form-B/A} formula (\ref{B/A-form-1}). A similar argument would show that  formula (\ref{gen-int-form}) holds for $F(z)=A(z)B(z)/(z-t_j)$, but now using 
Lemma \ref{int-form-B/A} formula (\ref{B/A-form-2}). 

\smallskip

\noindent {\it \large Step 3}. Now we prove that formula (\ref{gen-int-form}) converges in the norm of $\H(E^2)$ for every $F\in\H(E^2)$. Since $AB\notin\H(E^2)$, by (\ref{norm-basis}) and (\ref{K2-K}), if $F\in\H(E^2)$ we have
\begin{equation}\label{norm-E2}
 \|F\|^2_{E^2} = \sum_n \bigg\{\frac{|F(s_n)|^2}{K_2(s_n,s_n)} + \frac{|F(t_n)|^2}{K_2(t_n,t_n)}\bigg\} = 
\frac{\pi}{2}\sum_n \bigg\{\frac{|F(s_n)|^2}{|A'(s_n)B(s_n)^3|} + \frac{|F(t_n)|^2}{B'(t_n)A(t_n)^3}\bigg\}.
\end{equation}
Hence, to prove the convergence of formula (\ref{gen-int-form}) in the space $\H(E^2)$, it is sufficient to show the following inequality
\begin{align}\label{vital-ineq}
\begin{split}
 \bigg\|\sum_{n\in I} \bigg\{\frac{z_n}{A'(s_n)^2}P_n(z)+ 
\frac{w_n}{A'(s_n)^2}Q_n(z)-\frac{z_nA''(s_n)}{A'(s_n)^3}Q_n(z) \bigg\}\bigg\|^2_{E^2}  \\ \<_{D,\delta} \sum_{n\in I} \bigg\{
\frac{|z_n|^2}{|A'(s_n)B(s_n)^3|} + \frac{|w_n|^2}{|A'(s_n)B(s_n)^3|}\bigg\}
\end{split}
 \end{align}
for every finite set $I\subset\Z$ and complex numbers $\{z_n,w_n\}_{n\in I}$. This would show, together with (\ref{norm-E2}), that the partial sums of formula (\ref{gen-int-form}) form a Cauchy sequence in the norm $\|\cdot\|_{E^2}$ for all $F\in\H(E^2)$.
\smallskip

By Lemma \ref{Pn-Qn-lemma} formula (\ref{prod-Qn-Qm}) the functions $\{Q_n(z)\}$ are orthogonal, thus
\begin{eqnarray*}
 \bigg\|\sum_{n\in I}  
\frac{w_n}{A'(s_n)^2}Q_n(z)-\frac{z_nA''(s_n)}{A'(s_n)^3}Q_n(z) \bigg\|^2_{E^2} &\<_D& 
\sum_{n\in I}  \frac{|w_n|^2}{|A'(s_n)|^4} + \frac{|z_nA''(s_n)|^2}{|A'(s_n)|^6}  \\ &\<_{D,\delta}& 
\sum_{n\in I}  \frac{|w_n|^2}{|A'(s_n)B(s_n)^3|} + \frac{|z_n|^2}{|A'(s_n)B(s_n)^3|}\,,
\end{eqnarray*}
where the first inequality is due to orthogonality and estimate (\ref{norm-Qn-estimate}) of Step 1. The last inequality is due to (\ref{magic-cond}) and Proposition \ref{diff-lemma} item (4). Analogously, by Lemma \ref{Pn-Qn-lemma} formula (\ref{prod-Pn-Pm}) and Step 1, we obtain
\begin{eqnarray*}
\bigg\|\sum_{n\in I} \frac{z_n}{A'(s_n)^2}P_n(z)\bigg\|^2_{E^2} & =&
\sum_{n,m\in I} \frac{z_n \ov z_m}{A'(s_n)^2A'(s_m)^2}\langle P_n,P_m \rangle_{E^2} \\ 
&\<_{D}& 
\sum_{\{n\neq m\}\subset I} \frac{|z_nz_m|}{A'(s_n)^2A'(s_m)^2(s_n-s_m)^2} + \sum_{n\in I}\frac{|z_n|^2}{A'(s_n)^4}.
\end{eqnarray*}
The first term on the right hand side of the last inequality is in the form of a Hilbert-type sum as in Proposition \ref{hilb-type-ineq}, at the points $\xi_n=s_n$ and $a_n=|z_n|/A'(s_n)^2$. By Proposition \ref{diff-lemma}, the zeros of $A(z)$ are separated with width of separation depending only on $D$. Hence we can apply Proposition \ref{hilb-type-ineq} to obtain
$$
\bigg\|\sum_{n\in I} \frac{z_n}{A'(s_n)^2}P_n(z)\bigg\|^2_{E^2}  \<_D
\sum_{n\in I}\frac{|z_n|^2}{A'(s_n)^4} \<_{D,\delta} \sum_{n\in I}\frac{|z_n|^2}{|A'(s_n)B(s_n)^3|}.
$$
This proves the desired inequality (\ref{vital-ineq}). Also note that if we define
\begin{equation}\label{F0-def}
F_0(z)=\lim_{N\to\infty} A(z)^2\sum_{|k|\leq N}\bigg\{\frac{F(s_k)}{A'(s_k)^2(z-s_k)^2}+ 
\frac{F'(s_k)A'(s_k)-F(s_k)A''(s_k)}{A'(s_k)^2(z-s_k)}  \bigg\}\,,
\end{equation}
then by (\ref{norm-E2}) and (\ref{vital-ineq}) we have
\begin{equation}\label{vital-ineq-2}
\|F_0\|^2_{E^2} \<_{D,\delta} \, \sum_k \frac{|F(s_k)|^2+|F'(s_k)|^2}{K_2(s_k,s_k)} \leq (1+D^2)\|F\|^2_{E^2}.
\end{equation}

\smallskip

\noindent {\it \large Step 4}. Now we finish the proof. Take $F\in\H(E^2)$ and denote by $F_0\in\H(E^2)$ the function given by the formula (\ref{F0-def}). Note that the $F_0(z)$ is well defined due to Step 3. We claim that $F=F_0$. Given $\varepsilon>0$, by Steps 2 and 3 there exists a function $G\in\H(E^2)$ 
such that the formula holds and $\|F-G\|_{E^2}<\varepsilon$, which implies $\|F'-G'\|_{E^2}<D\varepsilon$. We obtain
\begin{eqnarray*}
 \|F-F_0\|^2_{E^2} < 2\varepsilon^2 + 2\|F_0-G\|^2_{E^2} & \<_{D,\delta} & 2\varepsilon^2 + \sum_{n} 
\frac{|F(s_n)-G(s_n)|^2}{|A'(s_n)B(s_n)^3|} + \frac{|F'(s_n)-G'(s_n)|^2}{|A'(s_n)B(s_n)^3|} 
\\ & \leq &  2\varepsilon^2 + \|F-G\|^2_{E^2} + \|F'-G'\|^2_{E^2} \\ &<& 3\varepsilon^2 + D^2\varepsilon^2 ,
\end{eqnarray*}
where the second inequality is due to (\ref{vital-ineq-2}) and the third due to (\ref{norm-E2}). Since $\varepsilon>0$ is arbitrary, we conclude the proof.

\subsection{Proof of Corollary \ref{cor}}

By the Plancherel-P\'olya Theorem (see \cite{PP}) $\PW(a,2)$ is closed by differentiation. Denote by $K_2(w,z)$ the reproducing kernel of $\H(E^2)$ and note that $H(w,z)=\frac{\sin(a(z-\ov w))}{\pi(z-\ov w)}$ is the reproducing kernel of $\PW(a,2)$. Since $\PW(a,2)=\H(E^2)$ as sets, by the Closed Graph Theorem there exists a constant $C>0$ such that 
\begin{equation}\label{norm-equiv}
C^{-1}\|F\|_{L^2(\R)}\leq \|F\|_{E^2}\leq C\|F\|_{L^2(\R)},
\end{equation}
for every $F\in\PW(a,2)$. The reproducing kernel property implies that
$$
K_2(w,w)=\sup \, \{|F(w)|^2 : F\in\PW(a,2), \,\,\,\, \|F\|_{E^2}\leq 1\}
$$
and
$$
\frac{\sin(a(w-\ov w))}{\pi(w-\ov w)}=\sup \, \{|F(w)|^2 : F\in\PW(a,2), \,\,\,\, \|F\|_{L^2(\R)}\leq 1\}
$$
for every $w\in\C$. We conclude that 
\begin{equation}\label{est-K2-cor}
K_2(t,t)\simeq_{a,C} 1
\end{equation}
for all real $t$. Since $|E(t)|^4\p'(t)=\tfrac{\pi}{2} K_2(t,t)$, and $|A(t)|\leq M$ whenever $B(t)=0$, we conclude that 
$$
1 \<_{C,a,M} \p'(t)
$$ 
whenever $B(t)=0$. 

We claim that $AB\notin\PW(a,2)$. Since $\H(E^2)=\PW(a,2)$ we easily obtain that $\H(E_a^2)=\PW(\pi,2)$ where $E_a(z)=E(\tfrac{\pi}{a}z)$. Since $\PW(\pi,2)$ is closed by differentiation, by Proposition \ref{diff-lemma}, the real zeros of $L(z)=A(\tfrac{\pi}{a}z)B(\tfrac{\pi}{a}z)$ are separated. Hence, we can apply \cite[Theorem 1]{OS} to conclude that the sequence $\{t\in\R: L(t)=0\}$ is sampling for $\PW(\pi,2)$, that is
$$
\int_\R|F(x)|^2\dx \simeq \sum_{L(t)=0}|F(t)|^2
$$
for every $F\in\PW(\pi,2)$. Thus $AB\notin\PW(a,2)$, otherwise $L(z)$ would belong to $\PW(\pi,2)$ and have zero norm, a contradiction. We conclude that all the conditions of Theorem \ref{gen-int-form-der} are satisfied for $\H(E^2)$ and $\alpha=0$. By the interpolation formula (\ref{gen-int-form}), the proof of Theorem \ref{gen-int-form-der} and estimates (\ref{vital-ineq-2}), (\ref{norm-equiv}) and (\ref{est-K2-cor}), the corollary easily follows.
\medskip

\noindent {\it Remark:} By Proposition \ref{diff-lemma} item (5) and inequalities (\ref{A-ineq-cor}) and (\ref{est-K2-cor}) we conclude that 
$$
|A(t)|^4 \leq \pi|t-s|^2\p'(s)K_2(t,t) \< |t-s|^2,
$$
whenever $B(t)=0$ and $A(s)=0$. Hence, the condition 
$$
\sup_{B(t)=0}\inf_{A(s)=0}|t-s|<\infty
$$
ensures the existence of a number $M>0$ such that $|A(t)|\leq M$ whenever $B(t)=0$.

\section{$L^p$ de Branges spaces}\label{Lp-form}

\subsection{Preliminaries} Recall that we denote by $\U$ the open upper half-plane. For a given $p\in(0,\infty]$ we define the Hardy space $H^p(\U)$ as the space of functions $F:\U\to\C$ analytic in $\U$ such that 
$$
\sup_{y>0}\|F(\cdot+iy)\|_p <\infty\,,
$$
where $\|\cdot\|_p$ stands for the standard $L^p(\R,\dx)$-norm. In the case $p\in[1,\infty]$ it can be proven that for every $F\in H^p(\U)$ the limit
$$
F(x)=\lim_{{y\to 0}}F(x+iy)
$$
exists for almost every real $x$ and defines a function in $L^p(\R,\dx)$. Moreover, the following Poisson representation holds
 \begin{equation*}\label{poisson-rep}
  \Rep \, F(z) = \frac{y}{\pi}\int_\R \frac{ \Rep \, F(t)}{(x-t)^2+y^2}\,\dt.
 \end{equation*}
 Using this representation and Young's inequality for convolutions, one can deduce that $\sup_{y>0}\|F(\cdot+iy)\|_p=\|F\|_p$ and $H^p(\U)$ is a Banach space for $p\geq 1$. All these facts are contained in \cite{ABR}.
 \smallskip
 

The next proposition provides a different definition of the spaces $\H^p(E)$. 

\begin{proposition}
Let $F(z)$ be an analytic function on the upper half-plane that has a continuous extension to the closed upper half-plane. The following are equivalent:

\begin{enumerate}
 \item $\sup_{y>0}\|F(\cdot+iy)\|_p<\infty$
 \item $F(z)$ is of bounded type in $\U$ with non-positive mean type and
 $$
 \|F\|_p < \infty.
 $$
\end{enumerate}
\end{proposition}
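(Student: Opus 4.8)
The plan is to prove the two implications separately; each reduces to a classical fact about functions of bounded type in $\U$, and the remaining manipulations are soft.

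First I would treat $(2)\Rightarrow(1)$. Assume $F$ is of bounded type in $\U$ with non-positive mean type, continuous on $\overline{\U}$, and $F\in L^p(\R)$ (so $\|F\|_p<\infty$); we may assume $F\not\equiv 0$. The target is the Poisson-type majorant
$$
|F(x+iy)|^p\;\le\;\frac{y}{\pi}\int_{\R}\frac{|F(t)|^p}{(x-t)^2+y^2}\,\dt ,
$$
from which integrating in $x$, using Tonelli's theorem together with $\frac{y}{\pi}\int_{\R}\big((x-t)^2+y^2\big)^{-1}\,\dx=1$, gives $\|F(\cdot+iy)\|_p\le\|F\|_p$ for every $y>0$, which is $(1)$. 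To obtain the majorant I would first note that $\log^+ t\le C_p(1+t^p)$ (for $p<\infty$; when $p=\infty$ one uses boundedness of $F|_{\R}$ instead) shows that $\log^+|F|$ is integrable against the Poisson kernel on $\R$; together with the bounded type and non-positive mean type hypotheses, the Poisson majorant for functions of bounded type (\cite{B2}) then yields $\log|F(x+iy)|\le \frac{y}{\pi}\int_{\R}\log|F(t)|\,\big((x-t)^2+y^2\big)^{-1}\,\dt$. Multiplying by $p$ and applying Jensen's inequality to the convex function $\exp$ against the probability measure $\frac{y}{\pi}\big((x-t)^2+y^2\big)^{-1}\,\dt$ gives the majorant; when $p=\infty$ the logarithmic inequality already gives $|F(x+iy)|\le\|F\|_\infty$ directly.

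Next I would treat $(1)\Rightarrow(2)$. Put $M=\sup_{y>0}\|F(\cdot+iy)\|_p<\infty$. Since $F$ is continuous on $\overline{\U}$, $F(x+iy)\to F(x)$ as $y\to 0^+$ for each $x$, so Fatou's lemma (for $p<\infty$) or the pointwise bound (for $p=\infty$) gives $\|F\|_p\le M<\infty$. It remains to check that $F$ is of bounded type in $\U$ with non-positive mean type; this is the assertion that the growth bound in $(1)$ places $F$ in the Smirnov class $N^+(\U)$ (part of the standard Hardy space theory recalled above, see \cite{ABR}), after which the Nevanlinna factorization writes $F$ as a product of a Blaschke product, a singular inner function with nonnegative singular measure, and an outer function, so $F$ is of bounded type with mean type equal to minus the mass the singular measure puts at $\infty$, hence $\le 0$. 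That establishes $(2)$.

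The part I expect to require the most care is verifying the hypotheses of the majorant theorem in the first direction — namely that $\log^+|F|$ is Poisson integrable on $\R$, which is precisely where the $L^p$ control of the boundary values is used — since once that is in hand the passage from $\log|F|$ to $|F|^p$ via Jensen and the interchange of integrals are routine. In the converse direction the only genuinely non-soft input is the inclusion of the relevant growth class into the Smirnov class, which I would simply take from the cited Hardy space literature.
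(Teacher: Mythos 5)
Your argument is correct. The direction $(2)\Rightarrow(1)$ is essentially the paper's: both start from the Poisson majorant $\log|F(z)|\le \frac{y}{\pi}\int_\R \log|F(t)|\,((x-t)^2+y^2)^{-1}\dt$ for functions of bounded type with non-positive mean type and then pass to $|F|$ by Jensen; the only difference is that you apply Jensen after multiplying by $p$, obtaining $|F(x+iy)|^p\le \frac{y}{\pi}\int_\R |F(t)|^p((x-t)^2+y^2)^{-1}\dt$ and finishing with Tonelli, whereas the paper applies Jensen once to get $|F|\le P[|F|]$ and then invokes Young's convolution inequality — the two are interchangeable, and your preliminary check that $\log^+|F|$ is Poisson integrable (via $\log^+ t\lesssim_p 1+t^p$) is a harmless extra safeguard. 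The direction $(1)\Rightarrow(2)$ is where you genuinely diverge: you quote the inclusion of $H^p(\U)$ into the Smirnov class $N^+(\U)$ and read off bounded type and non-positive mean type from the canonical (Blaschke--singular--outer) factorization, while the paper avoids the factorization entirely by writing $\Rep F$ as a difference of two nonnegative Poisson integrals, lifting each to an analytic function $G$, $H$ with positive real part in $\U$, and using that such functions are of bounded type with non-positive mean type (together with $v(G-H)\le\max\{v(G),v(H)\}$, since $F$ and $G-H$ differ by an imaginary constant). Your route is shorter and, as a bonus, works verbatim for $0<p<1$ where the Poisson representation of $\Rep F$ used by the paper is not available; its cost is that it imports the full Nevanlinna factorization for the half-plane, which is heavier machinery than the paper needs and is not actually contained in the reference \cite{ABR} you point to (one would cite Duren, Garnett, or Koosis instead — \cite{ABR} covers only the harmonic-function side). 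Either way the proposition stands.
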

\begin{proof}
 First we prove $(2)\implies(1)$. Since $F(z)$ is of bounded type with non-positive mean type we have (see \cite[Problem 27]{B2})
 $$
 \log |F(z)|\leq \frac{y}{\pi}\int_\R \frac{ \log |F(t)|}{(x-t)^2+y^2}\dt.
 $$
 Jensen's inequality implies that
 $$
 |F(z)|\leq \frac{y}{\pi}\int_\R \frac{ |F(t)|}{(x-t)^2+y^2}\dt.
 $$
 Applying Young's inequality for convolutions and Fatou's lemmma we conclude that
 $$
 \sup_{y>0}\|F(\cdot+iy)\|_p = \|F\|_p.
 $$
 
 For $(1)\implies(2)$ we use the fact that
 $$
  \Rep \, F(z) = \frac{y}{\pi}\int_\R \frac{ \Rep \, F(t)}{(x-t)^2+y^2}\,\dt
 $$
 in $\U$ (see \cite[Theorem 7.14]{ABR}). Write  $\Rep\, F(t)=g(t)-h(t)$, where $g(t)=\max\{\Rep\, F(t),0\}$ and  $h(t)=\max\{-\Rep\, F(t),0\}$. Let $G(z)$ and $H(z)$ be analytic functions in $\U$ such that 
 $$
\Rep \,G(z)=\frac{y}{\pi}\int_\R \frac{ g(t)}{(x-t)^2+y^2}\dt
$$ 
and 
$$
\Rep \,H(z)=\frac{y}{\pi}\int_\R \frac{ h(t)}{(x-t)^2+y^2}\dt.
$$
Since $\Rep \, H(z)>0$ and $\Rep \, G(z)>0$ in $\U$, we conclude that $G(z)$ and $H(z)$ are of bounded type with non-positive mean type (see \cite[Problem 20]{B2}). Since $F(z)$ differs from $G(z)-H(z)$ by a constant, we conclude that $F(z)$ is of bounded type with non-positive mean type.
 \end{proof}

\noindent {\it Remark:} The above proposition implies that $F\in\H^p(E)$ if and only if 
$$
\sup_{y\in\R}\|F(\cdot+iy)/E(\cdot+i|y|)\|_p<\infty,
$$
or equivalently, if $F/E$ and $F^*/E$ belong to $H^p(\U)$. It can be proven, using the completeness of Hardy spaces and the reproducing kernel property (\ref{repord-kern-id}) that the spaces $\H^p(E)$ are Banach spaces for $p\geq 1$.

\smallskip

The next three lemmas are technical tools needed for the full proof of Theorem \ref{gen-int-form-der}.
\begin{lemma}\label{concatenation-lemma}
Let $E(z)$ be a Hermite-Biehler function such that $\p'(x)$ is bounded. Then $\H^p(E)\subset\H^q(E)$ continuously if $1\leq p<q<\infty$.
\end{lemma}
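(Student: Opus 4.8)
The plan is to route the embedding through $\H^\infty(E)$. First I would show that the hypothesis $M:=\sup_{x\in\R}\p'(x)<\infty$ forces $\H^p(E)\subset\H^\infty(E)$ continuously for every $p\in[1,\infty)$; granting this, the lemma follows by a one–line $L^p$ interpolation. Indeed the bounded type conditions in the definition of $\H^q(E)$ coincide with those for $\H^p(E)$, hence are automatically satisfied by any $F\in\H^p(E)$, and for $p<q<\infty$
\[
\|F\|_{E,q}^q=\int_\R|F(x)/E(x)|^q\,\dx\leq \|F\|_{E,\infty}^{q-p}\int_\R|F(x)/E(x)|^p\,\dx=\|F\|_{E,\infty}^{q-p}\,\|F\|_{E,p}^p\<\|F\|_{E,p}^q,
\]
so $F\in\H^q(E)$ with $\|F\|_{E,q}\<\|F\|_{E,p}$.

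The core estimate is therefore the uniform bound $\sup_{x\in\R}\|K(x,\cdot)\|_{E,p'}/|E(x)|<\infty$, where $1/p+1/p'=1$: once this is available, \eqref{swartz-ineq} applied at real points gives $|F(x)/E(x)|\<\|F\|_{E,p}$ for every real $x$ with $E(x)\neq0$, and since the real zeros of $E$ are discrete this yields $\|F\|_{E,\infty}\<\|F\|_{E,p}$. To prove the core estimate I would combine the first expression for the kernel in \eqref{reprod-kern} with the identity $E^*(x)/E(x)=e^{2i\p(x)}$ on $\R$ (immediate from $e^{i\p(x)}E(x)\in\R$), which gives, for real $x\neq t$,
\[
\left|\frac{K(x,t)}{E(x)E(t)}\right|=\frac{|\sin(\p(x)-\p(t))|}{\pi\,|x-t|}\leq \frac{\min\{1,\,|\p(x)-\p(t)|\}}{\pi\,|x-t|}\leq\frac{\min\{1,\,M|x-t|\}}{\pi\,|x-t|},
\]
where the last step uses $0<\p'\leq M$. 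The right side is $\leq M/\pi$ when $|x-t|\leq 1/M$ and $\leq(\pi|x-t|)^{-1}$ when $|x-t|>1/M$; splitting the integral $\int_\R|K(x,t)/(E(x)E(t))|^{p'}\,\dt$ at $|x-t|=1/M$ bounds it, uniformly in $x$, by a finite constant depending only on $M$ and $p'$ (for $p'=\infty$, i.e. $p=1$, one simply reads off $\|K(x,\cdot)\|_{E,\infty}\leq (M/\pi)|E(x)|$). Dividing by $|E(x)|^{p'}$ and taking $p'$-th roots gives exactly the required uniform bound.

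Assembling the pieces: \eqref{swartz-ineq} gives $|F(x)|\leq\|F\|_{E,p}\|K(x,\cdot)\|_{E,p'}$, and dividing by $|E(x)|$, invoking the uniform bound, and taking the supremum over $\R$ minus the zero set of $E$ proves $\H^p(E)\subset\H^\infty(E)$ continuously with constant depending only on $p$ and $M$; the interpolation step of the first paragraph then finishes the proof. The one place requiring a little care is the behavior at real zeros of $E$: these are isolated and the pointwise bound holds on their complement, so $F/E$ is locally bounded there and the singularities are removable, hence they contribute nothing to the $L^q$-integrals. I expect the only genuine work to be the clean packaging of the kernel estimate; everything else is routine.
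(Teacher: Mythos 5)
Your proof is correct, and it takes a genuinely different route from the paper's on the half of the argument that matters. Both proofs reduce the lemma to the continuous embedding $\H^p(E)\subset\H^\infty(E)$ followed by the trivial interpolation $\|F\|_{E,q}^q\leq\|F\|_{E,\infty}^{q-p}\|F\|_{E,p}^p$, and both obtain that embedding from \eqref{swartz-ineq} plus a uniform bound on $\|K(x,\cdot)\|_{E,p'}/|E(x)|$. The difference is how that kernel bound is produced. The paper only controls $\|K(t,\cdot)\|_{E,q}$ for $q\in[2,\infty]$, by interpolating between the exact identity $\|K(t,\cdot)\|_E^2=K(t,t)=\p'(t)|E(t)|^2/\pi$ and the pointwise bound coming from $K(t,x)^2\leq K(t,t)K(x,x)$; this settles $p\in[1,2]$, and the remaining range $p>2$ is then imported via the duality $\H^p(E)'\cong\H^{p'}(E)$ (Baranov, Cohn), with continuity extracted from the Closed Graph Theorem. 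You instead prove the kernel bound for \emph{every} $p'\in(1,\infty]$ at once, using the phase-function identity $|K(x,t)|=|E(x)||E(t)|\,|\sin(\p(x)-\p(t))|/(\pi|x-t|)$ on $\R$ (which is correct: write $E(x)=r(x)e^{-i\p(x)}$ with $r$ real and expand \eqref{reprod-kern}) together with the Lipschitz bound $|\p(x)-\p(t)|\leq M|x-t|$, and then split the $\dt$-integral at $|x-t|=1/M$; the tail converges precisely because $p'>1$. This makes the proof self-contained — no duality theorem, no Closed Graph Theorem, and explicit constants depending only on $M$ and $p$ — at the cost of using the reproducing identity \eqref{repord-kern-id} for general $p\in[1,\infty)$, which the paper asserts and uses elsewhere, so you are entitled to it. Your handling of the real zeros of $E$ (isolated, measure zero, uniform bound off the zero set) is adequate.
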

\begin{proof}
First we show that $\H^p(E)\subset \H^\infty(E)$ if $p\in[1,2]$.

\smallskip

Recall that $\p'(x)=\pi K(x,x)/|E(x)|^2$ and denote by $C$ its supremum. By the reproducing kernel property  we obtain
$$
\|K(t,\cdot)\|^2_{E}=K(t,t)\leq C|E(t)|^2/\pi
$$
for all real $t$. In the same way, noting that
$$
\|K(t,\cdot)\|^2_{E,\infty}=\sup_{x\in\R}{\bigg|\frac{K(t,x)}{E(x)}\bigg|^2},
$$
and $K(t,x)^2 \leq K(x,x)K(t,t)$, we conclude that 
$$
\|K(t,\cdot)\|^2_{E,\infty} \leq C^2|E(t)|^2/\pi^2.
$$

\noindent Hence, we obtain that for all $q\in[2,\infty]$
\begin{equation}\label{norm-kern-bound}
\|K(t,\cdot)\|_{E,q} \leq (C/\pi)^{1-1/q}|E(t)|.
\end{equation}

\noindent If $p\in[1,2]$ and $F\in\H^p(E)$, then for all $t\in\R$
$$
|F(t)/E(t)| \leq \|F\|_{E,p}\|K(t,\cdot)\|_{E,p'}/|E(t)| \leq \|F\|_{E,p}(C/\pi)^{1/p}.
$$
This implies the proposed inclusions for $1\leq p<q\leq \infty$ and $p\leq 2$. By \cite[Proposition 1.1]{Bar2} and {\cite[Lemma 4.2]{Co}} the dual space of $\H^p(E)$ can be identified with $\H^{p'}(E)$ if $1<p<\infty$. This implies the remaining inclusions. Since convergence in the space implies convergence on compacts sets of $\C$ we conclude that the identity map from $\H^p(E)$ to $\H^q(E)$ is closed, hence continuous by the Closed Graph Theorem.
\end{proof}

\begin{lemma}\label{tech-lemma-density}
Let $E(z)$ be a Hermite-Biehler function such that $\p'(x)$ is bounded. Let $\alpha\in\R$ be such that $B_\alpha\notin\H(E)$. Then the linear span of the following set of functions 
$$
\{B_\alpha(z)/(z-t)\}_{t\in\Tau(\alpha)}
$$
is dense in $\H^p(E)$ for every $p\in[2,\infty)$.
\end{lemma}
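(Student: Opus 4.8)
The plan is to argue by duality, reducing density in $\H^p(E)$ to a statement in the Hilbert space $\H(E)=\H^2(E)$ that follows from de Branges' basis theorem. For $t\in\Tau(\alpha)$ we have $B_\alpha(t)=0$ and $A_\alpha(t)\neq 0$ (since $\p'(t)=B_\alpha'(t)/A_\alpha(t)>0$ by \eqref{phi-ident}), so by \eqref{reprod-kern} the function $B_\alpha(z)/(z-t)$ equals the nonzero real-scalar multiple $\tfrac{\pi}{A_\alpha(t)}K(t,z)$ of the reproducing kernel at $t$. Hence the linear span $\mathcal{M}$ of $\{B_\alpha(z)/(z-t)\}_{t\in\Tau(\alpha)}$ coincides with the linear span of $\{K(t,\cdot)\}_{t\in\Tau(\alpha)}$, and in particular $\mathcal{M}\subset\H^q(E)$ for every $q\in(1,\infty]$.

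Fix $p\in[2,\infty)$ and let $p'\in(1,2]$ be the conjugate exponent. By \cite[Proposition 1.1]{Bar2} and \cite[Lemma 4.2]{Co} the dual of $\H^p(E)$ is identified with $\H^{p'}(E)$ via the pairing $\langle F,G\rangle_E=\int_\R F(x)\ov{G(x)}\,|E(x)|^{-2}\dx$, which is finite for $F\in\H^p(E)$ and $G\in\H^{p'}(E)$ by H\"older's inequality. By the Hahn--Banach theorem it therefore suffices to show that every $G\in\H^{p'}(E)$ with $\langle K(t,\cdot),G\rangle_E=0$ for all $t\in\Tau(\alpha)$ vanishes identically. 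Since $K(t,\cdot)\in\H^p(E)$, the reproducing identity \eqref{repord-kern-id} applied to $G\in\H^{p'}(E)$ gives $\langle K(t,\cdot),G\rangle_E=\ov{\langle G,K(t,\cdot)\rangle_E}=\ov{G(t)}$, so this annihilation condition is equivalent to $G(t)=0$ for all $t\in\Tau(\alpha)$.

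It remains to see that a function $G\in\H^{p'}(E)$ vanishing on all of $\Tau(\alpha)$ is zero, and this is where the boundedness of $\p'(x)$ enters. Since $p'\leq 2$, Lemma \ref{concatenation-lemma} yields $\H^{p'}(E)\subset\H^2(E)=\H(E)$ continuously, so $G\in\H(E)$. As $\Tau(\alpha)$ is the zero set of $B_\alpha/E$, the condition $G(t)=0$ for $t\in\Tau(\alpha)$ says precisely that $G$ is orthogonal in $\H(E)$ to every $K(t,\cdot)$, $t\in\Tau(\alpha)$. By \cite[Theorem 22]{B2} the system $\{K(t,\cdot)\}_{t\in\Tau(\alpha)}$ (equivalently $\{B_\alpha(\cdot)/(\cdot-t)\}_{t\in\Tau(\alpha)}$) is orthogonal in $\H(E)$, and the hypothesis $B_\alpha\notin\H(E)$ forces equality in \eqref{norm-basis} for every element of $\H(E)$; that is, the system is an orthogonal basis of $\H(E)$. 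Therefore $G=0$, and $\mathcal{M}$ is dense in $\H^p(E)$. (For $p=2$ one may bypass the duality step altogether: $\mathcal{M}$ is the linear span of an orthogonal basis of $\H(E)$, hence dense.)

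The one genuinely delicate point is the compatibility of the $\H^p$--$\H^{p'}$ pairing with the $\H(E)$ inner product, needed so that \eqref{repord-kern-id} may legitimately be applied to $G\in\H^{p'}(E)$ tested against $K(t,\cdot)\in\H^p(E)$; granting this (it is part of the cited duality statement), the remaining ingredients are routine appeals to Lemma \ref{concatenation-lemma} for the inclusion $\H^{p'}(E)\subset\H(E)$ and to \cite[Theorem 22]{B2} (together with $B_\alpha\notin\H(E)$) to upgrade the orthogonal system to a basis, and I foresee no further obstruction.
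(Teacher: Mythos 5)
Your proof is correct and follows essentially the same route as the paper's: Hahn--Banach plus the duality $\H^p(E)'=\H^{p'}(E)$, the identification of the annihilating functional with an entire function vanishing on $\Tau(\alpha)$ via the reproducing kernel, the inclusion $\H^{p'}(E)\subset\H(E)$ from Lemma \ref{concatenation-lemma}, and the orthogonal-basis property from \cite[Theorem 22]{B2} together with $B_\alpha\notin\H(E)$. The only difference is that you spell out the conjugation in the pairing and the scalar $\pi/A_\alpha(t)$ explicitly, which the paper leaves implicit.
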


\begin{proof}
\noindent Denote by $S$ the closure of this span in $\H^p(E)$. Suppose by contradiction that $S\neq \H^p(E)$. By the Hahn-Banach Theorem there exists a non-zero functional $\Lambda\in\H^p(E)'$ that vanishes on $S$. Since $\H^p(E)'=\H^{p'}(E)$, we conclude that $\Lambda=\Lambda(z)$ is an entire function that belongs to $\H^{p'}(E)$ and
$$
\Lambda(t)=\langle \Lambda,K(t,\cdot)\rangle_E =0
$$
for every $t\in\Tau(\alpha)$, since $K(t,z)$ is a multiple of $B_\alpha(z)/(z-t)$ for every $t\in\Tau(\alpha)$. 
By  Lemma \ref{concatenation-lemma}, $\H^{p'}(E)\subset \H(E)$. Since $B_\alpha\notin\H(E)$, the set $\{B_\alpha(z)/(z-t)\}_{t\in\Tau(\alpha)}$ forms an orthogonal basis of $\H(E)$ and we conclude by (\ref{norm-basis}) that $\Lambda\equiv 0$, a contradiction.
\end{proof}

\begin{lemma}\label{tech-lemma-p>2}
Let $E(z)$ be a Hermite-Biehler function such that $\p'(x)\leq C$ for every real $x$. Then for every $p\in [1,2]$, every real number $\alpha$ and every $F\in\H^p(E)$ we have
\begin{equation}\label{Hp-sum-strange}
\sum_{t\in\Tau(\alpha)} \frac{|F(t)|}{(1+|t|)K(t,t)^{1/2}} \<_{C,p} \|F\|_{E,p}.
\end{equation}
This inequality is also valid for $p\in(2,\infty)$ if we additionally assume that $v(E^*/E)<0$.

\end{lemma}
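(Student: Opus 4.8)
The plan is to split according to the range of $p$. For $p\in[1,2]$ the bound \eqref{Hp-sum-strange} will follow quickly from \eqref{norm-basis} together with the embedding of Lemma~\ref{concatenation-lemma}, while for $p\in(2,\infty)$ I would argue by duality and reduce everything to a single explicit scalar integral, the hypothesis $v(E^*/E)<0$ entering only through a lower bound on $\p'$. I use throughout that, since $\p$ is increasing with $\p(t')-\p(t)=\pi$ for consecutive points $t<t'$ of $\Tau(\alpha)$ and $\p'\le C$, the set $\Tau(\alpha)$ is $(\pi/C)$-separated; hence $\#\{t\in\Tau(\alpha):|t|\le R\}\<_C 1+R$ and $\sum_{t\in\Tau(\alpha)}(1+|t|)^{-q}\<_{C,q}1$ for every $q>1$. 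If $\Tau(\alpha)$ is finite the lemma is trivial, so assume it infinite.

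For $p\in[1,2]$, Lemma~\ref{concatenation-lemma} gives $\H^p(E)\subseteq\H^2(E)$ continuously, so $\|F\|_E\<_{C,p}\|F\|_{E,p}$. Applying the Cauchy--Schwarz inequality, then \eqref{norm-basis}, then the separation remark above,
\begin{equation*}
\sum_{t\in\Tau(\alpha)}\frac{|F(t)|}{(1+|t|)K(t,t)^{1/2}}\le\Bigl(\sum_{t\in\Tau(\alpha)}\frac{|F(t)|^2}{K(t,t)}\Bigr)^{1/2}\Bigl(\sum_{t\in\Tau(\alpha)}\frac{1}{(1+|t|)^2}\Bigr)^{1/2}\<_C\|F\|_E\<_{C,p}\|F\|_{E,p}.
\end{equation*}

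For $p\in(2,\infty)$, put $p'=p/(p-1)\in(1,2)$. Since $\H^p(E)'=\H^{p'}(E)$ (recalled in the proof of Lemma~\ref{concatenation-lemma}), each $K(t,\cdot)\in\H^{p'}(E)$, and $\langle F,K(t,\cdot)\rangle_E=F(t)$, the left-hand side of \eqref{Hp-sum-strange} is the supremum of $|\langle F,\Phi_S\rangle_E|$ over finite $S\subseteq\Tau(\alpha)$ and unit-modulus coefficients $\{\sigma_t\}_{t\in S}$, where $\Phi_S=\sum_{t\in S}\sigma_t\,K(t,\cdot)/((1+|t|)K(t,t)^{1/2})\in\H^{p'}(E)$; so it suffices to bound $\|\Phi_S\|_{E,p'}$ uniformly in $S$ and $\{\sigma_t\}$. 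Using $K(t,z)=A_\alpha(t)B_\alpha(z)/(\pi(z-t))$ for $t\in\Tau(\alpha)$, the identity $|B_\alpha(x)/E(x)|=|\sin(\p(x)-\alpha)|$ on $\R$, and $|A_\alpha(t)|\,K(t,t)^{-1/2}=(\pi/\p'(t))^{1/2}$ (all from \eqref{phi-ident} and the definitions in \S\ref{deBranges}), one gets $|\Phi_S(x)/E(x)|\le\pi^{-1/2}|\sin(\p(x)-\alpha)|\sum_{t\in S}((1+|t|)\p'(t)^{1/2}|x-t|)^{-1}$. The hypothesis $v(E^*/E)<0$ now supplies $\delta:=\inf_{\R}\p'>0$: by the canonical factorization of the inner function $E^*/E=e^{2iaz}\Theta$ with $a=-v(E^*/E)/2>0$ and $\Theta$ inner of zero mean type, one has $2\p(x)=2ax+\arg\Theta(x)$ with $(\arg\Theta)'\ge 0$, whence $\p'\ge a$. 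Then $\psi:=\p^{-1}$ satisfies $1/C\le\psi'\le 1/\delta$, and substituting $u=\p(x)$, bounding $|x-t|\ge|\p(x)-\p(t)|/C$, and using $1+|t|\simeq 1+|n_t|$ and $\p'(t)\simeq 1$ (where $\p(t)=\alpha+n_t\pi$ with the $n_t\in\Z$ distinct), one reduces to
\begin{equation*}
\|\Phi_S\|_{E,p'}^{p'}\;\<\;\int_{\R}|\sin u|^{p'}\Bigl(\sum_{n\in\Z}\frac{1}{(1+|n|)\,|u-n\pi|}\Bigr)^{p'}\du\,,
\end{equation*}
which is finite for $p'\in(1,2)$: near each $u=m\pi$ the factor $|\sin u|$ absorbs the singularity of the $n=m$ term, and on $(m\pi,(m+1)\pi)$ the inner sum is $\<\log(2+|m|)/(1+|m|)$, so the total is $\<\sum_m(\log(2+|m|)/(1+|m|))^{p'}<\infty$ since $p'>1$. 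Letting $S\uparrow\Tau(\alpha)$ completes this case.

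I expect the $p>2$ case to be the main obstacle, for two reasons. First, one must extract $\delta=\inf_{\R}\p'>0$ from $v(E^*/E)<0$ through the inner--outer factorization of the inner function $E^*/E$, identifying the mean type of the residual inner factor and invoking the monotonicity of the argument of an inner function along $\R$. Second, one must justify that the change of variables $u=\p(x)$ really does control $\|\Phi_S\|_{E,p'}$ uniformly in $S$: the mechanism is that the weight $\p'(t)^{-1/2}$ in $\Phi_S$, unbounded in general, is exactly compensated by the bi-Lipschitz constant of $\psi$ once $\p'$ is bounded below, after which the only remaining obstruction is the logarithmic overlap of the bumps of $\Phi_S$ near consecutive points of $\Tau(\alpha)$ --- harmless precisely because $p'>1$.
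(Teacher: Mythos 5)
Your treatment of $p\in[1,2]$ coincides with the paper's: embed $\H^p(E)\subset\H(E)$ by Lemma \ref{concatenation-lemma}, then apply Cauchy--Schwarz, \eqref{norm-basis}, and the $\pi/C$-separation of $\Tau(\alpha)$. For $p\in(2,\infty)$ you take a genuinely different, and correct, route. The paper uses $v(E^*/E)=-2a<0$ to build a multiplier: fixing $\nu\in(-1/p,0)$ it sets $\LL F(z)=e^{-iaz}E_\nu(az)E^*(z)F(z)$ with $E_\nu$ the homogeneous (Bessel) structure function of Section \ref{hom-space}, shows via mean-type bookkeeping and H\"older's inequality that $\LL:\H^p(E)\to\H(E^2)$ is bounded, and then runs Cauchy--Schwarz against the $\ell^2$ inequality \eqref{norm-basis} in the Hilbert space $\H(E^2)$, the point being that $|E_\nu(at)|^{-2}(1+|t|)^{-2}\simeq |t|^{2\nu-1}$ is summable over the separated set $\Tau(\alpha)$. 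You instead dualize against the kernel combinations $\Phi_S\in\H^{p'}(E)$ and estimate $\|\Phi_S\|_{E,p'}$ by hand, using the hypothesis only through the lower bound $\p'\ge -v(E^*/E)/2>0$ (your factorization argument for this is sound, since the argument of the residual inner factor is non-decreasing on $\R$), which makes $u=\p(x)$ bi-Lipschitz and reduces everything to the displayed scalar integral; that integral converges exactly because $p'>1$, and the kernel identities $K(t,z)=A_\alpha(t)B_\alpha(z)/\pi(z-t)$, $A_\alpha(t)^2/K(t,t)=\pi/\p'(t)$, $|B_\alpha/E|=|\sin(\p-\alpha)|$ and the convolution bound $\sum_n\big((1+|n|)(1+|m-n|)\big)^{-1}\< \log(2+|m|)/(1+|m|)$ that you rely on all check out. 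Comparing the two: the paper's proof is shorter given the homogeneous-space machinery it needs anyway in Section \ref{app}, while yours is self-contained (no Bessel functions), makes the role of $p'>1$ transparent, and actually establishes the lemma under the a priori weaker hypothesis $\inf_\R\p'>0$, in the spirit of the substitute condition \eqref{dropped-cond}. In a full write-up you should spell out the monotonicity of the argument of the inner factor and note that uniformity in $S$ is automatic because enlarging $S$ to all of $\Tau(\alpha)$ only increases your majorant.
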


\begin{proof}
By hypothesis, if $t<t'$ are two consecutive points in $\Tau(\alpha)$, we have
$$
\pi = (t'-t)\p'(r) 
$$
for some $r\in(t,t')$. We conclude that the points $\Tau(\alpha)$ are $\pi/C$-separated.
We divide the proof in steps.

\smallskip

\noindent {\it Step 1.} The inequality (\ref{Hp-sum-strange}) is valid for $p\in[1,2]$.
\smallskip

By Lemma \ref{concatenation-lemma} we have $\H^p(E)\subset\H(E)$ continuously, thus the case $p<2$ follows directly from the case $p=2$. Let $F\in\H(E)$, by the Cauchy-Schwarz inequality we have
$$
\sum_{t\in\Tau(\alpha)} \frac{|F(t)|}{(1+|t|)K(t,t)^{1/2}} \leq \bigg(\sum_{t\in\Tau(\alpha)} \frac{|F(t)|^2}{K(t,t)} \bigg)^{1/2}\bigg(\sum_{t\in\Tau(\alpha)} (1+|t|)^{-2}\bigg)^{1/2} \<_{C} \|F\|_{E},
$$
where the last inequality is due to (\ref{norm-basis}) and the separability of $\Tau(\alpha)$.
\smallskip

\noindent  {\it Step 2.} The case $p>2$.
\smallskip

By hypothesis, let $v(E^*/E)=-2a<0$. Fix a real number $\nu$ such that $\nu\in(-1/p,0)$.
Let $E_\nu(z)$ be the function defined in Section \ref{hom-space} and  define the operator $\LL :\H^p(E)\to\H(E^2)$ by $\LL F(z)=e^{-iaz}E_\nu(az)E^*(z)F(z)$. By the properties described in Section \ref{hom-space} we have
\begin{enumerate}
\item[(i)] $v(E_\nu^*)\leq v(E_\nu)=\tau(E_\nu)=1$;
\item[(ii)] $|E_\nu(t)| \simeq 1/|t|^{\nu+1/2} \,\,\,$, for $|t|\geq 1$.
\end{enumerate}
Hence, if $G(z)=\LL F(z)$ we obtain
$$
v(G/E^2)= v(F/E) + v(E^*/E)+ v(E_\nu(az))+ v(e^{-iaz})\leq 0-2a +a  +a =0
$$
and 
$$
v(G^*/E^2)=v(F^*/E) + v(E_\nu^*(az))+ v(e^{iaz})\leq 0+ a - a =0.
$$
We also have
\begin{equation}\label{equa-4}
\int_\R|G(t)/E(t)^2|^2\,\dt \leq  \bigg(\int_\R|E_\nu(at)|^q\dt\bigg)^{2/q} \|F\|^2_{E,p}\<_{a,p}\|F\|^2_{E,p},
\end{equation}
where $1/2=1/q+1/p$. Note that $q>2$ and $q(\nu+1/2)>1$. We conclude that the operator $\LL$ is well-defined and continuous. Denoting by $K_2(w,z)$ the reproducing kernel of $\H(E^2)$ and $K(w,z)$ the reproducing kernel of $\H(E)$ we obtain $K_2(t,t)=2|E(t)|^2K(t,t)$. We have
\begin{align*}
\sum_{t\in\Tau(\alpha)} \frac{|F(t)|}{(1+|t|)K(t,t)^{1/2}}&=\sum_{t\in\Tau(\alpha)} \frac{\sqrt{2}|G(t)|}{|E_\nu(at)|(1+|t|)K_2(t,t)^{1/2}} \\ &\leq    
 \bigg(\sum_{t\in\Tau(\alpha)} \frac{2|G(t)|^2}{K_2(t,t)} \bigg)^{1/2}
\bigg(\sum_{t\in\Tau(\alpha)} \frac{1}{|E_\nu(at)|^2(1+|t|)^2)} \bigg)^{1/2} \\ 
&\<_{C,a,p} \|G\|_{E^2} \\
& \<_{C, a,p} \|F\|_{E,p}\,, 
\end{align*}
where the first inequality is due to the Cauchy-Schwarz inequality, the second inequality due to (\ref{norm-basis}) and the last one due to (\ref{equa-4}). 
\end{proof}


\subsection{Proof of Theorem \ref{gen-int-form-der} - The general case}
\begin{proof}[Step 1] The case $p\in[1,2]$.

\smallskip

As in the proof for the case $p=2$ we can assume that $\alpha=-\pi/2$, which implies $B_\alpha(z)=A(z)$.
Since the space $\H(E^2)$ is closed by differentiation we can apply Proposition \ref{diff-lemma} to conclude that $\p'(x)$ is bounded. By Lemma \ref{concatenation-lemma} we have $\H^p(E^2)\subset \H(E^2)$. Hence, formula (\ref{gen-int-form}) holds for every $F\in\H^p(E^2)$, where the convergence is taken in $\H(E^2)$.
\smallskip

\noindent {\it Step 2.} Preparation for the case $p\in(2,\infty)$.

\smallskip

For $2<p< \infty$ note that if $F\in\H^p(E^2)$
 then $G_w(z)=[F(z)A(w)^2-A(z)^2F(w)]/(z-w)$ belongs to $\H(E^2)$ for every $w\in\C$. By Step 1, we can apply formula (\ref{gen-int-form}) to obtain
 \begin{align}\label{equa-2}
 \begin{split}
 \frac{F(z)}{A(z)^2}-\frac{F(w)}{A(w)^2} =& \sum_n\bigg\{\frac{F(s_n)}{A'(s_n)^2(z-s_n)^2}+ \frac{F'(s_n)}{A'(s_n)^2(z-s_n)}-\frac{F(s_n)A''(s_n)}{A'(s_n)^3(z-s_n)} \\ 
 &\ \ \ \ \ \ \ \ - \frac{F(s_n)}{A'(s_n)^2(w-s_n)^2}- 
\frac{F'(s_n)}{A'(s_n)^2(w-s_n)}+\frac{F(s_n)A''(s_n)}{A'(s_n)^3(w-s_n)}  \bigg\}
\end{split}
 \end{align}
for every $w,z\in\C$. Note that, for every $w\in\C\setminus\{s_n\}$ the sum converges uniformly on compact sets of $\C\setminus\{s_n\}$. If we denote by $K_2(w,z)$ the reproducing kernel of $\H(E^2)$ and use the hypothesis that $\p'(s_n)=|A'(s_n)/B(s_n)|\geq \delta$ for every $n$, together with (\ref{K2-K}), we obtain
\begin{equation}\label{equa-12}
K_2(s_n,s_n)^{1/2}\<_\delta A'(s_n)^2 \,\,\,\, \mbox{ for every } n.
\end{equation}

Since $\H(E^2)$ is closed by differentiation and $v(E^*/E)<0$, we can apply \cite[Theorem A]{Bar2} to conclude that $E'/E\in  H^\infty(\U)$. Again by \cite[Theorem A]{Bar2} the space $\H^p(E^2)$ is closed by differentiation. Since $\p'(x)$ is bounded, we can apply Lemma \ref{tech-lemma-p>2} together with estimate (\ref{equa-12}), to obtain
 \begin{equation}\label{equa-3}
 \sum_n \frac{|F(s_n)|+|F'(s_n)|}{(1+|s_n|)A'(s_n)^2} <\infty  
 \end{equation}
for every $F\in\H^p(E^2)$. Since $|A'(s_n)/B(s_n)|\geq \delta$ for every $n$ we can apply Proposition \ref{diff-lemma} item (4) to conclude that  $|A''(s_n)/A'(s_n)|\<_{D,\delta} 1$ for all $n$. These facts imply that the series
 $$
  \sum_n\frac{F(s_n)}{A'(s_n)^2(z-s_n)^2}+ \frac{F'(s_n)}{A'(s_n)^2(z-s_n)}-\frac{F(s_n)A''(s_n)}{A'(s_n)^3(z-s_n)}
 $$
converges uniformly on compact sets contained in $\C\setminus\{s_n\}$.

By (\ref{equa-2}) we deduce that 
 \begin{equation}\label{equa-5}
 F(z)=c(F)A(z)^2+A(z)^2 \sum_n\frac{F(s_n)}{A'(s_n)^2(z-s_n)^2}+ \frac{F'(s_n)}{A'(s_n)^2(z-s_n)}-\frac{F(s_n)A''(s_n)}{A'(s_n)^3(z-s_n)}
 \end{equation}
 for some complex number $c(F)$. 
 
\smallskip
 
\noindent {\it Step 3.} Finishing the proof for $p\in(2,\infty)$.
 
\smallskip
 
To finish the proof we will show that $c(F)$ is a continuous linear functional over $\H^p(E^2)$ that vanishes in a dense set of functions, hence it is identically zero. By (\ref{swartz-ineq}) we have $|F(i)|\< \|F\|_{E^2,p}$ and  by Lemma \ref{tech-lemma-p>2} we have
$$
 \bigg|\sum_n\frac{F(s_n)}{A'(s_n)^2(i-s_n)^2}+ \frac{F'(s_n)}{A'(s_n)^2(i-s_n)}-\frac{F(s_n)A''(s_n)}{A'(s_n)^3(i-s_n)}\bigg| \< \|F\|_{E^2,p}.
 $$
By (\ref{equa-5}) we conclude that $|c(F)|\< \|F\|_{E^2,p}$, hence $c(\cdot)$ is a bounded linear functional over $\H^p(E^2)$. Since $AB\notin \H(E^2)$ we can apply Lemma \ref{tech-lemma-density} to deduce that the set of functions
$$
\{A(z)B(z)/(z-t_n)\} \cup \{A(z)B(z)/(z-s_n)\}
$$
is dense in  $\H^p(E^2)$. Using formulas (\ref{B/A-form-0}) - (\ref{B/A-form-2}) we see that $c(F)=0$ for any of the above functions, hence $c(\cdot)\equiv 0$. This concludes the proof.
\end{proof}

\noindent {\it Remark:} Note that, by the previous proof for the case $p\in(2,\infty)$, the additional assumption $v(E^*/E)<0$ can be replaced by the following assumption
\begin{equation}\label{dropped-cond}
\sum_{t\in\Tau(\alpha)}\frac{|F(t)|+|F'(t)|}{(1+|t|)K_2(t,t)^{1/2}} \< \|F\|_{E^2,p} \,\,\,\, \mbox{ for every } \,\,\, F\in\H^p(E^2).
\end{equation}
In the next section we shall use condition (\ref{dropped-cond}) to obtain the Theorem \ref{gen-int-form-der} in the range $p\in(2,\infty)$ for homogeneous spaces of entire functions.

\section{Applications}\label{app}

\subsection{Homogeneous de Branges Spaces}\label{hom-space}
There is a variety of examples of de Branges spaces \cite[Chapter 3]{B2} for which Theorem \ref{gen-int-form-der} may be applied. A basic example would be the classical Paley-Wiener space $\H(e^{-i\tau z})$  which gives us the previous results obtained by J. Vaaler in \cite[Theorem 9]{V}. Another interesting family arises in the discussion of \cite[Section 5]{HV}. In the terminology of de Branges \cite[Section 50]{B2}, these are examples of homogeneous spaces, and we briefly review their construction below (see also \cite{B}).

\smallskip

Let $\nu > -1$ be a parameter and consider the real entire functions $A_\nu(z)$ and $B_\nu(z)$ given by
\begin{equation}\label{A-id}
A_{\nu}(z) = \sum_{n=0}^{\infty} \frac{(-1)^n \big(\tfrac12 z\big)^{2n}}{n!(\nu +1)(\nu +2)...(\nu+n)} = \Gamma(\nu +1) \left(\tfrac12 z\right)^{-\nu} J_{\nu}(z)
\end{equation}
and
\begin{equation}\label{B-id}
B_{\nu}(z) = \sum_{n=0}^{\infty} \frac{(-1)^n \big(\tfrac12 z\big)^{2n+1}}{n!(\nu +1)(\nu +2)...(\nu+n+1)} = \Gamma(\nu +1) \left(\tfrac12 z\right)^{-\nu+1} J_{\nu+1}(z),
\end{equation}
where $J_\nu(z)$ denotes the classical Bessel function of the first kind given by
\begin{equation*}\label{bessel-func}
J_\nu(z)=\sum_{n\geq 0} \frac{(-1)^n(\hf z)^{2n+\nu}}{n!\,\Gamma(\nu+n+1)}.
\end{equation*}
If we write $z=x+iy$ then, for every $\nu>-1$, we have
\begin{equation}\label{J-dca-real}
J_\nu(z)=\sqrt{\frac{2}{\pi z}}\, \bigg(\!\cos(z-\nu\pi/2-\pi/4)+e^{|y|}O(1/|z|)\bigg)
\end{equation}
for $x>0$. This estimate can be found in \cite[Section 7.21]{Wat}.

\smallskip

If we write
\begin{equation*}
E_\nu(z) = A_\nu(z) - iB_\nu(z),
\end{equation*}
then the function $E_\nu(z)$ is a Hermite-Biehler function with no real zeros. Moreover, it is of bounded type in $\U$ and of exponential type in $\C$, with $v(E_\nu)=\tau(E_\nu) = 1$. Observe that when $\nu = -1/2$ we have simply $A_{-1/2}(z) = \cos z$ and $B_{-1/2}(z) = \sin z$. 

\smallskip

These special functions also satisfy the following differential equations
\begin{align}\label{nu-diff-eqn}
\begin{split}
A'_\nu(z)&=-B_\nu(z) \\ B'_\nu(z)&=A_\nu(z)-(2\nu+1)B_\nu(z)/z.
\end{split}
\end{align}
By (\ref{A-id}), (\ref{B-id}) and (\ref{J-dca-real}) we have
\begin{equation}\label{homog_eq1}
 |E_{\nu}(x)|^{-2} \simeq_\nu |x|^{2\nu+1}
\end{equation}
and
\begin{equation}\label{AB-est}
|x|^{2\nu+1}|A_\nu(x)B_\nu(x)|=C_\nu\big(|\sin(2x-\nu\pi)|+O(1/|x|)\big)
\end{equation} \\
for $|x|\geq 1$. We conclude that $A_\nu B_\nu\notin\H(E^2_\nu)$. Also, by (\ref{nu-diff-eqn}) we have 
\begin{equation}\label{E'/E-bound}
i\frac{E_\nu'(z)}{E_\nu(z)}=1 - (2\nu+1)\frac{B_\nu(z)}{zE_\nu(z)}.
\end{equation}
for all real $z\in\U$. Hence $[E'_\nu(z)/E_\nu(z)]\in H^\infty(\U)$.

\smallskip

Denoting by $\p_\nu(z)$ the phase function associated with $E_\nu(z)$  and using the fact that $\p_\nu'(t)=\Rep \, [iE_\nu'(t)/E_\nu(t)]$ for all real $t$, we can use (\ref{E'/E-bound}) to obtain
$$
\p'_\nu(t)=1-\frac{(2\nu+1) A_\nu(t)B_\nu(t)}{t|E_\nu(t)|^2}.
$$
Hence,
\begin{equation}\label{phi'-bound}
 \p'_\nu(t) \simeq_\nu 1 \,\,\,\, \mbox{for all real } t.
\end{equation}
For each $F\in\H(E_\nu)$ we have the remarkable identity 
\begin{align}\label{Intro_homog_eq2}
\int_{-\infty}^\infty |F(x)|^{2}\,|E_{\nu}(x)|^{-2}\, \dx = c_\nu \int_{-\infty}^\infty |F(x)|^2 \,|x|^{2\nu+1} \,\dx\,,
\end{align}
with $c_\nu = \pi \,2^{-2\nu-1}\, \Gamma(\nu+1)^{-2}$. Using the fact that $E_\nu(z)$ is of bounded type, we can apply Krein's Theorem (see \cite{Kr} and \cite[Lemma 12]{HV}) together with \eqref{homog_eq1} and \eqref{Intro_homog_eq2} to conclude that $F \in \H(E_\nu)$ if and only if $F$ has exponential type at most $1$ and either side of \eqref{Intro_homog_eq2} is finite. Again, by Krein's Theorem, $F\in\H^p(E^2_\nu)$ if and only if $F(z)$ has exponential type at most $2$ and $F/E_{\nu}^2\in L^p(\R,\dx)$.

\smallskip

For $\nu>-1/2$, the {\it Hankel's integral} for $J_\nu(z)$ is given by
\begin{equation*}\label{Me-So-form}
J_\nu(z) = \frac{({z/2)}^{\nu}}{\Gamma(\nu+1/2)\sqrt{\pi}}\int_{-1}^{1}e^{is z}(1-s^2)^{\nu-\tfrac{1}{2}}\, \ds.
\end{equation*}
This formula can be found in \cite[Section 93]{Bow}.
Using (\ref{A-id}) - \eqref{B-id} and an integration by parts, we deduce the following integral representation
\begin{equation*}\label{E-nu-form}
E_\nu(z)=\frac{\Gamma(\nu+1)}{\Gamma(\nu+1/2)\sqrt{\pi}}\int_{-1}^{1}e^{is z}(1-s^2)^{\nu-\tfrac{1}{2}}(1-s)\,\ds.
\end{equation*}
By simple estimates, we deduce from the above representation that $v(E^*_\nu)=1$ for $\nu>-1/2$. Thus, we cannot directly apply Theorem \ref{gen-int-form-der} for homogeneous spaces in the case $p>2$. Nevertheless, we will prove Theorem \ref{gen-int-form-der} for these homogeneous spaces by verifying that the alternative condition \eqref{dropped-cond} holds.

\begin{lemma}\label{hom-spaces-lemma}
Let $\nu > -1$. The space $\H^p(E_\nu^2)$ satisfies the following properties:
\begin{enumerate}
\item $\H^p(E_\nu^2)\subset \H^q(E_\nu^2)$ if $0<p<q\leq \infty$.
\item $\H^p(E_\nu^2)$ is closed by differentiation for every $p\in[1,\infty]$.
\item If $p\in[1,\infty)$ there exists a constant $C_{\nu,p}>0$ such that
\begin{equation*}
\sum_{t\in\Tau_\nu(\alpha)}\frac{|F(t)|+|F'(t)|}{(1+|t|)K_{2,\nu}(t,t)^{1/2}} \leq C_{\nu,p}\|F\|_{E_\nu^2,p} \,\,\,\, \mbox{ for every } \,\,\, F\in\H^p(E^2_\nu),
\end{equation*}
where the function $K_{2,\nu}(w,z)$ denotes the reproducing kernel of $\H(E_\nu^2)$ and $\Tau_\nu(\alpha)=\{t\in\R:\varphi_\nu(t)\equiv \alpha \, (\!\!\!\!\mod \pi)\}$.
\end{enumerate}
\end{lemma}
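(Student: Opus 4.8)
The plan is to verify the three properties of the homogeneous space $\H^p(E_\nu^2)$ by leveraging the explicit asymptotics already collected in this section, especially the estimates \eqref{homog_eq1}, \eqref{AB-est}, \eqref{phi'-bound}, and the fact that $E_\nu(z)$ has exponential type exactly $1$. The unifying idea is that $|E_\nu(x)|^{-2}\simeq_\nu |x|^{2\nu+1}$ is a genuine polynomial weight, so the de Branges space $\H^p(E_\nu^2)$ is, up to this weight, just a Paley--Wiener space $\PW(2,p)$, and the desired statements all follow from classical facts about entire functions of exponential type combined with the control on $\p_\nu'$.

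For item (1), I would use Krein's theorem (already invoked in the text) to identify $\H^p(E_\nu^2)$ with the set of entire $F$ of exponential type at most $2$ with $F/E_\nu^2\in L^p(\R,\dx)$, i.e. with $\int_\R|F(x)|^p|x|^{(2\nu+1)p/2}\dx<\infty$. A standard Plancherel--P\'olya / Nikolskii-type inequality then gives, for any $0<p<q\le\infty$, a pointwise bound $|F(x)|^q \lesssim_{p,q} \int_{|y-x|\le 1}|F(y)|^p\dy$ after absorbing the slowly-varying weight $|x|^{2\nu+1}$ (which changes by a bounded factor over a unit interval once $|x|\ge 1$); integrating against $|x|^{(2\nu+1)q/2}$ and using $q\ge p$ yields the inclusion. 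For small $|x|$ there is no weight issue. For item (2), since $[E_\nu'(z)/E_\nu(z)]\in H^\infty(\U)$ was already established in \eqref{E'/E-bound}, one applies \cite[Theorem A]{Bar2}, which states precisely that $E'/E\in H^\infty(\U)$ forces $\H^p(E)$ (here with $E$ replaced by $E_\nu^2$, noting $(E_\nu^2)'/E_\nu^2 = 2E_\nu'/E_\nu\in H^\infty(\U)$) to be closed by differentiation for every $p\in[1,\infty]$; alternatively one can bound $F'$ directly via Bernstein's inequality in the polynomial-weighted $L^p$ space.

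For item (3) — which I expect to be the main obstacle, since it is the hypothesis needed to run Theorem \ref{gen-int-form-der} in the range $p>2$ — I would argue as follows. By \eqref{phi'-bound} the set $\Tau_\nu(\alpha)$ is uniformly separated and moreover $\#\{t\in\Tau_\nu(\alpha): t\in[N,N+1]\}$ is bounded, so it behaves like a translate of $\tfrac{\pi}{2}\Z$. Using \eqref{reprod-kern} together with $K_{2,\nu}(t,t)=2|E_\nu(t)|^2 K_\nu(t,t)$ and $\p_\nu'(t)=\pi K_\nu(t,t)/|E_\nu(t)|^2\simeq_\nu 1$, one gets $K_{2,\nu}(t,t)\simeq_\nu |E_\nu(t)|^4\simeq_\nu (1+|t|)^{-2(2\nu+1)}$. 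Hence the left side of the claimed inequality is comparable to $\sum_{t\in\Tau_\nu(\alpha)} (1+|t|)^{2\nu}\,\{|F(t)|+|F'(t)|\}$. Now item (2) reduces the $|F'(t)|$ term to an $|F(t)|$-type term (applied to $F'$, which lies in the same space with comparable norm), so it suffices to show $\sum_{t\in\Tau_\nu(\alpha)}(1+|t|)^{2\nu}|F(t)| \lesssim_{\nu,p} \|F\|_{E_\nu^2,p}$. This is a sampling/Plancherel--P\'olya inequality for entire functions of exponential type $2$: by the same Nikolskii bound used in item (1), $(1+|t|)^{2\nu}|F(t)|$ is dominated by $\big(\int_{|x-t|\le 1}|F(x)|^p(1+|x|)^{(2\nu+1)p/2}\dx\big)^{1/p}\cdot (1+|t|)^{2\nu-(2\nu+1)/2}$; summing over the separated set $\Tau_\nu(\alpha)$, using that the unit intervals have bounded overlap and that $2\nu-(2\nu+1)/2 = \nu-\tfrac12 < 0$ ... wait — this last exponent is negative only when $\nu<\tfrac12$, so for $\nu\ge\tfrac12$ one instead keeps the extra weight inside the integral (it is already $(1+|x|)^{(2\nu+1)p/2}$, with total exponent after summation matching the $L^p$ norm) and applies H\"older in $\ell^p$–$\ell^{p'}$ against the summable sequence $(1+|t|)^{-2}$ coming from separation; the point is that the exponent $2\nu+1>0$ in the weight is exactly what makes $|E_\nu(t)|^{-2}$ large enough to beat the polynomial growth factor $(1+|t|)^{2\nu}$ after one factor of $(1+|t|)^{-2}$ is spent on separation and the rest is absorbed by the $L^p$ norm. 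The delicate bookkeeping is precisely matching these exponents across the H\"older split; I would handle it by writing everything in terms of the weight $w(x)=|x|^{(2\nu+1)/2}$, so $\|F\|_{E_\nu^2,p}\simeq_\nu \|Fw^2\|_{L^p}$ for $|x|$ large, and checking that $(1+|t|)^{2\nu} = w(t)^{(4\nu)/(2\nu+1)}$ with $(4\nu)/(2\nu+1) < 2+1 = 3$... the cleanest route is simply to reduce to the case $\nu=-1/2$ (ordinary Paley--Wiener) by the substitution that removes the weight and then invoke the classical Plancherel--P\'olya inequality, handling the weight perturbatively.
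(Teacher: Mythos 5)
Your overall strategy (identify $\H^p(E_\nu^2)$ as a polynomially weighted Paley--Wiener space via Krein, use the separation of $\Tau_\nu(\alpha)$ coming from $\p_\nu'\simeq 1$, and finish with Plancherel--P\'olya/Nikolskii-type estimates) is the same as the paper's, and items (1) and (2) are essentially sound; for (2) the appeal to Baranov's theorem via $E_\nu'/E_\nu\in H^\infty(\U)$ is a legitimate shortcut, where the paper instead gives a self-contained argument writing $F'\Psi=(F\Psi)'-F\Psi'$ with an auxiliary entire function $\Psi$ satisfying $|\Psi'|\lesssim|\Psi|$.

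Item (3), however, does not close as written, for two concrete reasons. First, the weight is miscomputed: $\|F\|_{E_\nu^2,p}^p=\int|F|^p|E_\nu|^{-2p}\simeq\int|F|^p(1+|x|)^{(2\nu+1)p}\,\dx$, with exponent $(2\nu+1)p$, not $(2\nu+1)p/2$ as you wrote (your own later identity $\|F\|_{E_\nu^2,p}\simeq\|Fw^2\|_{L^p}$ with $w=|x|^{(2\nu+1)/2}$ contradicts it). This factor-of-two error is exactly what produces the spurious exponent $\nu-\tfrac12$ and the case split on $\nu\gtrless\tfrac12$; with the correct exponent there is no case distinction, since $(1+|t|)^{2\nu}|F(t)|=(1+|t|)^{-1}\cdot(1+|t|)^{2\nu+1}|F(t)|$ and one finishes by H\"older in $\ell^{p}$--$\ell^{p'}$ against the summable sequence $(1+|t|)^{-1}$ over the separated set, plus a weighted Plancherel--P\'olya bound $\sum_t (1+|t|)^{(2\nu+1)p}|F(t)|^p\lesssim\int|F|^p(1+|x|)^{(2\nu+1)p}\,\dx$ --- which is precisely the paper's route. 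Second, your fallback of ``reducing to $\nu=-1/2$ by the substitution that removes the weight'' is not a well-defined step: no change of variables turns the weight $|x|^{2\nu+1}$ off while preserving exponential type. The paper's substitute is the nontrivial construction of an entire function $\Psi$ of exponential type with $|\Psi(x)|\simeq_\nu|x|^{2\nu+1}$ for $|x|\ge 1$, built as $E_\sigma^2$ or $E_{-3/4}^{4k}E_\sigma^2$ for suitable $\sigma$ and $k$, so that the classical Plancherel--P\'olya theorem can be applied to $F\Psi\in\PW(3,p)$. Either supply that construction (or prove the weighted Plancherel--P\'olya inequality directly from the local estimate $|F(t)|^p\lesssim\int_{|x-t|\le1}|F|^p$ and the slow variation of the weight over unit intervals), and fix the exponent; as it stands the key inequality of item (3) is not established.
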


\begin{proof} First we prove item (1). Define an auxiliary function $\Psi(z)$ in the following way. If $2\nu+1 < 1$ write $\Psi(z)=E_\sigma(z)^2$ where 
$2\nu+1=-(2\sigma+1)$. If $2\nu+1 \geq 1$, let $k\geq 1$ be a positive integer such that $k\leq 2\nu+1<k+1$ and define 
$\Psi(z)=E_{-3/4}(z)^{4k}E_{\sigma}(z)^2$ where $2\sigma+1=(k-2\nu-1)$. We conclude that $\Psi(z)$ is of exponential type and, by (\ref{homog_eq1}), $|\Psi(x)|\simeq_\nu |x|^{2\nu+1}$ for $|x|\geq 1$. By (\ref{E'/E-bound}) and some simple calculations we have $|\Psi'(t)|\<|\Psi(t)|$ for all real $t$. Also, by redefining $\tilde \Psi(z)=\Psi(az)$ for some $a>0$, we can suppose that $\Psi(z)$ has exponential type 1.

\smallskip

We conclude that $F\in\H^p(E_\nu^2)$ if and only if $F(z)$ is of exponential type at most $2$ and $F\Psi\in L^p(\R,\dx)$. Thus, $\Psi(z)\H^p(E_\nu^2)\subset \PW(3,p)$, where $\PW(3,p)$ is the Paley-Wiener space defined in Subsection \ref{deBranges}. The Plancherel-P\'{o}lya Theorem (see \cite{PP}) implies that $\PW(a,p)\subset \PW(a,q)$ for every $a>0$ and $0<p<q\leq \infty$. We conclude that $F\Psi\in \PW(3,q)$ for every $F\in\H^p(E_\nu)$. This proves item (1).

\smallskip

Now we prove item (2). If $F\in\H^p(E_\nu^2)$ does not have zeros then, since it is of exponential type at most $2$, we deduce that $F(z)=ae^{bz}$ for some $a,b\in\C$ with $|b|\leq 2$. Then $F'=bF$ and trivially $F'\in\H^p(E_\nu^2)$. If $F(z)$ has a zero $z=w$ then $G(z)=F(z)/(z-w)$ is of exponential type at most $2$ and  $G\in L^p(\R,\dx)$. By the Plancherel-P\'{o}lya Theorem, $G'\in L^p(\R,\dx)$ and has exponential type at most $2$. Hence $F'(z)$ has exponential type at most $2$. On the other hand, $F\Psi\in L^p(\R,\dx)$ and again this implies that $(F\Psi)'\in L^p(\R,\dx)$. Since
$F'\Psi=(F\Psi)'-F\Psi'$ and $|\Psi'(t)|\<|\Psi(t)|$ for all real $t$, we conclude that $F'\Psi \in L^p(\R,\dx)$. Hence $F'\in\H^p(E_\nu^2)$.

\smallskip

Finally we prove item (3). By item (2) it is sufficient to prove that
$$
\sum_{t\in\Tau_\nu(\alpha)}\frac{|F(t)|}{(1+|t|)K_{2,\nu}(t,t)^{1/2}} \<_{p,\nu} \|F\|_{E_\nu^2,p}, \,\,\,\, \mbox{ for every } \,\,\, F\in\H^p(E^2_\nu).
$$
By (\ref{phi'-bound}) we  conclude that $K_{2,\nu}(t,t)^{1/2}\simeq |E_\nu(t)|^2$ for all real $t$ and $\Tau_\nu(\alpha)$ is separated with width of separation depending only on $\nu$. We can use H\"older's inequality to conclude that
$$
\sum_{t\in\Tau_\nu(\alpha)}\frac{|F(t)|}{(1+|t|)K_{2,\nu}(t,t)^{1/2}}\<_{p,\nu} \bigg(\sum_{t\in\Tau_\nu(\alpha)}\bigg|\frac{F(t)}{E_\nu(t)^{2}}\bigg|^p\bigg)^{1/p}.
$$
Hence, we only need to show that
\begin{equation}\label{pp-ineq}
\sum_{t\in\Tau_\nu(\alpha)}\bigg|\frac{F(t)}{E_\nu(t)^{2}}\bigg|^p \<_{p,\nu} \int_\R \bigg|\frac{F(t)}{E_\nu(t)^2}\bigg|^p\dt
\end{equation}
for all $F\in\H^p(E^2_\nu)$. Since $\Psi F\in L^p(\R,\dx)$ and $\Tau_\nu(\alpha)$ is separated, we can apply the Plancherel-P\'olya Theorem to obtain 
$$
\sum_{t\in\Tau_\nu(\alpha)}|F(t)\Psi(t)|^p \<_{p,\nu} \int_\R |F(t)\Psi(t)|^p\,\dt
$$
for every $F\in\H^p(E^2_\nu)$. This implies (\ref{pp-ineq}) and concludes the lemma.
\end{proof}

\noindent{\it Remark:} The proof of item (2) is inspired in the proof of \cite[Theorem 20]{CL3}.

\smallskip

From Lemma \ref{hom-spaces-lemma} and condition \eqref{dropped-cond} we conclude the validity of the interpolation formula  (\ref{gen-int-form}) for these homogeneous spaces of entire functions, summarized in the next theorem (with $E(z)=E_\nu(z)$ for $\alpha=0$ and $\alpha=-\pi/2$). Due to identities (\ref{A-id}) - (\ref{B-id}), this can also be seen as an independent contribution to the theory of Bessel functions.

\begin{theorem}\label{int-form-hom}
Let $p\in(0,\infty)$  and $\nu>-1$. Let $F(z)$ be an entire function of exponential type at most $2$ such that
$$
\int_{|t|\geq 1}\big|F(t)|t|^{2\nu+1}\big|^p\,\dt <\infty.
$$
Then
\begin{equation*}\label{int-form-gen-Anu}
\frac{F(z)}{A_\nu(z)^2}=\sum_{A_\nu(s)=0}\bigg\{\frac{F(s)}{A_\nu'(s)^2(z-s)^2}+\frac{F'(s)}{A_\nu'(s)^2(z-s)}\bigg\}+(2\nu+1)\sum_{A_\nu(s)=0} \frac{F(s)}{sA'_\nu(s)^2(z-s)}
\end{equation*}
and
\begin{equation*}\label{int-form-gen-Bnu}
\frac{F(z)}{B_\nu(z)^2}=\sum_{B_\nu(t)=0}\bigg\{\frac{F(t)}{B_\nu'(t)^2(z-t)^2}+\frac{F'(t)}{B_\nu'(t)^2(z-t)}\bigg\} + (2\nu+1)\sum_{\stackrel{B_\nu(t)=0}{t\neq 0}} \frac{F(t)}{tB'_\nu(t)^2(z-t)}\,,
\end{equation*}
where these series converge uniformly on compact sets of $\C$ away from their respective singularities. 
\end{theorem}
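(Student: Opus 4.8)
The plan is to obtain Theorem~\ref{int-form-hom} as a direct corollary of Theorem~\ref{gen-int-form-der} applied to the Hermite--Biehler function $E(z)=E_\nu(z)$, with the two choices $\alpha=0$ and $\alpha=-\pi/2$: in the first case $B_0(z)=B_\nu(z)$ and $\Tau_\nu(0)$ is the real zero set of $B_\nu$, while in the second $B_{-\pi/2}(z)=A_\nu(z)$ and $\Tau_\nu(-\pi/2)$ is the real zero set of $A_\nu$ (these two values of $\alpha$ agree modulo $\pi/2$, in accordance with the remark following Theorem~\ref{gen-int-form-der}). First I would record, exactly as in the discussion of \S\ref{hom-space} via Krein's theorem and \eqref{homog_eq1}, that the hypothesis imposed on $F$ in Theorem~\ref{int-form-hom}---entire of exponential type at most $2$ with $\int_{|t|\ge1}\big|F(t)|t|^{2\nu+1}\big|^p\,\dt<\infty$---is exactly the statement that $F\in\H^p(E_\nu^2)$. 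For $p\in(0,1)$, Lemma~\ref{hom-spaces-lemma}(1) gives $\H^p(E_\nu^2)\subset\H^1(E_\nu^2)$, so it suffices to treat $p\in[1,\infty)$.

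Next I would verify the hypotheses of Theorem~\ref{gen-int-form-der} for $E_\nu$ and $\alpha\in\{0,-\pi/2\}$. That $\H(E_\nu^2)$ is closed by differentiation is Lemma~\ref{hom-spaces-lemma}(2) with $p=2$. The condition $A_\alpha B_\alpha\notin\H(E_\nu^2)$ holds since $A_0B_0=A_\nu B_\nu$ and $A_{-\pi/2}B_{-\pi/2}=-A_\nu B_\nu$, and by \eqref{homog_eq1} and \eqref{AB-est} the quotient $|A_\nu B_\nu/E_\nu^2|$ is comparable to $|\sin(2x-\nu\pi)|$ for large $|x|$, hence not square--integrable. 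Finally \eqref{phi'-bound} gives $\p_\nu'(x)\simeq_\nu1$ for all real $x$, so $\p_\nu'$ is bounded away from zero on $\Tau_\nu(\alpha)$. This already yields \eqref{gen-int-form} for $p\in[1,2]$. For $p\in(2,\infty)$ the extra hypothesis $v(E_\nu^*/E_\nu)<0$ of Theorem~\ref{gen-int-form-der} can fail (it does for $\nu>-1/2$, where $v(E_\nu^*)=1$); however, by the remark following the proof of Theorem~\ref{gen-int-form-der} that hypothesis may be replaced by condition \eqref{dropped-cond}, and \eqref{dropped-cond} is precisely Lemma~\ref{hom-spaces-lemma}(3). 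Thus \eqref{gen-int-form} holds for every $F\in\H^p(E_\nu^2)$, $p\in[1,\infty)$, for both values of $\alpha$.

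It then remains to put \eqref{gen-int-form} into the stated shape. Dividing the $\alpha=0$ instance by $B_\nu(z)^2$ gives
\[
\frac{F(z)}{B_\nu(z)^2}=\sum_{B_\nu(t)=0}\bigg\{\frac{F(t)}{B_\nu'(t)^2(z-t)^2}+\frac{F'(t)}{B_\nu'(t)^2(z-t)}-\frac{F(t)B_\nu''(t)}{B_\nu'(t)^3(z-t)}\bigg\},
\]
and differentiating the second identity of \eqref{nu-diff-eqn} and using $A_\nu'=-B_\nu$ one finds $B_\nu''(z)=-B_\nu(z)-(2\nu+1)B_\nu'(z)/z+(2\nu+1)B_\nu(z)/z^2$, so $B_\nu''(t)/B_\nu'(t)=-(2\nu+1)/t$ at every nonzero zero $t$ of $B_\nu$; since $B_\nu$ is odd by \eqref{B-id}, $B_\nu''(0)=0$, so the zero at the origin contributes only its first two terms. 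Substituting produces the second displayed formula of the theorem. Likewise $A_\nu''(s)=-B_\nu'(s)=-A_\nu(s)+(2\nu+1)B_\nu(s)/s$, hence $A_\nu''(s)/A_\nu'(s)=-(2\nu+1)/s$ at every zero $s$ of $A_\nu$ (all nonzero, since $A_\nu(0)=1$), and dividing the $\alpha=-\pi/2$ instance by $A_\nu(z)^2$ yields the first displayed formula. Uniform convergence on compact subsets of $\C$ away from the zeros of $B_\nu$ (resp. $A_\nu$) is inherited from the uniform--on--compacts convergence of \eqref{gen-int-form} in Theorem~\ref{gen-int-form-der}, while the legitimacy of separating the single series into the two series written in the statement follows from absolute convergence, using Lemma~\ref{hom-spaces-lemma}(3) together with $K_{2,\nu}(t,t)^{1/2}\simeq_\nu|E_\nu(t)|^2\simeq_\nu B_\nu'(t)^2$ (resp. $A_\nu'(s)^2$) at the interpolation nodes.

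I expect the only genuinely delicate point to be this last piece of bookkeeping: confirming that after dividing by $B_\nu(z)^2$ the term carrying the factor $(2\nu+1)/t$ splits off as a separately (and absolutely) convergent series, and that the simple zero of $B_\nu$ at the origin creates no singularity there. Everything else is a mechanical appeal to Lemma~\ref{hom-spaces-lemma}, Theorem~\ref{gen-int-form-der}, condition \eqref{dropped-cond}, and the Bessel identities \eqref{A-id}--\eqref{B-id} and \eqref{nu-diff-eqn}.
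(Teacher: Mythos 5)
Your proposal is correct and follows essentially the same route as the paper: the paper likewise obtains Theorem \ref{int-form-hom} by applying Theorem \ref{gen-int-form-der} to $E_\nu$ with $\alpha=0$ and $\alpha=-\pi/2$, using Lemma \ref{hom-spaces-lemma} together with condition \eqref{dropped-cond} in place of $v(E_\nu^*/E_\nu)<0$ for $p>2$, and the differential relations \eqref{nu-diff-eqn} to convert $B_\nu''(t)/B_\nu'(t)$ and $A_\nu''(s)/A_\nu'(s)$ into the $(2\nu+1)/t$ terms. Your write-up simply makes explicit the bookkeeping (the case $p<1$ via the inclusion $\H^p\subset\H^1$, the behaviour at $t=0$, and the absolute convergence needed to split the series) that the paper leaves implicit.
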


\subsection{Extremal Functions}\label{extfunc}
The purpose of this subsection is to prove a uniqueness result for some extremal problems described below. Let $d$ denote the dimension. A set $K\subset\R^d$ is called a convex body if it is compact, convex, symmetric around the origin and has the origin as an interior point. Let $|\cdot|$ denote the Euclidean norm in $\R^d$ and $\B$ the compact Euclidean unit ball. Given a non-negative Borel measure $\mu$ on $\R^d$ and a real-valued function $g(x)$ we denote by $P^+(g,K,\mu)$ the set of measurable real-valued functions $M(x)$ defined on $\R^d$ satisfying the following conditions:

\begin{enumerate}
\item $M(x)$ defines a tempered distribution such that its distributional Fourier transform $\ft M$ is supported on $K$.
\item $g(x)\leq M(x)$ for all $x\in\R^d$.
\item $M-g\in L^1(\R^d,\mu)$.
\end{enumerate}
In this case, we say that $M(x)$ is a band-limited majorant of $g(x)$. In an analogous way we define $P^-(g,K,\mu)$ as the set of minorants.
We are asked to minimize the quantities
\begin{equation}\label{mu-min}
\int_{\R^d} \big\{M(x)-g(x)\big\}\,\d\mu(x)\,\,\,\, \mbox{ and }\,\,\,\, \int_{\R^d} \big\{g(x)-L(x)\big\}\,\d\mu(x)
\end{equation}
among all functions $M\in P^+(g,K,\mu)$ and $L\in P^-(g,K,\mu)$. And, if the minimum is attained, characterize the set of extremal functions. We call $M(x)$ (or $L(x)$) an {\it extremal function} if it minimizes the quantity (\ref{mu-min}).\smallskip

The problem becomes treatable if we consider radial functions. For instance, we consider the situation where $K=\B$, the function $g(x)$ is radial, and
\begin{equation}\label{mu-E}
\d\mu_E(x)=2\bigg(|E(|x|)|^2|x|^{d-1}\big|S^{d-1}\big|\bigg)^{-1}\dx\,,
\end{equation}
where  $\big|S^{d-1}\big|$ denotes the area of the $(d-1)$-dimensional sphere. Also, in this subsection, $E(z)$ will always denote a Hermite-Biehler function of bounded type and mean type equal to $\pi$ such that $\H(E^2)$ is closed by differentiation and $\p'(t)$ is bounded away from zero over the zero set of $A(z)$ and $B(z)$. We also assume that $E^*(-z)=E(z)$ and $AB\notin\H(E^2)$. This implies that the companion functions $A(z)$ and $B(z)$ are respectively even and odd and $A,B\notin \H(E)$. By Krein's Theorem, $E(z)$ is of exponential type with $\tau(E)=v(E)=\pi$, and $F\in\H(E)$ if and only if $F(z)$ is of exponential type at most $\pi$ and $F/E\in L^2(\R,\dx)$ (see \cite[Lemmas 9 and 12]{HV}).

\smallskip

These restrictions reduce the multidimensional problem to a one-dimensional problem and allow us to use de Branges space techniques. Constructions of extremal band-limited approximations of radial functions in several variables were studied in \cite{CL3,CL4,HV}. In particular, E. Carneiro and F. Littmann \cite{CL3, CL4} were able to explicitly construct a pair of radial functions $M\in P^+(g,\B,\mu_E)$ and $L\in P^-(g,\B,\mu_E)$ that minimize the quantities in
(\ref{mu-min}), where $\mu_E$ is given by (\ref{mu-E}), $E(z)=E_\nu(z)$ and $g(x)$ belongs to a vast class of radial functions with exponential or Gaussian subordination.

\smallskip

For the sake of completeness we state here a classical theorem about tempered distributions with Fourier transform supported on a ball. This result can be found in \cite[Theorem 7.3.1]{Hor}.

\begin{theorem}[Paley--Wiener--Schwartz]\label{thm-PWS} 
Let $F$ be a tempered distribution such that the support of $\ft{F}$ is contained in $\B$. Then $F:\C^d\to\C$ is an entire function and there exist $N,C>0$ such that
$$
|F(x+iy)| \leq C(1+|x+iy|)^Ne^{2\pi|y|}
$$
for every  $x+iy\in\C^d$. 

Conversely, every entire function $F:\C^d\to\C$ satisfying an estimate of this form defines a tempered distribution with Fourier transform supported on $\B$.
 
\end{theorem}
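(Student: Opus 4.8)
The plan is to prove the two implications separately; both rest on Fourier inversion together with the structure of compactly supported distributions. For the direct implication, assume $F$ is a tempered distribution with $\supp\ft F\subseteq\B$. A compactly supported distribution has finite order, so there are $C_0>0$ and $N\in\N$ with $|\langle\ft F,\psi\rangle|\le C_0\sum_{|\alpha|\le N}\sup_{|\xi|\le 2}|\partial^\alpha\psi(\xi)|$ for every $\psi\in C^\infty_c(\R^d)$. I would then pick cutoffs $\chi_\varepsilon\in C^\infty_c(\R^d)$, $0<\varepsilon\le 1$, with $\chi_\varepsilon\equiv 1$ near $\B$, $\supp\chi_\varepsilon\subseteq\{|\xi|\le 1+\varepsilon\}$ and $|\partial^\alpha\chi_\varepsilon|\le C_\alpha\varepsilon^{-|\alpha|}$, and set $G(z):=\langle\ft F,\chi_\varepsilon(\xi)e^{2\pi i z\cdot\xi}\rangle$. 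Since $\ft F$ annihilates test functions vanishing near $\B$, $G(z)$ is independent of $\varepsilon$; since $z\mapsto\chi_\varepsilon(\xi)e^{2\pi i z\cdot\xi}$ is holomorphic as a $C^\infty_c(\R^d)$-valued map, $G$ is entire; and, because $\chi_\varepsilon\,\ft F=\ft F$, Fourier inversion in $\mathcal{S}'$ forces $G=F$ on $\R^d$. For the bound I apply the order estimate to $\psi=\chi_\varepsilon(\xi)e^{2\pi i z\cdot\xi}$: on $\supp\chi_\varepsilon$ one has $|e^{2\pi i z\cdot\xi}|\le e^{2\pi(1+\varepsilon)|\im z|}$, while the Leibniz rule contributes factors $\varepsilon^{-|\beta|}$ from $\chi_\varepsilon$ and $(1+|z|)^{|\alpha|-|\beta|}$ from the exponential; choosing $\varepsilon=(1+|z|)^{-1}$ collapses these into $(1+|z|)^{|\alpha|}\le(1+|z|)^{N}$, and since $|\im z|\le|z|=\varepsilon^{-1}-1$ we get $e^{2\pi\varepsilon|\im z|}\le e^{2\pi}$, so that $|F(z)|\le C'(1+|z|)^{N}e^{2\pi|\im z|}$.

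For the converse, let $F$ be entire with $|F(x+iy)|\le C(1+|x+iy|)^{N}e^{2\pi|y|}$. On $\R^d$ it is polynomially bounded, hence a tempered distribution, and the task is to show $\ft F$ vanishes on $\{|\xi|>1\}$. The idea is to mollify by Gaussians and shift contours. For $t>0$ put $g_t(x)=e^{-t|x|^2}$ and $G_t=F g_t$; then $G_t$ is a Schwartz function with $\ft{G_t}=\ft F*\ft{g_t}$, and since $\{\ft{g_t}\}_{t>0}$ is an approximate identity, $\ft{G_t}\to\ft F$ in $\mathcal{S}'$ as $t\to 0^+$. Moreover $z\mapsto F(z)e^{-t(z_1^2+\cdots+z_d^2)}$ is entire, and for fixed $y\in\R^d$ its modulus on $\R^d+iy$ is at most $C(1+|x+iy|)^{N}e^{2\pi|y|}e^{t|y|^2}e^{-t|x|^2}$, which decays like a Gaussian in $x$; hence Cauchy's theorem gives the contour shift
\[
\ft{G_t}(\xi)=\int_{\R^d}G_t(x+iy)\,e^{-2\pi i(x+iy)\cdot\xi}\,\dx\qquad(y\in\R^d).
\]
Taking absolute values and then choosing $y=-s\,\xi/|\xi|$ with $s=\pi(|\xi|-1)/t$ turns the exponential factor into $e^{-\pi^2(|\xi|-1)^2/t}$, so that on each annulus $\{1+\delta\le|\xi|\le R\}$ one gets $|\ft{G_t}(\xi)|\le C_{R,\delta}\,t^{-M}e^{-\pi^2\delta^2/t}\to0$ as $t\to0^+$. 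Since $\delta>0$ and $R$ are arbitrary, $\ft{G_t}\to0$ locally uniformly on $\{|\xi|>1\}$; comparing with $\ft{G_t}\to\ft F$ shows $\ft F=0$ there, i.e.\ $\supp\ft F\subseteq\B$.

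I expect the converse to be the main obstacle, precisely because the support must be pinned to $\B$ and not to a slightly larger ball. The Gaussian factor $e^{-t|x|^2}$ is what legitimises the contour shift, but it simultaneously smears the spectrum, so the decay $e^{-\pi^2\delta^2/t}$ has to be weighed against the polynomial blow-up $t^{-M}$ and against the growth in $s=|y|$ of the translated integral; arranging the order of limits (fix $y$ as a function of $t$ and $\xi$, then send $t\to0^+$) so that everything balances is the one genuinely delicate point. The direct implication is routine once the finite-order property is invoked, the only subtlety there being the scaling $\varepsilon\sim(1+|z|)^{-1}$ needed to reach the sharp exponential type $2\pi$ rather than $2\pi(1+\varepsilon)$.
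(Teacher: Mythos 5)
Your proof is correct. Note, however, that the paper does not prove this statement at all: it is quoted as a classical result with a pointer to H\"ormander \cite[Theorem 7.3.1]{Hor}, so there is no in-paper argument to compare against. Your direct implication is the standard one (finite order of the compactly supported distribution $\ft F$, pairing with $\chi_\varepsilon(\xi)e^{2\pi i z\cdot\xi}$, and the scaling $\varepsilon\sim(1+|z|)^{-1}$ to collapse the type from $2\pi(1+\varepsilon)$ to the sharp $2\pi$), and it matches H\"ormander's. For the converse, H\"ormander regularizes with compactly supported mollifiers and reduces to the $C^\infty_c$ case, which only pins $\supp \ft{(F*\phi_\varepsilon)}$ inside an $\varepsilon$-neighbourhood of $\B$ before letting $\varepsilon\to 0$; your Gaussian multiplier $e^{-t|x|^2}$ accomplishes the same localization by making the contour shift to $\R^d+iy$ legitimate for arbitrary $y$, and the optimization $s=\pi(|\xi|-1)/t$ cleanly beats the $t^{-M}$ loss on every annulus $\{1+\delta\le|\xi|\le R\}$. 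Both routes are standard; yours is self-contained and arguably more transparent about why the support lands exactly in $\B$ rather than in a slightly larger ball. All the analytic steps you flag as delicate (justifying the contour shift via the Gaussian decay uniform in bounded $\im z$, and the order of limits $t\to 0^+$ after fixing $y=y(t,\xi)$) do go through as you describe.
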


The next propositions give an interpolation condition for a band-limited majorant or minorant to be extremal and unique in radial case. We highlight the fact that the uniqueness part below is a novelty in this multidimensional theory, and makes a crucial use of our interpolation formulas. This enhances the extremal results obtained in \cite{CL3,CL4}.

\begin{proposition}\label{min-uniq}
Let $g(x)=g(|x|)$ be a radial function that is differentiable for $x\neq 0$. Suppose that $P^+(g,\B,\mu_E)\neq \emptyset$ and there exists a radial function $L\in P^-(g,\B,\mu_E)$ such that $L(x)=g(x)$ whenever $A(|x|)=0$. Then $L$ is extremal and unique among the set of entire functions  on $\C^d$ whose restriction to $\R^d$ is radial.
 
\end{proposition}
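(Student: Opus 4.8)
The plan is to reduce the $d$-dimensional extremal problem to a one-dimensional question inside $\H^1(E^2)$, construct a Gaussian-type quadrature for $\mu_E$ supported on the zeros of $A$, and then settle uniqueness by feeding a function with double zeros at those points into the interpolation formula of Theorem~\ref{gen-int-form-der}. Since $\mu_E$ is rotation invariant and $g$ is radial, averaging a majorant (resp.\ minorant) over $SO(d)$ produces a radial majorant (resp.\ minorant) with the same value of $\int(M-g)\,\d\mu_E$ (resp.\ $\int(g-L)\,\d\mu_E$); hence it suffices to work with radial band-limited functions. By Theorem~\ref{thm-PWS}, the restriction of such a function to a complex line through the origin extends to an even entire function of exponential type at most $2\pi$ (recall $v(E)=\tau(E)=\pi$, so $v(E^2)=2\pi$), which we call its profile. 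For a radial $H(x)=h(|x|)$ with even profile $h$, formula (\ref{mu-E}) gives at once $\int_{\R^d}H\,\d\mu_E=\int_{\R}h(r)\,|E(r)|^{-2}\,\d r$. Fix once and for all a radial $M\in P^+(g,\B,\mu_E)$ with profile $m$; this is the only place the hypothesis $P^+(g,\B,\mu_E)\neq\emptyset$ enters.

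Next I establish the quadrature. Let $M'$ be any radial band-limited majorant and $L'$ any radial band-limited minorant with $M'-L'\in L^1(\mu_E)$, and let $F=m'-\ell'$ be the profile of $M'-L'$, an even entire function of exponential type at most $2\pi$ with $F\geq 0$ on $\R$ and $F/E^2\in L^1(\R)$. Since $E$ has bounded type, $\int(1+x^2)^{-1}\log^+F(x)\,\dx<\infty$, and a Fej\'er--Riesz/Krein factorization writes $F=GG^*$ with $G$ entire of exponential type at most $\pi$; as $G/E\in L^2(\R)$, Krein's theorem gives $G\in\H(E)$. Because $A=B_{-\pi/2}\notin\H(E)$, equality holds in (\ref{norm-basis}) for $\Tau(-\pi/2)=\{A(t)=0\}$, so
\[
\int_{\R^d}(M'-L')\,\d\mu_E=\|G\|_E^2=\sum_{A(t)=0}\frac{|G(t)|^2}{K(t,t)}=\sum_{A(t)=0}\frac{M'(t)-L'(t)}{K(t,t)},
\]
a quadrature formula with strictly positive weights $1/K(t,t)$; positivity holds because $K(t,t)>0$, as $E$ has no real zeros by Proposition~\ref{diff-lemma}(1).

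For extremality, apply the quadrature to $(M,L)$ and to $(M,\tilde L)$ for an arbitrary radial minorant $\tilde L\in P^-(g,\B,\mu_E)$ and subtract, using $L(t)=g(t)$ on $\{A(t)=0\}$:
\[
\int_{\R^d}(g-L)\,\d\mu_E-\int_{\R^d}(g-\tilde L)\,\d\mu_E=\sum_{A(t)=0}\frac{\tilde L(t)-g(t)}{K(t,t)}\leq 0,
\]
since $\tilde L\leq g$; a non-radial minorant is handled by averaging, so $L$ is extremal. If $\tilde L$ is also extremal (and radial), this inequality is an equality, so $\tilde L(t)=g(t)$ at every zero of $A$. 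Both $g-\ell$ and $g-\tilde\ell$ are nonnegative on $[0,\infty)$, differentiable on $(0,\infty)$, and vanish at each zero $t$ of $A$; since $A$ is even with simple zeros such $t$ are strictly positive, so these functions attain interior minima there, giving $\ell'(t)=g'(t)=\tilde\ell'(t)$. Hence $H:=\tilde\ell-\ell$ is an even function in $\H^1(E^2)$ (membership again from $H/E^2\in L^1(\R)$, the bounded type of $E$, and $v(E^2)=2\pi$) with $H(t)=H'(t)=0$ at every zero of $A$. Theorem~\ref{gen-int-form-der} with $p=1$ and $\alpha=-\pi/2$ (so $B_\alpha=A$) then makes every summand in (\ref{gen-int-form}) vanish, whence $H\equiv 0$; thus $\tilde\ell=\ell$, i.e.\ $\tilde L=L$ on $\R^d$ and therefore on all of $\C^d$.

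The main obstacle is the very first step: transferring the $d$-dimensional band-limited data to honest one-dimensional elements of the de Branges spaces. Concretely, one must check that the nonnegative radial profiles factor through $\H(E)$ — the interplay of the Fej\'er--Riesz/Krein factorization, the Cartwright (bounded-type) property of $E$, and the exponential-type bookkeeping — and that, although the interpolation formula (\ref{gen-int-form}) cannot be integrated term by term against $|E(x)|^{-2}\,\dx$, the Parseval identity underlying the equality case of (\ref{norm-basis}) nonetheless delivers the quadrature. Everything after that is the standard Gaussian-quadrature comparison.
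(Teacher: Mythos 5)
Your proof is correct and follows essentially the same route as the paper's: radial symmetrization over $SO(d)$, reduction to one-dimensional even profiles via Paley--Wiener--Schwartz, factorization of the nonnegative difference (majorant minus minorant) as $GG^*$ with $G\in\H(E)$, the equality case of (\ref{norm-basis}) at the zeros of $A$ to produce the quadrature, and Theorem \ref{gen-int-form-der} applied to a function with double zeros at those points for uniqueness. The only cosmetic difference is that you factor $M-L'$ in a single piece and subtract two quadrature identities, whereas the paper subtracts a common pivot from the two functions being compared and factors each difference separately.
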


\begin{proposition}\label{maj-uniq}
Let $g(x)=g(|x|)$ be a radial function that is differentiable for $x\neq 0$. Suppose that $P^-(g,\B,\mu_E)\neq \emptyset$ and there exists a radial function $M\in P^+(g,\B,\mu_E)$ such that $M(x)=g(x)$ whenever $B(|x|)=0$. Then $M$ is extremal and unique among the set of entire functions  on $\C^d$ whose restriction to $\R^d$ is radial.

\end{proposition}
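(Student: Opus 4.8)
The plan is to derive both propositions from a single Gaussian-type quadrature for $\H^1(E^2)$ obtained by integrating the interpolation formula \eqref{gen-int-form}; I describe the argument for Proposition \ref{maj-uniq}, the proof of Proposition \ref{min-uniq} being identical after replacing $\alpha=0$ (so that $B_\alpha=B$) by $\alpha=-\pi/2$ (so that $B_\alpha=A$). First I would reduce to one variable. Averaging a competitor over the orthogonal group preserves membership in $P^+(g,\B,\mu_E)$ and the value of \eqref{mu-min}, since $g$, $\B$ and $\mu_E$ are all rotation invariant; hence it suffices to treat radial competitors. By Theorem \ref{thm-PWS} every member of $P^+(g,\B,\mu_E)$ is entire, and a radial entire function with $\ft{F}$ supported in $\B$ equals $\mc F(|\cdot|)$ for an even entire function $\mc F$ of exponential type at most $2\pi=2\tau(E)$; passing to polar coordinates (the weight in \eqref{mu-E} exactly cancels the Jacobian $r^{d-1}$ and the factor $|S^{d-1}|$, and $\mc F$, $|E|$ are even) gives $\int_{\R^d}F\,\d\mu_E=\int_\R\mc F(x)|E(x)|^{-2}\dx$. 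By Krein's theorem the even entire functions of exponential type at most $2\pi$ with $\mc F/E^2\in L^1(\R)$ are precisely the elements of $\H^1(E^2)$.

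\emph{The quadrature.} The key step is to prove that, for every $F\in\H^1(E^2)$,
\[
\int_\R F(x)\,|E(x)|^{-2}\,\dx=\sum_{B(t)=0}\frac{F(t)}{K(t,t)},\qquad K(t,t)=\frac{A(t)B'(t)}{\pi}>0 .
\]
Since $\H(E^2)$ is closed by differentiation, $AB\notin\H(E^2)$, and $\p'$ is bounded away from zero over $\Tau(0)=\{B=0\}$, Theorem \ref{gen-int-form-der} applies with $\alpha=0$ and $p=1$, giving
\[
F(z)=B(z)^2\sum_{B(t)=0}\left\{\frac{F(t)}{B'(t)^2(z-t)^2}+\frac{F'(t)B'(t)-F(t)B''(t)}{B'(t)^3(z-t)}\right\}.
\]
I would integrate this against $|E(x)|^{-2}\dx$ term by term; the interchange is legitimate because the partial sums are controlled in $L^1(|E|^{-2}\dx)$ by the Hilbert-type estimates underlying \eqref{vital-ineq}. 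The second-order integral equals $B'(t)^{-2}\|B(\cdot)/(\cdot-t)\|_E^2=K(t,t)^{-1}$ (use $B(z)/(z-t)=\pi K(t,z)/A(t)$ and \eqref{phi-ident}), while each first-order integral vanishes, i.e.\ $\int_\R B(x)^2(x-t)^{-1}|E(x)|^{-2}\,\dx=0$ --- the analogue of $\int_\R\sin^2(\pi x)\,x^{-1}\,\dx=0$. This last fact I would check by writing $B^2/|E|^2=-\tfrac14\bigl(E/E^*-2+E^*/E\bigr)$ on $\R$ and using that $E/E^*$, respectively $E^*/E$, is bounded in the lower, respectively upper, half-plane, so that the Cauchy-integral (Sokhotski--Plemelj) evaluations of $\int_\R(E/E^*)(x-t)^{-1}\dx$ and $\int_\R(E^*/E)(x-t)^{-1}\dx$ contribute $\mp\pi i\,E(t)/E^*(t)$ and $\pm\pi i\,E^*(t)/E(t)$, which cancel because $E(t)=E^*(t)=A(t)$ at a real zero of $B$.

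\emph{Extremality and uniqueness.} Let $M$ be as in the statement and $\tilde M$ a radial competitor with one-variable profile $\tilde{\mc M}$; set $G=\tilde{\mc M}-\mc M$. Then $G\in\H^1(E^2)$ (it is even, of exponential type at most $2\pi$, and $G/E^2\in L^1(\R)$, being the difference of the profiles of $\tilde M-g$ and $M-g$), and at every zero $t$ of $B$ one has $G(t)=\tilde{\mc M}(t)-g(t)\ge 0$ because $\mc M(t)=g(t)$. The quadrature then gives
\[
\int_{\R^d}(\tilde M-g)\,\d\mu_E-\int_{\R^d}(M-g)\,\d\mu_E=\int_\R G(x)\,|E(x)|^{-2}\,\dx=\sum_{B(t)=0}\frac{G(t)}{K(t,t)}\ge 0 ,
\]
so $M$ is extremal. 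If $\tilde M$ is also extremal this sum is zero, hence $G(t)=0$ for every zero $t$ of $B$; for $t\ne 0$, the tangency inequalities $\tilde{\mc M}\ge g$, $\mc M\ge g$ together with $\tilde{\mc M}(t)=\mc M(t)=g(t)$ and the differentiability of $g$ away from the origin force $\tilde{\mc M}'(t)=g'(t)=\mc M'(t)$, so $G'(t)=0$; and $G'(0)=0$ by evenness. Substituting $G(t)=G'(t)=0$ into the interpolation formula \eqref{gen-int-form} with $\alpha=0$ makes its right-hand side vanish identically, so $G\equiv 0$, i.e.\ $\tilde M=M$.

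The main obstacle is the quadrature of the second paragraph: controlling the partial sums of \eqref{gen-int-form} in $L^1(|E|^{-2}\dx)$ so that term-by-term integration is valid --- precisely the role the Hilbert-type inequalities of \cite{CLV} play in Section \ref{L2-form} --- and pinning down the vanishing of $\int_\R B(x)^2(x-t)^{-1}|E(x)|^{-2}\,\dx$, for which the Hardy-space splitting above is the cleanest route. The one-variable bookkeeping (tracking the exponential type $2\pi$, absorbing the Jacobian into $\mu_E$, and checking that $O(d)$-averaging preserves the majorant constraint) is routine, and the concluding step of the uniqueness argument is exactly where the interpolation formula is used in an essential way.
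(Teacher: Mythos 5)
Your uniqueness argument is essentially the paper's: interpolate $G$ and $G'$ at the zeros of $B$, observe tangency forces $G=G'=0$ there, and invoke Theorem \ref{gen-int-form-der} for $p=1$ to get $G\equiv 0$. The extremality half, however, rests on a step that you have not actually established: the quadrature formula $\int_\R F(x)|E(x)|^{-2}\dx=\sum_{B(t)=0}F(t)/K(t,t)$ for all $F\in\H^1(E^2)$. Your proposed derivation --- integrate \eqref{gen-int-form} term by term against $|E(x)|^{-2}\dx$ --- has a genuine gap in the interchange of sum and integral. The Hilbert-type estimate \eqref{vital-ineq} controls the partial sums only in the $\H(E^2)$ norm, i.e.\ in $L^2(\R,|E(x)|^{-4}\dx)$, and the paper's proof of Theorem \ref{gen-int-form-der} for $p\in[1,2]$ likewise only yields convergence in $\H(E^2)$ and uniformly on compact sets; neither mode of convergence controls $\int_\R(\cdot)\,|E(x)|^{-2}\dx$, since $|E(x)|^{-2}\dx$ is in general an infinite measure and no $L^1(|E|^{-2}\dx)$ bound on the partial sums is available. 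Worse, the individual first-order integrals $\int_\R B(x)^2(x-t)^{-1}|E(x)|^{-2}\dx$ are only conditionally convergent ($B^2/|E|^2$ is bounded but does not decay), so even assigning them the value $0$ via your Sokhotski--Plemelj splitting requires a principal-value interpretation and decay hypotheses on $E^*/E$ that are not among the standing assumptions of \S\ref{extfunc}. A quadrature of this type is true in this setting (it is essentially a result of \cite{CL3}), but it is a nontrivial theorem that your argument assumes rather than proves.

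The paper circumvents all of this, and this is where your route genuinely diverges. It fixes an auxiliary minorant $L\in P^-(g,\B,\mu_E)$ --- this is exactly why the hypothesis $P^-(g,\B,\mu_E)\neq\emptyset$ appears, and your proof never uses it, which should have been a warning sign --- and writes the nonnegative profiles $m-l=pp^*$ and $r-l=qq^*$ with $p,q\in\H(E)$ via the factorization of \cite[Theorem 13]{B2}. Then $\int_\R\{r-m\}|E|^{-2}\dt=\|p\|_E^2-\|q\|_E^2$ is evaluated by the $L^2$ Parseval identity \eqref{norm-basis} (with equality, since $B\notin\H(E)$), which converts the integral into $\sum_{B(t)=0}\{r(t)-m(t)\}/K(t,t)\geq 0$ with no limit interchange and no conditionally convergent integrals. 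In effect the paper obtains the quadrature only for differences of squared moduli of $\H(E)$ functions, which is all that is needed, at the cost of the extra hypothesis on $P^-$. To repair your proof you would either need to adopt this factorization device or supply an independent proof of the $\H^1(E^2)$ quadrature formula.
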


We only prove Proposition \ref{maj-uniq} since the other is analogous.
\begin{proof}
{\it Optimality.}
\smallskip

Fix $L\in P^-(g,\B,\d\mu_E)$. Let $SO(d)$ denote the compact topological group of real orthogonal $d \times d$ matrices with determinant $1$, with associated probability Haar measure $\sigma$. If $R\in P^+(g,\B,\mu_E)$, then
$$
\tilde R(x)=\int_{SO(d)}R(\rho x)\,\d\sigma(\rho)
$$
is radial, belongs to $P^+(g,\B,\mu_E)$ and
\begin{equation}\label{maj-id-1}
\int_{\R^d} \big\{\tilde R(x)-M(x)\big\}\, \d\mu_E(x)=\int_{\R^d} \big\{R(x)-M(x)\big\}\, \d\mu_E(x).
\end{equation}
In the same way, we define $\tilde L(x)$ as the radial symmetrization of $L(x)$. Again we have $\tilde L\in P^-(g,\B,\d\mu_E)$.
Define $m(t)=M(te_1)$,  $l(t)=\tilde L(te_1)$ and $r(t)=\tilde R(te_1)$ for all real $t$, where $e_1=(1,0,...,0)$. We can apply the Paley-Wiener-Schwartz Theorem to conclude that these functions extend to $\C$ as entire functions of exponential type at most $2\pi$. By (\ref{mu-E}) we obtain that
\begin{equation}\label{maj-id-2}
\int_{\R^d} \big\{\tilde R(x)-M(x)\big\}\d\mu_E(x) = \int_{\R} \{r(t)-m(t)\}/|E(t)|^2\,\dt.
\end{equation}
We claim that $r-m=pp^*-qq^*$ for $p,q\in\H(E)$. Since $m(x)-l(x)\geq 0$ and $r(x)-l(x)\geq 0$ for all real $x$, we conclude that there exists two entire functions $p(z)$ and $q(z)$ of exponential type at most $\pi$ such that $m(z)-l(z)=p(z)p^*(z)$ and $r(z)-l(z)=q(z)q^*(z)$ (see \cite[Theorem 13]{B2}). Since $m-l$ and $r-l$ belong to $L^1(\R,|E(x)|^{-2}\dx)$ we conclude that $p,q\in\H(E)$. We can apply formula (\ref{norm-basis}) to obtain that

\begin{align}\label{maj-id-3}
\begin{split}
\int_{\R} \{r(t)-m(t)\}|E(t)|^{-2}\dt & = \int_{\R} \frac{|p(t)|^2-|q(t)|^2}{|E(t)|^2}\,\dt =\sum_{B(t)=0}\frac{|p(t)|^2-|q(t)|^2}{K(t,t)}\\ & = \sum_{B(t)=0}\frac{r(t)-m(t)}{K(t,t)} =\sum_{B(t)=0}\frac{r(t)-g(|t|)}{K(t,t)}\\
&  \geq 0\,,
\end{split}
\end{align}
where the last equality is due to the interpolation condition of $M(x)$, that is, $M(x)=g(x)$ whenever $B(|x|)=0$. By (\ref{maj-id-1}), (\ref{maj-id-2}) and (\ref{maj-id-3}) we conclude that $M(x)$ is extremal.
\smallskip

\noindent {\it Uniqueness.}
\smallskip

Inequality (\ref{maj-id-3}) implies that if $R\in P^+(g,\B,\mu_E)$ is radial and extremal, then $r(t)=g(|t|)$ whenever $B(t)=0$. Since $x\in\R^d\mapsto g(x)=g(|x|)$ is radial and differentiable for $x\neq 0$ we conclude that $r'(t)=\sgn(t)g'(|t|)$ if $B(t)=0$ and $t\neq 0$. Also $r'(0)=0$. Since $f:=(m-r)\in\H^1(E^2)$ and $f(t)=f'(t)=0$ whenever $B(t)=0$, by Theorem \ref{gen-int-form-der} we conclude that $f\equiv 0$. Hence, $M(x)$ is unique.
\end{proof}

\noindent {\it Remark:} In some cases $g(x)$ may have a singularity at $x=0$, for instance if $\lim _{x\to 0} g(x)=\infty$. Thus, only the minorant problem is well-posed, that is $P^+(g,\B,\mu_E)=\emptyset$. However, in the case of homogeneous spaces the previous proposition will still hold. In \cite[Corollary 23]{CL3}, E. Carneiro and F. Littmann proved that every $f\in\H^1(E^2_\nu)$, not necessarily non-negative on the real axis, can be represented as $f=pp^*-qq^*$ for $p,q\in\H(E_\nu)$. We can easily see that this representation is sufficient to prove the previous propositions for $E(z)=E_\nu(z)$ in the case when $g(x)$ has a singularity.

\medskip

\section*{Acknowledgements}
I am deeply grateful to my advisor Emanuel Carneiro for encouraging me to work on this problem and for all the fruitful discussions on the elaboration of this paper.

The author also acknowledges the support from CNPq--Brazil and FAPERJ--Brazil.

\medskip

\end{document}